\numberwithin{equation}{section}
\theoremstyle{plain}
\newtheorem{thm}{Theorem}[section]
\newtheorem{prop}[thm]{Proposition}
\newtheorem{defi}[thm]{Definition}
\newtheorem{lem}[thm]{Lemma}
\newtheorem{cor}[thm]{Corollary}
\newtheorem{eg}[thm]{Example}
\theoremstyle{remark}
\newtheorem{rema}[thm]{Remark}
\title[The c-function expansion of a basic hypergeometric function]
{The c-function expansion of a basic hypergeometric function
associated to root systems}
\author{J.V. Stokman}
\address{J.V. Stokman, Korteweg-de Vries 
Institute for Mathematics, University of Amsterdam,
Science Park 904, 1098 XH Amsterdam, The Netherlands \&
IMAPP, Radboud University Nijmegen, Heyendaalseweg 135, 6525 AJ Nijmegen, The Netherlands}
\email{j.v.stokman@uva.nl}
\subjclass[2000]{33D45, 33D52, 33D80}
\begin{document}
\keywords{Basic hypergeometric functions, 
basic Harish-Chandra series, c-functions}
\begin{abstract}
We derive an explicit $c$-function expansion of a basic hypergeometric
function associated to root systems. The
basic hypergeometric function in question was constructed
as explicit series expansion in symmetric
Macdonald polynomials by Cherednik in case
the associated twisted affine root system is reduced. Its 
construction was extended to the nonreduced case by the author.
It is a meromorphic Weyl group invariant 
solution of the spectral problem of the Macdonald
$q$-difference operators. The $c$-function expansion 
is its explicit expansion in
terms of the basis of the space of meromorphic solutions 
of the spectral problem
consisting of $q$-analogs of the Harish-Chandra series. 
We express the expansion coefficients in terms of
a $q$-analog of the Harish-Chandra $c$-function, which is explicitly
given as product of $q$-Gamma functions. The
$c$-function expansion shows that the basic hypergeometric function 
formally is a $q$-analog of the Heckman-Opdam hypergeometric function,
which in turn specializes to elementary
spherical functions on noncompact Riemannian symmetric spaces for
special values of the parameters. 
\end{abstract}
\maketitle
\setcounter{tocdepth}{1}
\tableofcontents
\section{Introduction}
In this paper we establish the $c$-function expansion 
of a basic hypergeometric function $\mathcal{E}_+$ associated to 
root systems. Besides the base $q$, 
the basic hypergeometric function $\mathcal{E}_+$
depends on
a choice of a multiplicity function $k$
on an affine root system naturally associated to the underlying
based root system data. 
It will become apparent from the $c$-function expansion
that $\mathcal{E}_+$ formally is a $q$-analog of 
the Heckman-Opdam \cite{HO,HS,O} hypergeometric function, which in turn
reduces to the elementary
spherical functions on noncompact Riemannian symmetric spaces
for special parameter values.
We distinguish three important subclasses of the theory:
the reduced case, the $\textup{GL}_m$ case
and the nonreduced case.

In the reduced case $\mathcal{E}_+$ 
is Cherednik's global spherical function from \cite{CM,CWhit,CO},
or a reductive extension thereof. It
is a Weyl group invariant, meromorphic, selfdual common eigenfunction of the
Macdonald $q$-difference operators, constructed as an explicit
convergent series in symmetric Macdonald polynomials.
In the rank one case $\mathcal{E}_+$ can be explicitly related to the
basic hypergeometric series solutions of Heine's basic hypergeometric 
$q$-difference equation 
(see Subsection \ref{GLsection}). 

The $\textup{GL}_m$ case is a special case of the reduced case with
the underlying root system of type $\textup{A}_{m-1}$. 
It is of special interest since it
relates to Ruijsenaars' \cite{R} relativistic quantum trigonometric
Calogero-Moser model. In fact, the associated
Macdonald $q$-difference operators were first written
down by Ruijsenaars \cite{R} as
the corresponding quantum Hamiltonians.
 
In the nonreduced case the associated affine root system 
is the nonreduced affine root system of type $C^\vee C_n$. 
The multiplicity function $k$ now comprises 
five degrees of freedoms (four if the rank $n$
is equal to one). 
The associated basic hypergeometric function
$\mathcal{E}_+$ was constructed in \cite{St2}. 
Duality of $\mathcal{E}_+$ involves now a nontrivial transformation
of the multiplicity $k$ to a dual multiplicity function $k^d$
(we use the convention that the dual multiplicity function $k^d$
equals $k$ in the reduced case).
The associated Macdonald $q$-difference operators
include Koornwinder's \cite{Ko} multivariable extension of the 
Askey-Wilson \cite{AW}
second-order $q$-difference operator. 
It is the nonreduced case which is expected to be amenable to
generalizations to the elliptic level, cf. \cite{Ra}.

The basic Harish-Chandra series $\widehat{\Phi}_\eta(\cdot,\gamma)$ 
with base point given by a torus element $\eta$ is a meromorphic
common eigenfunction of the Macdonald $q$-difference operators having
a converging series expansion of the form
\[
\widehat{\Phi}_\eta(t,\gamma)=\widehat{\mathcal{W}}_\eta(t,\gamma)
\sum_{\mu\in Q_+}
\Gamma_\mu(\gamma)t^{-\mu},\qquad \Gamma_0(\gamma)=1
\]
deep in the appropriate asymptotic sector,
where $Q_+$ consists of 
the elements in the root lattice that can be written as
sum of positive roots. 
The prefactor 
$\widehat{\mathcal{W}}_\eta(t,\gamma)$ is an explicit 
quotient of theta functions
satisfying the asymptotic Macdonald $q$-difference equations
(see Subsection \ref{bHCs}). It is normalized such that it 
reduces to the natural choice \eqref{good}
of the prefactor when restricting $t$ to the $q$-lattice containing
$\eta\gamma_{0,d}$, where
$\gamma_{0,d}^{-1}$ (respectively $\gamma_0^{-1}$) denotes the torus element
associated to the $k^d$-deformation (respectively $k$-deformation)
of the half sum of positive roots, cf. \eqref{gamma0}. 
For the construction of the basic Harish-Chandra
series $\widehat{\Phi}_\eta$ we follow closely \cite{vMSt,vM}.

Let $W_0$ be the Weyl group of the underlying finite root system.
The space of common meromorphic eigenfunctions of the 
Macdonald $q$-difference operators 
has, for generic $\gamma$,
the $W_0$-translates $\widehat{\Phi}_\eta(\cdot,w\gamma)$
($w\in W_0$) as a linear basis over the field of quasiconstants
(this follows from combining and extending 
\cite[Cor. 5.14]{vMSt}, \cite[Rem. 5.13]{vM} 
and \cite[Thm. 5.16]{StKZ}). Hence, for generic $\gamma$,
we have
\begin{equation}\label{cfe}
\mathcal{E}_+(t,\gamma)=\widehat{c}_\eta(\gamma_0)^{-1}
\sum_{w\in W_0}\widehat{c}_\eta(w\gamma)
\widehat{\Phi}_\eta(t,w\gamma)
\end{equation}
for a unique coefficient $\widehat{c}_\eta(\gamma)$, which turns out to be
independent of $t$ due to the particular choice
$\widehat{\mathcal{W}}_\eta$ of the prefactor.
We will call \eqref{cfe} the (monic form of) the $c$-function expansion of
$\mathcal{E}_+$. We will prove
the following explicit expression 
\begin{equation}\label{cintro}
\widehat{c}_\eta(\gamma)=\frac{\vartheta((w_0\eta)^{-1}\xi\gamma)}
{\vartheta(\xi\gamma)}c_{k^d,q}(\gamma)
\end{equation}
for the expansion coefficient,
where $w_0\in W_0$ is the longest Weyl group element, 
$\vartheta(\cdot)$ is the theta-function \eqref{vartheta}
associated to the given root system data, $\xi$ is an explicit
torus element depending on the multiplicity function $k$
(see Corollary \ref{monicc} for the explicit expression of $\xi$; 
in the reduced case it is the unit element $1$ of the complex torus)
and $c_{k^d,q}(\gamma)$ \eqref{expl2} is ``half'' 
of the inverse of the dual weight function of the associated symmetric 
Macdonald-Koornwinder polynomials. The expression of
$c_{k,q}(\cdot)$ as product of $q$-shifted factorials 
(equivalently, as product of $q$-Gamma functions) is given by 
\eqref{Vr} in the reduced case and by \eqref{Vnr} in the nonreduced case. 
It is the $q$-analog 
of the Gindikin-Karpelevic \cite{GK} type product formula 
\cite[Def. 6.4]{HO}
of the Harish-Chandra $c$-function for
the Heckman-Opdam hypergeometric function. 

Note that for $\eta=1$ the theta function factors 
in the expression for $\widehat{c}_\eta(\gamma)$ cancel out.
The $c$-function expansion \eqref{cfe} thus simplifies to
\begin{equation}\label{cfe1}
\mathcal{E}_+(t,\gamma)=c_{k^d,q}(\gamma_0)^{-1}\sum_{w\in W_0}
c_{k^d,q}(w\gamma)\widehat{\Phi}_1(t,w\gamma).
\end{equation}
Comparing this formula for $t$ on the $q$-lattice containing $\gamma_{0,d}$
to the $c$-function expansion \cite[Part I, Def. 4.4.1]{HS}
of the Heckman-Opdam hypergeometric function,
it is apparent that $\mathcal{E}_+$ is formally a $q$-analog of
the Heckman-Opdam hypergeometric function.
The corresponding classical limit 
$q\rightarrow 1$ can be made rigorous if 
the underlying finite root system is of type $\textup{A}_1$,
see \cite{Klim}. In this paper we will not touch upon
making the limit rigorous in general, see 
\cite[Thm. 4.5]{CWhit} for further results in this direction.

It is important to consider the $c$-function expansion for
arbitrary $\eta$. In the rank one
nonreduced case, a selfdual
Fourier transform with Fourier kernel $\mathcal{E}_+$
and (Plancherel) density 
\[
\mu_\eta(\gamma)=\frac{1}{\widehat{c}_\eta(\gamma)\widehat{c}_\eta(\gamma^{-1})}
\]
was defined and studied in \cite{KSsu11,KS}. The  extra 
theta function contributions  in $\mu_\eta(\gamma)$
compared to the
usual weight function $\mu_1(\gamma)$ of the 
Macdonald-Koornwinder polynomials (which, in the present
nonreduced rank one setup,
are the Askey-Wilson \cite{AW} polynomials)
give rise to an infinite sequence of discrete mass points in the
associated (Plancherel) measure. In the interpretation as the inverse of
a spherical Fourier transform on the quantum
$\textup{SU}(1,1)$ group these mass points account for the
contributions of the strange series representations of the 
quantized universal enveloping algebra (see \cite{KSsu11}).

The basic hypergeometric function $\mathcal{E}_+(t,\gamma)$ is selfdual,
\[
\mathcal{E}_+(t,\gamma)=\mathcal{E}_{+,d}(\gamma^{-1},t^{-1}),
\]
where $\mathcal{E}_{+,d}$ is the basic hypergeometric function with
respect to the dual $k^d$ of the multiplicity function $k$.
This implies that $\mathcal{E}_+(t,\gamma)$ solves a 
bispectral problem,
in which dual Macdonald $q$-difference equations acting on $\gamma$
are added to the original Macdonald $q$-difference equations acting on $t$.
We show that a suitable, explicit renormalization $\Phi(\cdot,\cdot)
=\Phi(\cdot,\cdot;k,q)$ of the 
basic Harish-Chandra series $\widehat{\Phi}_\eta$ also becomes a selfdual
solution of the bispectral problem. We will derive 
the $c$-function expansion \eqref{cfe} as a consequence
of the more refined asymptotic expansion of $\mathcal{E}_+$,
\begin{equation}\label{cfesd}
\mathcal{E}_+(t,\gamma)=\sum_{w\in W_0}\mathfrak{c}(t,w\gamma)\Phi(t,w\gamma),
\end{equation}
where $\mathfrak{c}(t,\gamma)$ now is an explicit 
meromorphic function, quasiconstant
in both $t$ and $\gamma$. 

To prove the existence of an expansion of the form \eqref{cfesd}
we make essential use of Cherednik's \cite{C} double affine Hecke algebra
and of the bispectral quantum 
Knizhnik-Zamolodchikov (KZ) equations from \cite{vMSt,vM}.
We show that $\mathcal{E}_+$ is the Hecke algebra 
symmetrization of a nonsymmetric
analog $\mathcal{E}$ of the basic hypergeometric function, whose
fundamental property is an explicit transformation rule relating the
action of the double affine Hecke algebra 
on the first torus variable to the action of the double
affine Hecke algebra on the second torus variable
(this goes back to 
\cite{CM} in the reduced case and \cite{St2} in the nonreduced case). 
The Hecke algebra symmetrizer acting on such functions
factorizes as 
$\phi\circ\psi$ with $\psi$ mapping into
the space $\mathcal{K}^{W_0\times W_0}$
of Weyl group invariant meromorphic solutions of the bispectral
quantum KZ equations. The map $\phi$ is the difference
Cherednik-Matsuo map from
\cite{CKZ2}. This implies that 
the basic hypergeometric function $\mathcal{E}_+$
is the image under $\phi$ of the 
Weyl group invariant meromorphic solution $\psi(\mathcal{E})\in
\mathcal{K}^{W_0\times W_0}$
of the bispectral quantum KZ equations. This observation is essential
because it allows us to use 
the asymptotic analysis of the bispectral quantum KZ equations
from \cite{vMSt,vM}.
It implies that the space $\mathcal{K}$ 
of meromorphic solutions of the bispectral quantum KZ equations
has a basis over the field of quasiconstants defined in terms
of $W_0$-translates of
a selfdual asymptotically free solution $F$. The image of $F$ under
the difference Cherednik-Matsuo map $\phi$ is the selfdual basic
Harish-Chandra series $\Phi$ in \eqref{cfesd}.

In Theorem \ref{mainresult} we give an explicit expression of
the quasiconstant coefficient $\mathfrak{c}(t,\gamma)$ in the expansion
\eqref{cfesd} as product of theta functions. The higher rank theta
function $\vartheta(\cdot)$ \eqref{vartheta} and Jacobi's one-variable
theta function \eqref{Jacobi} are both involved.
The coefficient $\mathfrak{c}(t,\gamma)$ splits in two factors,
the first factor is an explicit product of higher rank theta functions, 
the second factor is $c_{k^d,q}(\gamma)
\mathcal{S}_{k^d,q}(\gamma)/
\mathcal{L}_q(\gamma)$ with
$\mathcal{L}_q(\gamma)$
the leading term of the asymptotic series of $\Phi(\cdot,\gamma)$
and $\mathcal{S}_{k^d,q}(\gamma)$ the holomorphic function
capturing the singularities of
$\Phi(t,\gamma)$ in $\gamma$ (see Theorem \ref{properties} and
Definition \ref{bHCdef}). The appearance of the higher rank theta functions
and of $c_{k^d,q}(\gamma)$ is due to the asymptotics of a suitable 
renormalization of the basic hypergeometric function 
$\mathcal{E}_+(\cdot,\gamma)$, see Corollary \ref{decompcor} and 
Proposition \ref{poll} (in the reduced case the asymptotics of the basic
hypergeometric function was considered by Cherednik \cite[\S 4.2]{CWhit}).
Similarly to $c_{k^d,q}(\gamma)$, the factor
$\mathcal{S}_{k^d,q}(\gamma)/\mathcal{L}_q(\gamma)$ can be explicitly
expressed as product of $q$-Gamma functions. By the Jacobi triple
product identity their product $c_{k^d,q}(\gamma)
\mathcal{S}_{k^d,q}(\gamma)/\mathcal{L}_q(\gamma)$ admits an expression as
product of Jacobi theta functions.

Recently \cite{StqHC}
explicit connection coefficient formulas for the selfdual basic
Harish-Chandra series $\Phi$ are derived.
They do not lead to a new proof of the $c$-function expansion though, 
see \cite[\S 1.5]{StqHC} for a detailed discussion.

As an application of the $c$-function expansion we
establish pointwise asymptotics of the Macdonald-Koornwinder polynomials
in Subsection \ref{abcd} 
(the $L^2$-asymptotics was obtained by different methods in 
\cite{RuL, vD1, vD2}). In addition we relate and compare
in Subsections \ref{specialnr} and  \ref{GLsection}
our results to the classical theory of basic hypergeometric series \cite{GR}
when the rank of the underlying root system is one.

\section{The basic hypergeometric function}
In this section we give the definition of the basic hypergeometric function
associated to root systems. It was introduced by Cherednik in \cite{CM} 
for irreducible reduced twisted affine root systems. In \cite{St2} 
it was defined for
the nonreduced case (sometimes called the Koornwinder case, 
or $C^\vee C$ case). We give a uniform treatment in which we allow
extra freedom in the choice of
the associated translation lattice. This enables us to
include the
$\textup{GL}_m$-extension of the reduced type $A$ case in our treatment.

\subsection{Affine root systems and extended affine Weyl groups}\label{start}
In this subsection we recall well known facts on affine root systems and
affine Weyl groups (for further details see, e.g., \cite{C,M}).
Let $V$ be an Euclidean space of dimension $m$
with scalar product $\bigl(\cdot,\cdot\bigr)$
and corresponding norm $|\cdot|$.
Let $R_0\subset V$ be a finite set of nonzero vectors and let $V_0$ be its
real span. We suppose that $R_0\subset V_0$ is 
a crystallographic, reduced irreducible
root system. We write $R_{0,s}$ (respectively $R_{0,l}$) for the
subset of $R_0$ of short (respectively long) roots. If all roots of
$R_0$ have the same root length then $R_{0,s}=R_0=R_{0,l}$ by convention.
Let $n=\textup{dim}(V_0)\leq m$ be
the rank of $R_0$.
Let $\Delta_0=
(\alpha_1,\ldots,\alpha_n)$ be an ordered basis of $R_0$ and
$R_0=R_0^+\cup R_0^-$ the corresponding decomposition of $R_0$
in positive and negative roots. We order the basis elements in such
a way that $\alpha_n$ is a short root. 
We write $\varphi\in R_0^+$ (respectively
$\theta\in R_0^+$) for the corresponding 
highest root (respectively highest short
root). They coincide if $R_0$ has only one root length.
Let $Q=\bigoplus_{i=1}^n\mathbb{Z}\alpha_i$ be the root lattice and 
set $Q_+=\bigoplus_{i=1}^n\mathbb{Z}_{\geq 0}\alpha_i$.

View $\widehat{V}=V\oplus\mathbb{R}c$ as the space of
real valued affine linear functions on $V$ by
\[
v+rc: v^\prime\mapsto
(v,v^\prime)+r\qquad  (v,v^\prime\in V, r\in\mathbb{R}).
\]
We extend the scalar product $\bigl(\cdot,\cdot\bigr)$
to a semi-positive definite
form on $\widehat{V}$ such that the constant functions $\mathbb{R}c$
are in the radical.
The canonical action of the 
affine linear group $\textup{GL}_{\mathbb{R}}(V)\ltimes V$ on $V$
gives rise to a linear action on $\widehat{V}$
by transposition. We denote the resulting translation
actions by $\tau$. Thus $\tau(v)v^\prime=v+v^\prime$ and
\[
\tau(v)(v^\prime+rc)=v^\prime+(r-(v,v^\prime))c.
\]
For $0\not=\alpha\in V$ and $r\in\mathbb{R}$ let $s_{\alpha+rc}$
be the orthogonal reflection in the affine hyperplane 
$\{v\in V\,\, | \,\, (\alpha,v)=-r\}$. Then $s_{\alpha+rc}\in
\textup{GL}_{\mathbb{R}}(V)\ltimes V$. In fact, 
$s_{\alpha+rc}=\tau(-r\alpha^\vee)s_\alpha$ with
$\alpha^\vee:=2\alpha/|\alpha|^2$. 

The twisted reduced affine root system $R^\bullet$ associated to $R_0$ is 
\[
R^\bullet:=\{\alpha+r\frac{|\alpha|^2}{2}c \,\, | \,\, \alpha\in R_0,\,\,
r\in\mathbb{Z} \}\subset \widehat{V}.
\]
The affine Weyl group $W^\bullet$ of $R^\bullet$ is 
the subgroup of $\textup{GL}_{\mathbb{R}}(V)\ltimes V$
generated by $s_a$ ($a\in R^\bullet$). 
It preserves $R^\bullet$. In addition, $W^\bullet\simeq W_0\ltimes Q$
with $W_0$ the Weyl group of $R_0$. 
We extend the ordered basis $\Delta_0$ of
$R_0$ to an ordered basis
\[
\Delta=(a_0,a_1,\ldots,a_n):=\bigl(\frac{|\theta|^2}{2}c-\theta,\alpha_1,
\ldots,\alpha_n)
\]
of $R^\bullet$. It results in the decomposition
$R^\bullet=R^{\bullet,+}\cup R^{\bullet,-}$ of $R^\bullet$ 
in positive and negative roots.
 
The Weyl group $W_0$ and the
affine Weyl group $W^\bullet$ are Coxeter groups, with Coxeter generators
the simple reflections $s_i:=s_{\alpha_i}$ ($1\leq i\leq n$) respectively
$s_0:=s_{a_0},s_1,\ldots,s_n$. 

Let $Q^\vee$ be the coroot lattice of $R_0$, i.e. it is
the integral span of the 
coroots $\alpha^\vee=2\alpha/|\alpha|^2$ ($\alpha\in R_0$).
Fix a full lattice $\Lambda\subset V$ satisfying
$Q\subseteq\Lambda$ and
$\bigl(\Lambda,Q^\vee\bigr)\subseteq \mathbb{Z}$. 
\begin{rema}\label{startRemark}
In this remark we relate the 
triples $(R_0,\Delta_0,\Lambda)$ to the notion of a
based root datum (cf., e.g., \cite[\S 1]{Sp} for a survey on root
data). Using the notations from \cite[\S 1]{Sp}, suppose that
$\Psi_0=(X,\Phi,\Delta,X^\vee,\Phi^\vee,\Delta^\vee)$ is a nontoral based
root datum with associated perfect pairing
$\langle \cdot,\cdot\rangle: X\times X^\vee\rightarrow\mathbb{Z}$
and associated bijection $\alpha\mapsto\alpha^\vee$ of $\Phi$
onto $\Phi^\vee$.
Assume that the root system $\Phi$ is reduced and irreducible. Choose
a Weyl group invariant scalar product $\bigl(\cdot,\cdot\bigr)$
on $V:=\mathbb{R}\otimes_{\mathbb{Z}}X$. 
Now we use the scalar product to embed $X^\vee$ as a
full lattice in $V$. Thus $\xi\in X^\vee$, regarded as element of 
$V$, is characterized by the requirement that
$(\xi,x)$ equals $\langle x,\xi\rangle$ for
all $x\in X$. The element $\alpha^\vee\in\Phi^\vee$ then corresponds
to the coroot $2\alpha/|\alpha|^2$ in $V$. 
It follows that the triple $(R_0,\Delta_0,\Lambda):=(\Phi,\Delta,X)$ in $V$ 
satisfies the desired properties. 
\end{rema}
Let 
\[P:=\{\lambda\in V_0 \,\, | \,\,
(\lambda,\alpha^\vee)\in\mathbb{Z}\quad\forall\,\alpha\in R_0\}
\]
be the weight lattice of $R_0$. Let $\widetilde{\varpi}_i\in P$
($1\leq i\leq n$) be the fundamental weights of $P$ with respect to
the ordered basis $\Delta_0$ of $R_0$. In other words,
$\widetilde{\varpi}_i\in V_0$ is characterized by 
$(\widetilde{\varpi}_i,\alpha_j^\vee)=\delta_{i,j}$ (Kronecker delta
function) for $1\leq i,j\leq n$.
Since $Q\subseteq P$ with finite index and $Q\subseteq\Lambda$, there
exists for each $i\in\{1,\ldots,n\}$ a smallest natural number $m_i$
such that $\varpi_i:=m_i\widetilde{\varpi}_i\in\Lambda$. Then
$\{\varpi_i\}_{i=1}^n$ is a basis of a $W_0$-invariant, rank $n$
sublattice of $\Lambda\cap V_0$ with the basis elements satisfying 
$(\varpi_i,\alpha_j^\vee)=0$ if
$j\not=i$ and $(\varpi_i,\alpha_i^\vee)=m_i\in\mathbb{Z}_{>0}$.
Set
\[
\Lambda_c:=\Lambda\cap V_0^\perp.
\]
Note that $L\Lambda\subseteq
\Lambda_c\oplus\bigoplus_{i=1}^n\mathbb{Z}\varpi_i$ for
$L:=m_1\cdots m_n$. In particular, 
$\Lambda_c$ is a full sublattice of $V_0^\perp$.

We list here the three key examples
of triples $(R_0,\Delta_0,\Lambda)$.
\begin{eg}\label{exampleLambda}
{\bf (i)} If $V_0=V$ then a natural choice for $\Lambda$ is the
weight lattice $P$. Then $\varpi_i=\widetilde{\varpi}_i$.\\
{\bf (ii)} Let $V=\mathbb{R}^m$ with orthonormal basis $\{\epsilon_i\}_{i=1}^m$.
Take $R_0=\{\epsilon_i-\epsilon_j\}_{1\leq i\not=j\leq m}$ the root system
of type $\textup{A}_{m-1}$ with ordered basis 
\[
\Delta_0=(\epsilon_1-
\epsilon_2,\epsilon_2-\epsilon_3,\ldots,\epsilon_{m-1}-\epsilon_m).
\]
Then $V_0\subset V$ is of codimension one and $n=m-1$. 
In this case we can take
$\Lambda=\bigoplus_{i=1}^m\mathbb{Z}\epsilon_i$.
The correponding elements $\varpi_i\in\Lambda$ ($1\leq i<m$) are
given by $\varpi_i=m_i\widetilde{\varpi}_i$ with
$m_i$ the smallest natural number such that $im_i\in m\mathbb{Z}$
and with
\[
\widetilde{\varpi}_i=\epsilon_1+\cdots+\epsilon_i-
\frac{i}{m}(\epsilon_1+\cdots+\epsilon_m).
\]
The lattice $\Lambda_c$ is generated by $\epsilon_1+\cdots+\epsilon_m$.\\
{\bf (iii)} Let $R_0\subset V_0=V=\mathbb{R}^n$ be the root system of type
$\textup{A}_1$ if $n=1$ and of type $\textup{B}_n$ if $n\geq 2$. 
In this case we can take $\Lambda=Q$. The corresponding elements
$\varpi_i$ ($1\leq i\leq n$) are given by
$\varpi_i=\widetilde{\varpi}_i$
($1\leq i<n$) and $\varpi_n=2\widetilde{\varpi}_n$. They form a
basis of $\Lambda$.
\end{eg}

We call $W:=W_0\ltimes\Lambda$  
the extended affine Weyl group associated to the triple 
$(R_0,\Delta_0,\Lambda)$. It preserves $R^\bullet$ and it 
contains the affine Weyl group $W^\bullet$ as
normal subgroup. 

The length function on $W$ is defined by
\[
l(w)=\#\bigl(R^{\bullet,+}\cap w^{-1}R^{\bullet,-}\bigr),\qquad w\in W.
\]
Let $\Omega\subset W$ be the subgroup
\[
\Omega=\{ w\in W \,\, | \,\, l(w)=0\}.
\]
It normalizes $W^\bullet$, and $W\simeq \Omega\ltimes W^\bullet$. In particular,
$\Omega\simeq W/W^\bullet\simeq\Lambda/Q$ as abelian groups.

The action of $\Omega$ on $R^\bullet$ restricts to an action on the
unordered basis $\{a_0,\ldots,a_n\}$ of $R^\bullet$. We also view it as 
action on the indexing set $\{0,\ldots,n\}$ of the basis, so that
$ws_iw^{-1}=s_{w(i)}$ for $w\in\Omega$ and $0\leq i\leq n$. By the same
formula, $\Omega$ acts on the affine braid group $\mathcal{B}$
associated to the Coxeter system $(W^\bullet,(s_0,\ldots,s_n))$ by group
automorphisms.

The elements of the group $\Omega$ can alternatively be described as follows.
For $\lambda\in\Lambda$ write $u(\lambda)\in W$ for the
unique element in the coset $W_0\tau(\lambda)\subset W$ of minimal length.
We have $u(\lambda)=\tau(\lambda)$ if $\lambda\in\Lambda^-$,
where 
\[
\Lambda^{\pm}:=\{\lambda\in\Lambda \,\, | \,\, \pm(\lambda,\alpha^\vee)
\geq 0\quad \forall\,\alpha\in R_0^+\}
\]
is the set of dominant and antidominant weights in $\Lambda$ respectively.
Let 
\[
\Lambda_{min}^+:=\{\lambda\in\Lambda \,\, | \,\, 0\leq
(\lambda,\alpha^\vee)\leq 1\quad \forall\,\alpha\in R_0^+\}
\]
be the set of miniscule dominant weights in $\Lambda$.
Then $\Omega=\{u(\lambda) \,|\, \lambda\in\Lambda_{min}^+\}$.

Note that $(\Lambda,a^\vee)=\mathbb{Z}$
or $=2\mathbb{Z}$ for $a\in R^\bullet$, where $a^\vee=2a/(a,a)$.
Define a subset $S=S(R_0,\Delta_0,\Lambda)$ of the index set $\{0,\ldots,n\}$
of the simple affine roots by 
\[
S:=\{i\in\{0,\ldots,n\}\,\, | \,\, (\Lambda,a_i^\vee)=2\mathbb{Z}\}.
\]
Case by case verification shows that
$S=\emptyset$ or $\#S=2$. If $\#S=2$ and $n=1$ then $R_0$ is of type
$\textup{A}_1$. If $\#S=2$ and $n\geq 2$ then $R_0$ is of type
$\textup{B}_n$ and $S=\{0,n\}$ (recall that $\alpha_n$ is short).
We call $S=\emptyset$ the {\it reduced case}
and $\#S=2$ the {\it nonreduced case}. Thus the $\textup{GL}_m$ case 
(corresponding to Example \ref{exampleLambda}{\bf (ii)}) will
be regarded as a special case of the reduced case.

We define a $\Lambda$-dependent extension of the reduced
irreducible affine root system $R^\bullet$ as follows.
\begin{defi}
The irreducible affine root system $R=R(R_0,\Delta_0,\Lambda)\subset\widehat{V}$
is defined by
\[
R:=R^\bullet\cup\bigcup_{j\in S}W^\bullet(2a_j).
\]
\end{defi}

In the reduced case we simply have $R=R^\bullet$.
In this case the $W$-orbits of $R$ are in one to one
correspondence with the $W_0$-orbits of $R_0$. Concretely, the affine root
$\alpha+r\frac{|\alpha|^2}{2}c\in R$ lies in the same $W$-orbit
as $\beta+r^\prime\frac{|\beta|^2}{2}c\in R$ iff $\alpha\in W_0\beta$.

In the nonreduced case 
we have
\[
R=R^\bullet\cup W(2a_0)\cup W(2a_n).
\] 
It is the nonreduced irreducible affine root system of type 
$C^\vee C_n$, cf. \cite{M0}.
The basis 
$\Delta$ of $R^\bullet$ is also a basis of $R$ and $W$ is still
the associated affine Weyl group.
Note that $R$ now has five $W$-orbits
\[
W(a_0), W(2a_0), W(\varphi), W(\theta), W(2\theta).
\]
In the nonreduced case $\Lambda_{min}^+=\Lambda_c$ and  
$\Lambda=Q\oplus\Lambda_c$, hence $W=\Lambda_c\times W^\bullet$. 
The reductive extension $\Lambda_c$ of the root lattice
will always play a trivial role in the nonreduced case.
To simplify the presentation we will therefore assume in the remainder
of the paper that $V_0=V$, in particular
$\Lambda=Q$, $\Lambda_c=\{0\}$ and $\Omega=\{1\}$, if we are dealing
with the nonreduced case.

\subsection{The double affine Hecke algebra}
References for this subsection are \cite{C,M,StBook}.
We call a function $k: R\rightarrow\mathbb{C}^*$,
denoted by $a\mapsto k_a$, a multiplicity function if $k_{wa}=k_a$
for $w\in W$ and $a\in R$. We will assume throughout the paper that
\begin{equation}\label{condition1}
0<k_a<1\quad \forall a\in R.
\end{equation}
We write $k^\bullet$ for its restriction to
$R^\bullet$ and $k_i:=k_{a_i}^\bullet$ for $0\leq i\leq n$. We
set $k_{2a}:=k_a$ if $a\in R$ and $2a\not\in R$.
\begin{defi}
{\bf (i)} The affine Hecke algebra $H^\bullet(k^\bullet)=
H^\bullet(R_0,\Delta_0;k^\bullet)$ is the 
unique associative unital algebra over $\mathbb{C}$
with generators $T_0,\ldots,T_n$
satisfying the affine braid relations of $\mathcal{B}$
and satisfying the quadratic relations
\[(T_i-k_i)(T_i+k_i^{-1})=0,\qquad 0\leq i\leq n.
\]
{\bf (ii)} The extended affine Hecke algebra $H(k^\bullet)=H(R_0,\Delta_0;
k^\bullet)$ is the crossed product algebra
$\Omega\ltimes H^\bullet(k^\bullet)$, where $\Omega$ acts by algebra automorphisms
on $H^\bullet(k^\bullet)$ by $w(T_i)=T_{w(i)}$ for $w\in\Omega$ and 
$0\leq i\leq n$.
\end{defi}
Recall that the lattice $\Lambda$ is $W_0$-stable, hence $W_0$ acts
on the complex algebraic torus $T=\textup{Hom}(\Lambda,\mathbb{C}^*)$
by transposition. Writing $t^\lambda$ for the value of $t\in T$ at
$\lambda\in\Lambda$, we thus have $(w^{-1}t)^\lambda=t^{w\lambda}$
for $w\in W_0$.

Fix $0<q<1$.
The $W_0$-action
on $T$ extends to a $q$-dependent left $W$-action $(w,t)\mapsto w_qt$
on $T$ by
\[
\tau(\lambda)_qt:=q^\lambda t,
\]  
where $q^\lambda\in T$ is defined by $\nu\mapsto q^{(\lambda,\nu)}$.

Let $\mathbb{C}[T]$ be the space of regular functions on $T$ with 
$\mathbb{C}$-basis
the monomials $t\mapsto t^\lambda$ ($\lambda\in\Lambda$). 
Let $\mathbb{C}(T)$ be the corresponding quotient field. Let
$\mathcal{M}(T)$ be the field of meromorphic functions on $T$. By transposition
the $q$-dependent $W$-action on $T$ gives an action by
field automorphisms on both $\mathbb{C}(T)$ and $\mathcal{M}(T)$. This action
will also be denoted by $(w,p)\mapsto w_qp$.
Let $\mathbb{C}(T)\rtimes_qW\subset\mathcal{M}(T)\rtimes_qW$
be the corresponding crossed product algebras. They canonically act
on $\mathcal{M}(T)$ by $q$-difference reflection operators.

Write for $t\in T$ and $a=\alpha+r\frac{|\alpha|^2}{2}c\in R^\bullet$,
\[
t_q^a:=q_\alpha^rt^\alpha,
\]
where $q_\alpha:=q^{\frac{|\alpha|^2}{2}}$.
Define for $a\in R^\bullet$ the rational function $c_a=c_a(\cdot;k,q)
\in\mathbb{C}(T)$ by
\begin{equation}\label{ca}
c_a(t):=\frac{(1-k_ak_{2a}t_q^a)(1+k_ak_{2a}^{-1}t_q^a)}{(1-t_q^{a})(1+t_q^a)}.
\end{equation}
It satisfies $c_a(w_q^{-1}t)=c_{wa}(t)$
for $a\in R^\bullet$ and $w\in W$. The following fundamental result 
is due to Cherednik in the reduced case
(see \cite[Thm. 3.2.1]{C} and references therein)
and due to Noumi \cite{N} in the nonreduced case.
\begin{thm}
There exists a unique faithful algebra homomorphism
\[
\pi=\pi_{k,q}: H(k^\bullet)\rightarrow \mathbb{C}(T)\rtimes_qW
\]
satisfying
\begin{equation*}
\begin{split}
\pi_{k,q}(T_i)&=k_i+k_i^{-1}c_{a_i}(s_{i,q}-1),\qquad 0\leq i\leq n,\\
\pi_{k,q}(w)&=w_q,\qquad\qquad\qquad\qquad\quad w\in\Omega.
\end{split}
\end{equation*}
\end{thm}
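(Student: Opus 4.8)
The plan is to define the candidate operators, verify the three families of defining relations of $H(k^\bullet)$, and then deduce faithfulness from a triangularity argument. Uniqueness is immediate, since $T_0,\dots,T_n$ together with $\Omega$ generate $H(k^\bullet)$, so prescribing their images determines $\pi$ completely. For existence I would set $\mathcal{T}_i:=k_i+k_i^{-1}c_{a_i}(s_{i,q}-1)\in\mathbb{C}(T)\rtimes_qW$ and $\pi(w):=w_q$ for $w\in\Omega$, and check three things: (a) the quadratic relations $(\mathcal{T}_i-k_i)(\mathcal{T}_i+k_i^{-1})=0$; (b) the affine braid relations of $\mathcal{B}$ for the $\mathcal{T}_i$; and (c) the crossed relations $w_q\mathcal{T}_iw_q^{-1}=\mathcal{T}_{w(i)}$ for $w\in\Omega$.

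For (a), abbreviate $c=c_{a_i}$ and $s=s_{i,q}$. Since $s$ is an involution and, by the cocycle identity $c_a(w_q^{-1}t)=c_{wa}(t)$ together with $s_ia_i=-a_i$, one has $s\,c=c_{-a_i}\,s$ as operators, a short rearrangement based on $(s-1)c(s-1)=(c_{a_i}+c_{-a_i})(1-s)$ gives
\[
(\mathcal{T}_i-k_i)(\mathcal{T}_i+k_i^{-1})
=c_{a_i}\bigl[(1+k_i^{-2})-k_i^{-2}(c_{a_i}+c_{-a_i})\bigr](s_{i,q}-1).
\]
Hence (a) is equivalent to the single rational-function identity $c_{a_i}+c_{-a_i}=1+k_i^2$. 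This is verified directly from the explicit expression \eqref{ca}: over the common denominator $1-(t_q^{a_i})^2$ the numerator of $c_{a_i}+c_{-a_i}$ simplifies to $(1+k_i^2)(1-(t_q^{a_i})^2)$, which yields the claim.

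Relation (c) is formal: as $\Omega$ permutes the simple affine roots with $wa_i=a_{w(i)}$, the cocycle identity gives $w_qc_{a_i}w_q^{-1}=c_{a_{w(i)}}$ and $w_qs_{i,q}w_q^{-1}=s_{w(i),q}$, while $W$-invariance of $k$ gives $k_i=k_{w(i)}$; substituting shows $w_q\mathcal{T}_iw_q^{-1}=\mathcal{T}_{w(i)}$. The braid relations (b) carry the real weight. I would reduce them to the standard rank-two parabolic subsystems, where $s_is_j$ has order $2$, $3$, $4$ or $6$, together with the additional doubled-root configurations occurring in the nonreduced $C^\vee C_n$ case. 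In each configuration the relation becomes a finite identity in $\mathbb{C}(T)$, obtained by repeatedly commuting the multiplication operators $c_a$ through the reflections via the cocycle rule and then reducing to identities among the $c_a$ on the rank-two subsystem. These case-by-case verifications, in particular the $C^\vee C_2$ and $G_2$ computations, are where I expect the main obstacle to lie.

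For faithfulness I would invoke the PBW basis $\{T_w\}_{w\in W}$ of $H(k^\bullet)$, standard for extended affine Hecke algebras (cf. \cite{C,M}). Writing $w=\omega\, s_{i_1}\cdots s_{i_\ell}$ with $\omega\in\Omega$ and $\ell=\ell(s_{i_1}\cdots s_{i_\ell})$, and expanding $\pi(T_w)=\omega_q\,\pi(T_{i_1})\cdots\pi(T_{i_\ell})=\sum_{v}p_{w,v}\,v_q$ with $p_{w,v}\in\mathbb{C}(T)$, the reflection parts $k_{i_j}^{-1}c_{a_{i_j}}s_{i_j,q}$ of the factors show that only $v\le w$ in the Bruhat order occur, and that the top coefficient $p_{w,w}$ equals the product of the leading factors $k_{i_j}^{-1}c_{a_{i_j}}$ (suitably shifted by the preceding reflections), which is a nonzero, hence invertible, element of $\mathbb{C}(T)$. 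This triangularity with respect to length forces the $\pi(T_w)$ to be $\mathbb{C}$-linearly independent, so $\pi$ is injective.
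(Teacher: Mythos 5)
The first thing to say is that the paper itself contains no proof of this theorem: it is stated with an attribution to Cherednik (\cite[Thm. 3.2.1]{C}) in the reduced case and to Noumi \cite{N} in the nonreduced case. So your proposal must be measured against those standard proofs, whose architecture (quadratic relations, braid relations, $\Omega$-compatibility, Bruhat triangularity for faithfulness) you reproduce. The parts you actually carry out are correct. The operator identity $(s-1)c(s-1)=(c_{a_i}+c_{-a_i})(1-s)$, which follows from the cocycle rule $s_{i,q}\circ c_{a_i}=c_{-a_i}\circ s_{i,q}$ and $s_{i,q}^2=1$, gives exactly the displayed factorization, and the quadratic relation is then equivalent to $c_{a_i}+c_{-a_i}=1+k_i^2$; this identity does follow from \eqref{ca}, with the pleasant consistency check that the parameter $k_{2a_i}$ cancels out of the sum, as it must since the quadratic relation involves only $k_i$. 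The $\Omega$-relations and uniqueness are indeed formal. Your injectivity argument is also sound, and in fact slightly better than you advertise: it needs only the elementary spanning statement for $\{T_w\}_{w\in W}$ (well-definedness of $T_w$ via Matsumoto's theorem plus reduction by the quadratic relations), and the triangularity with invertible top coefficient then \emph{reproves} the linear independence, so there is no circularity in the vicinity of the Iwahori--Matsumoto basis theorem.

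The genuine gap is the one you flag yourself: the affine braid relations, which you correctly identify as carrying the real weight, are never proved. Reducing to pairs $(i,j)$ and observing that each relation is a finite identity of rational functions inside the crossed product generated by $\mathbb{C}(T)$, $s_{i,q}$, $s_{j,q}$ is the right strategy, but your proposal stops precisely where the mathematical content begins. Only the commuting case $m_{ij}=2$ is immediate (there $(a_i,a_j)=0$ gives $s_ja_i=a_i$, so $c_{a_i}$ is $s_{j,q}$-invariant and everything commutes by the cocycle identity); the cases of order $3$, $4$ and $6$, and above all the order-$4$ relations of type $C^\vee C_n$ in which $c_{a_0}$ and $c_{a_n}$ carry the full four-parameter numerators of \eqref{ca}, require genuine computation. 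These verifications are exactly the content for which the paper defers to the literature: the reduced rank-two identities go back to Lusztig and Cherednik, and the nonreduced ones are, in essence, Noumi's theorem. As written, the existence half of the statement is therefore not established; to close the proof you must either write out the rank-two identities case by case or cite them precisely, which is what the paper does.
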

\begin{rema}
In the reduced case $\pi_{k,q}$ is a one parameter family of algebra embeddings
of $H(k^\bullet)$ (with $q$ being the free parameter).
In the nonreduced case $\pi_{k,q}$ is a three parameter
family of algebra embeddings of 
$H(k^\bullet)$ (with $q,k_{2\theta},k_{2a_0}$ being
the free parameters).
\end{rema}
The double affine Hecke algebra $\mathbb{H}=\mathbb{H}(k,q)$ is the
subalgebra of $\mathbb{C}(T)\rtimes_qW$ generated by $\mathbb{C}[T]$
and $\pi_{k,q}(H(k^\bullet))$. In the remainder of the paper we will 
often identify $H(k^\bullet)$ with its $\pi_{k,q}$-image
in $\mathbb{H}(k,q)$. 
In addition we write for $\lambda+rc$ ($\lambda\in\Lambda$
and $r\in\mathbb{R}$), 
\[
X^{\lambda+rc}_q=q^{r}X^\lambda
\]
for the element in the double affine Hecke algebra corresponding to
the regular function $t\mapsto q^{r}t^\lambda$ on $T$.

Under the canonical action of $\mathbb{C}(T)\rtimes_qW$ on $\mathbb{C}(T)$,
the subspace $\mathbb{C}[T]$ is $\mathbb{H}$-stable. It is called the
basic, or polynomial, representation of the double affine Hecke algebra.

\subsection{Nonsymmetric Macdonald-Koornwinder polynomials}
The results on nonsymmetric Macdonald and Koornwinder polynomials
in this subsection are well known. In the reduced case
they are due to Cherednik (the definition of the nonsymmetric Macdonald
polynomial was independently given by Macdonald), see, e.g., \cite[\S 3.3]{C}
and \cite{M} and references therein. In the nonreduced case the results
in this subsection are from \cite{N,Sa,St}. The current uniform
presentation of these results follows \cite{StBook}.

For $w\in W$ with reduced expression
$w=u(\lambda)s_{i_1}\ldots s_{i_l}$ 
($\lambda\in\Lambda_{min}^+$, $0\leq i_j\leq n$ and $l=l(w)$) we write
\[
T_w:=u(\lambda)T_{i_1}\cdots T_{i_l}\in H(k^\bullet).
\] 
The expression is independent of the choice of reduced expression.
By unpublished results of Bernstein and Zelevinsky (cf. \cite{Lu}),
there exists a unique injective algebra homomorphism 
$\mathbb{C}[T]\hookrightarrow H(k^\bullet)$, which we denote by 
$p\mapsto p(Y)$, such that $Y^\lambda=T_{\tau(\lambda)}$ 
for $\lambda\in\Lambda^+$. Its image in $H(k^\bullet)$ is denoted by 
$\mathbb{C}_Y[T]$. The center $Z(H(k^\bullet))$
of $H(k^\bullet)$ is $\mathbb{C}_Y[T]^{W_0}$. 

For $x\in\mathbb{C}^*$ and $\alpha\in R_0$
define $x^{\alpha^\vee}\in T$ by $\lambda\mapsto x^{(\lambda,\alpha^\vee)}$
($\lambda\in\Lambda$). 
Let $\gamma_0=\gamma_0(k)\in T$ be the torus element
\begin{equation}\label{gamma0}
\gamma_0:=\prod_{\alpha\in R_0^+}\bigl(k_\alpha^{\frac{1}{2}}
k_{\alpha+|\alpha|^2c/2}^{\frac{1}{2}}\bigr)^{-\alpha^\vee}\in T.
\end{equation}
More generally, define for $\lambda\in\Lambda$ the element
$\gamma_\lambda:=\gamma_\lambda(k,q)\in T$ by
\[
\gamma_\lambda:=u(\lambda)_q\gamma_0.
\]
For $\lambda\in\Lambda^-$ we thus have 
$\gamma_\lambda=q^\lambda\gamma_0$. 
\begin{thm}\label{defNSM}
Let $\lambda\in\Lambda$. There exists a unique $P_\lambda=P_\lambda(\cdot;
k,q)\in\mathbb{C}[T]$ such that 
\[\pi_{k,q}(p(Y))P_\lambda=p(\gamma_\lambda^{-1})P_\lambda
\qquad \forall\, p\in\mathbb{C}[T]
\]
and such that the coefficient of $t^\lambda$
in the expansion of $P_\lambda(t)$ in monomials $t^\nu$ ($\nu\in\Lambda$)
is one. 
\end{thm}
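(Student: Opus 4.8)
The plan is to realize $P_\lambda$ as the simultaneous eigenfunction of the commuting family of Cherednik operators $\pi_{k,q}(Y^\mu)$ ($\mu\in\Lambda$) on $\mathbb{C}[T]$, produced by a triangularity-plus-separation argument with respect to a partial order on $\Lambda$. First I would record that these operators commute: since $p\mapsto p(Y)$ is an algebra homomorphism out of the commutative algebra $\mathbb{C}[T]$, its image $\mathbb{C}_Y[T]\subset H(k^\bullet)$ is commutative, and applying the algebra homomorphism $\pi_{k,q}$ preserves this. Because $p\mapsto p(Y)$ is multiplicative and linear, it suffices to solve the eigenvalue equations on the generators, i.e. to find a normalized $P_\lambda\in\mathbb{C}[T]$ with $\pi_{k,q}(Y^\mu)P_\lambda=\gamma_\lambda^{-\mu}P_\lambda$ for all $\mu\in\Lambda$; the stated identity $\pi_{k,q}(p(Y))P_\lambda=p(\gamma_\lambda^{-1})P_\lambda$ then follows for all $p$.

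The heart of the matter is a triangularity statement. I would introduce Cherednik's partial order $\leq$ on $\Lambda$, in which $\mu<\lambda$ forces the dominant representative $\mu^+$ to satisfy $\mu^+\preceq\lambda^+$ in the dominance order, with a secondary length order breaking ties inside a single $W_0$-orbit. The essential feature is that $\{\mu\in\Lambda : \mu\leq\lambda\}$ lies in the convex hull of $W_0\lambda^+$ and is therefore finite. Using the explicit formula $\pi_{k,q}(T_i)=k_i+k_i^{-1}c_{a_i}(s_{i,q}-1)$ together with the Bernstein--Zelevinsky description $Y^\nu=T_{\tau(\nu)}$ for $\nu\in\Lambda^+$ and $T_w=u(\lambda)T_{i_1}\cdots T_{i_l}$, I would establish
\[
\pi_{k,q}(Y^\mu)\,t^\lambda=\gamma_\lambda^{-\mu}\,t^\lambda+\sum_{\nu<\lambda}c^{(\mu)}_{\nu,\lambda}\,t^\nu .
\]
The diagonal coefficient $\gamma_\lambda^{-\mu}$ is read off by tracking only the $q$-translation part $\tau(\cdot)_q$ inside each operator, which multiplies $t^\lambda$ by a scalar; the $\mathbb{H}$-stability of the basic representation $\mathbb{C}[T]$ guarantees that the reflection parts $s_{i,q}$ and the rational factors $c_{a_i}$ send monomials to genuine polynomials, and one checks that all resulting monomials other than the diagonal one are strictly $\leq$-lower than $t^\lambda$.

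With triangularity in hand the construction is standard. I would check that $\lambda\mapsto\gamma_\lambda=u(\lambda)_q\gamma_0$ is injective on $\Lambda$ under the standing assumptions $0<q<1$ and $0<k_a<1$; equivalently, for $\mu\neq\lambda$ there is $\nu\in\Lambda$ with $\gamma_\mu^{-\nu}\neq\gamma_\lambda^{-\nu}$, since $\gamma_\lambda,\gamma_\mu\in T=\textup{Hom}(\Lambda,\mathbb{C}^*)$ are separated by some monomial. Then, working over the finite lower set $\{\nu\leq\lambda\}$, a spectral-projection (finite Gram--Schmidt) argument yields a unique $P_\lambda=t^\lambda+\sum_{\nu<\lambda}(\cdots)\,t^\nu$ annihilated by $\pi_{k,q}(Y^\mu)-\gamma_\lambda^{-\mu}$ for every $\mu$: one successively removes the lower monomials using the projector onto the $\gamma_\lambda$-eigenspace, the separation of eigenvalues ensuring each unwanted component can be killed. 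Uniqueness follows because the difference of two normalized solutions is supported on $\{\nu<\lambda\}$ and is again a simultaneous eigenvector with eigenvalue $\gamma_\lambda^{-\mu}$, hence vanishes by downward induction on the order.

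The main obstacle, as usual in nonsymmetric Macdonald--Koornwinder theory, is the triangularity computation of the second paragraph: fixing the partial order with its finiteness property and verifying that the diagonal eigenvalue is exactly $\gamma_\lambda^{-\mu}$. This requires careful bookkeeping of the $q$-shifts produced by the reflections $s_{i,q}$ occurring in a reduced expression for $\tau(\nu)$, together with the check that no off-diagonal monomial can reach or exceed $\lambda$ in the order. By comparison, the separation of the eigenvalues $\gamma_\lambda^{-\mu}$ is routine in the regime $0<k_a<1$, $0<q<1$, where resonances among the $\gamma_\lambda$ are excluded.
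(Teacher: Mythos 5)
Your proposal is correct and follows essentially the same route as the paper's: the paper in fact gives no proof of this theorem, quoting it from the sources it cites (Cherednik's and Macdonald's books in the reduced case, Noumi--Sahi--Stokman in the nonreduced case), and those references establish it precisely by your argument --- commutativity of the operators $\pi_{k,q}(p(Y))$ on the basic representation, triangularity with respect to Cherednik's partial order with diagonal entries $\gamma_\lambda^{-\mu}$, and separation of the spectral points $\gamma_\lambda$ under the standing assumptions $0<q<1$, $0<k_a<1$. The one step I would not call routine is that separation: it is Macdonald's convexity lemma, which uses the specific structure $\gamma_\lambda=q^{\lambda}\,w_\lambda\gamma_0$ with $w_\lambda\gamma_0$ the Weyl translate of $\gamma_0$ minimizing the pairing with $\lambda$, and derives a contradiction from $\gamma_\lambda=\gamma_\mu$ by pairing the resulting vector identity with $\lambda-\mu$ (a careless choice of conventions here can produce apparent resonances $k_\alpha^2=q^j$ even inside the stated parameter range, so this is exactly where the hypotheses on $k$ and $q$ do real work).
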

$P_\lambda$ is the monic nonsymmetric Macdonald polynomial
of degree $\lambda$ in the reduced case and the nonsymmetric monic
Koornwinder polynomial in the nonreduced case. We refer to $P_\lambda$
in the remainder of the text as the monic 
nonsymmetric Macdonald-Koornwinder polynomial (similar terminology
will be used later
for the normalized and symmetrized versions of $P_\lambda$).

Write $k^{-1}$ for the multiplicity function $a\mapsto k_a^{-1}$. Similarly
to Theorem \ref{defNSM} there exists, for $\lambda\in\Lambda$, a unique
$P_\lambda^\prime=P_\lambda^\prime(\cdot;k,q)\in\mathbb{C}[T]$ such that
\[
\pi_{k^{-1},q^{-1}}(p(Y))P_\lambda^\prime=p(\gamma_\lambda)P_\lambda^\prime\qquad
\forall\, p\in\mathbb{C}[T]
\]
and such that the coefficient of $t^\lambda$ in the expansion of 
$P_\lambda^\prime(t)$ in monomials $t^\nu$ ($\nu\in\Lambda$) is one.

Next we define the normalized versions of $P_\lambda$ and $P_\lambda^\prime$.
For this we first need to recall the evaluation formulas for $P_\lambda$
and $P_\lambda^\prime$.

The multiplicity function $k^d$ on $R$ dual to $k$ is defined as follows.
In the reduced case $k^d:=k$. In the nonreduced case $k^d_0:=k_{2\theta}$,
$k^d_{2\theta}:=k_0$ and the values on the remaining $W$-orbits of $R$
are unchanged. Set $\gamma_{\lambda,d}=\gamma_\lambda(k^d,q)$.
The evaluation formulas for the nonsymmetric Macdonald-Koornwinder polynomials
then read
\begin{equation}\label{evalform}
\begin{split}
P_\lambda(\gamma_{0,d})&=\prod_{a\in R^{\bullet,+}\cap
u(\lambda)^{-1}R^{\bullet,-}}(k_a^d)^{-1}c_a(\gamma_0;k^d,q),\\
P_\lambda^\prime(\gamma_{0,d}^{-1})&=\prod_{a\in R^{\bullet,+}\cap
u(\lambda)^{-1}R^{\bullet,-}}k_a^dc_a(\gamma_0^{-1};(k^d)^{-1},q^{-1}).
\end{split}
\end{equation}
By the conditions on the parameters $k_a$ and $q$ we have
$P_\lambda(\gamma_{0,d})\not=0\not=
P_\lambda^\prime(\gamma_{0,d}^{-1})$ for all $\lambda\in\Lambda$. 
Hence the following definition makes sense.

\begin{defi}
Let $\lambda\in\Lambda$. The normalized nonsymmetric Macdonald-Koornwinder 
polynomial $E(\gamma_\lambda;\cdot)=
E(\gamma_\lambda;\cdot;k,q)\in\mathbb{C}[T]$ of degree $\lambda$ is defined by
\[
E(\gamma_\lambda;t):=\frac{P_\lambda(t)}{P_\lambda(\gamma_{0,d})}.
\]
\end{defi}
Similarly we define $E^\prime(\gamma_\lambda^{-1};\cdot)=
E^\prime(\gamma_\lambda^{-1};\cdot;k,q)\in\mathbb{C}[T]$ by 
\[
E^\prime(\gamma_\lambda^{-1};t):=
\frac{P_\lambda^\prime(t)}{P_\lambda^\prime(\gamma_{0,d}^{-1})}.
\]
It is related to $E(\gamma_\lambda;t)$ by the formula 
\begin{equation}\label{conjugationsymm}
E^\prime(\gamma_\lambda^{-1};t^{-1})=
k_{w_0}^{-1}\bigl(\pi_{k,q}(T_{w_0})E(\gamma_{-w_0\lambda};\cdot)\bigr)(t),
\end{equation}
where $w_0\in W_0$ is the longest Weyl group element and
$k_w:=\prod_{\alpha\in R_0^+\cap w^{-1}R_0^-}k_\alpha$ for $w\in W_0$
(see \cite[(3.3.26)]{C} for a proof of \eqref{conjugationsymm}
in the reduced case; its proof easily extends
to the nonreduced case).

An important property of the normalized 
nonsymmetric Macdonald-Koornwinder
polynomials is duality,
\[
E(\gamma_\lambda;\gamma_{\nu,d};k,q)=
E(\gamma_{\nu,d};\gamma_\lambda;k^d,q),\qquad
\forall\,\lambda,\nu\in\Lambda.
\]
A similar duality formula is valid for $E^\prime$.

Define $N(\lambda)=N(\lambda;k,q)$ ($\lambda\in\Lambda$) by
\[
N(\lambda):=\prod_{a\in R^{\bullet,+}\cap u(\lambda)^{-1}R^{\bullet,-}}
\frac{c_{-a}(\gamma_0;k^d,q)}{c_a(\gamma_0;k^d,q)}.
\]
They appear as the quadratic norms of the nonsymmetric Macdonald-Koornwinder
polynomials. 
Concretely, if the parameters satisfy the additional conditions
$|k_ak_{2a}^{-1}|\leq 1$ for all $a\in R$ (this only gives additional
constraints in the nonreduced case), then 
\[
\langle E(\gamma_\lambda;\cdot;k,q),
E(\gamma_\nu^{-1};k^{-1},q^{-1})\rangle_{k,q}=\langle 1,1\rangle_{k,q}
N(\lambda;k,q)\delta_{\lambda,\nu}
\]
for all $\lambda,\nu\in\Lambda$ with respect to the
sesquilinear pairing 
\[
\langle p_1,p_2\rangle_{k,q}:=\int_{T_u}p_1(t)\overline{p_2(t)}
\Bigl(\prod_{a\in R^{\bullet,+}}\frac{1}{c_a(t;k,q)}\Bigr)dt,
\qquad p_1,p_2\in\mathbb{C}[T].
\]
Here
$T_u:=\textup{Hom}(\Lambda,S^1)\subset T$ with $S^1$ the unit circle
in the complex plane, and 
$dt$ is the normalized Haar measure on the compact torus $T_u$. 

\subsection{Theta functions}
The results in this section are 
from \cite[\S 3.2]{C} in the reduced case
and from \cite{St2} in the nonreduced case. 

The $q$-shifted factorial is 
\[\bigl(x;q\bigr)_{r}:=\prod_{i=0}^{r-1}(1-q^ix),\qquad 
r\in\mathbb{Z}_{\geq 0}\cup\{\infty\}
\]
(by convention empty products are equal to one). 
The $q$-Gamma function is 
\begin{equation}\label{qgamma}
\Gamma_q(x):=
(1-q)^{1-x}\frac{\bigl(q;q\bigr)_{\infty}}{\bigl(q^x;q\bigr)_{\infty}},
\end{equation}
see \cite[\S 1.10]{GR}. Set
\begin{equation}\label{Jacobi}
\theta(x;q):=\bigl(q;q\bigr)_{\infty}\bigl(x;q\bigr)_{\infty}
\bigl(q/x;q\bigr)_{\infty}.
\end{equation}
It is the Jacobi theta function $\sum_{r=-\infty}^{\infty}
q^{\frac{r^2}{2}}(-q^{-\frac{1}{2}}x)^r$, written in multiplicative form
via the Jacobi triple product identity. It satisfies
the functional equations
\[
\theta(q^rx;q)=(-x)^{-r}q^{-\frac{r(r-1)}{2}}\theta(x;q)\qquad 
\forall\,r\in\mathbb{Z}.
\]
The theta function associated to the lattice $\Lambda$ is
the holomorphic $W_0$-invariant
function $\vartheta(\cdot)=\vartheta_\Lambda(\cdot)$ on $T$ defined by 
\begin{equation}\label{vartheta}
\vartheta(t):=\sum_{\lambda\in\Lambda}q^{\frac{|\lambda|^2}{2}}t^\lambda.
\end{equation}
Since the base for $\vartheta(\cdot)$ will always be $q$ we do not
specify it in the notation.
The theta function $\vartheta(\cdot)$
satisfies the functional equations $\vartheta(q^\lambda t)=
q^{-\frac{|\lambda|^2}{2}}t^{-\lambda}\vartheta(t)$ for all $\lambda\in\Lambda$.
\begin{rema}
We will always specify the variable dependence, 
$\theta(\cdot;q)$ and $\vartheta(\cdot)$,
to avoid confusion with the
highest short root $\theta$ and the highest root $\vartheta$ of $R_0$.
\end{rema}
\begin{defi}
Define $G=G_{k,q}\in\mathcal{M}(T)$ by
\[
G(t):=\vartheta(t)^{-1}
\]
in the reduced case and
\[
G(t):=\bigl(q_\theta^2;q_\theta^2\bigr)_{\infty}^{-n}
\prod_{\alpha\in R_{0,s}}\bigl(-q_\theta k_0k_{2a_0}^{-1}t^\alpha;
q_\theta^2\bigr)_{\infty}^{-1}
\]
in the nonreduced case.
\end{defi}
Note that $G(\cdot)$ is $W_0$-invariant, and that $G(t)=G(t^{-1})$.
\begin{rema}\label{thetarem}
In the nonreduced case the 
set $R_0^{s,+}$ of positive short roots
is an orthogonal basis of $V$ and a $\mathbb{Z}$-basis of 
$\Lambda=Q$ (cf. Subsection \ref{specialnr}). 
By the Jacobi triple product identity it then
follows that
$G(t)$ equals $\vartheta(t)^{-1}$ in the nonreduced case if $k_0=k_{2a_0}$.
\end{rema}

We recall the most fundamental property of $G(\cdot)$ in the following
proposition. It implies that $G(\cdot)$ serves as the analog of the Gaussian
in the context of the double affine Hecke algebra. For proofs and more facts 
we refer to \cite{CM,St2}.
\begin{prop}
{\bf (i)} Given a multiplicity function $k$ on $R$, the assignment
$k_{a_0}^\tau:=k_{2a_0}$, $k_{2a_0}^\tau=k_{a_0}$ and $k_{a_i}^\tau:=k_{a_i}$,
$k_{2a_i}^\tau:=k_{2a_i}$ for $1\leq i\leq n$ determines a
multiplicity function $k^\tau$ on $R$.\\
{\bf (ii)} There exists a unique algebra isomorphism
$\tau: \mathbb{H}(k,q)\overset{\sim}{\longrightarrow}
\mathbb{H}(k^\tau,q)$ satisfying
\begin{equation*}
\begin{split}
\tau(T_0)&=X_q^{-a_0}T_0^{-1},\\
\tau(T_i)&=T_i,\qquad\qquad\qquad\qquad\,\, 1\leq i\leq n,\\
\tau(X^\lambda)&=X^\lambda,\qquad\quad\qquad\qquad\,\,\,\,\, \lambda\in\Lambda,\\
\tau(u(\lambda))&=q^{-\frac{|\lambda|^2}{2}}X^\lambda u(\lambda),\quad\qquad\,\,\,
\lambda\in\Lambda_{min}^+.
\end{split}
\end{equation*}
{\bf (iii)} For all $Z\in\mathbb{H}(k,q)$ we have
\[
G_{k,q}(\cdot)ZG_{k,q}(\cdot)^{-1}=\tau(Z)
\]
in $\mathcal{M}(T)\rtimes_qW$, where we view 
both $\mathbb{H}(k,q)$ and $\mathbb{H}(k^d,q)$ as subalgebras of 
$\mathcal{M}(T)\rtimes_qW$.
\end{prop}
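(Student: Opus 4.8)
The plan is to define $\tau$ directly as conjugation by the meromorphic function $G_{k,q}$, so that part (iii) becomes essentially tautological, and then to read off parts (i) and (ii) from the explicit conjugation of the algebra generators. I begin with the elementary statement (i). In the reduced case $2a_0\notin R$, so the convention $k_{2a_0}=k_{a_0}$ forces $k^\tau=k$ and there is nothing to check. In the nonreduced case I would recall that $R$ has the five $W$-orbits $W(a_0),W(2a_0),W(\varphi),W(\theta),W(2\theta)$ and that $W(a_0)$ and $W(2a_0)$ are genuinely distinct orbits. The prescription merely interchanges the (constant) values of $k$ on these two orbits and leaves the values on the remaining three orbits untouched; since a multiplicity function is precisely a $W$-invariant function $R\to\mathbb{C}^*$, the resulting $k^\tau$ is again $W$-invariant, and the convention $k_{2a}=k_a$ for $2a\notin R$ is respected because it only concerns orbits that are fixed. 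I would also record that $(k^\tau)^\tau=k$, so that $\tau$ is an involution on multiplicity functions; this is used at the end.

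For the construction, note that $\vartheta$ (reduced case) and the displayed infinite product (nonreduced case) are nonzero elements of $\mathcal{M}(T)$, so $G_{k,q}$ is invertible in the field $\mathcal{M}(T)$ and hence in $\mathcal{M}(T)\rtimes_qW$. Conjugation $Z\mapsto G_{k,q}\,Z\,G_{k,q}^{-1}$ is therefore an algebra automorphism of $\mathcal{M}(T)\rtimes_qW$, and I would take this automorphism as the definition of $\tau$. This makes (iii) hold by construction and leaves only the computation of $\tau$ on generators together with the identification of the target algebra. The single computational input is the effect of conjugation on a difference–reflection operator: for $w\in W$ one has $G_{k,q}\,w_q\,G_{k,q}^{-1}=\bigl(G_{k,q}/w_q(G_{k,q})\bigr)w_q$, where $w_q(G_{k,q})(t)=G_{k,q}(w_q^{-1}t)$, whereas $G_{k,q}$ commutes with every multiplication operator. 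Thus each conjugation reduces to evaluating a quotient $G_{k,q}/w_q(G_{k,q})$, which the functional equations of $G_{k,q}$ (coming from those of $\vartheta$ in the reduced case, and directly from the product formula in the nonreduced case) turn into an explicit monomial-type factor.

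With this I would verify the four formulas. Since $X^\lambda$ is a multiplication operator, $\tau(X^\lambda)=X^\lambda$. For $1\le i\le n$ the reflection $s_i\in W_0$ acts without a $q$-shift and $G_{k,q}$ is $W_0$-invariant, so $s_{i,q}(G_{k,q})=G_{k,q}$ and $G_{k,q}$ commutes with $T_i=k_i+k_i^{-1}c_{a_i}(s_{i,q}-1)$, giving $\tau(T_i)=T_i$. The substantive cases are $T_0$ and $u(\lambda)$, where $w$ involves a genuine translation and the quotient $G_{k,q}/w_q(G_{k,q})$ is nontrivial; inserting the resulting factor into $T_0=k_0+k_0^{-1}c_{a_0}(s_{0,q}-1)$, respectively into $u(\lambda)_q$, should produce exactly $X_q^{-a_0}T_0^{-1}$, respectively $q^{-|\lambda|^2/2}X^\lambda u(\lambda)$. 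The crucial bookkeeping point, and the step I expect to be the main obstacle, is to check that in the nonreduced case the factor produced by the product formula for $G_{k,q}$ converts the $k_0$-dependence of the operator $T_0$ into the $k_{2a_0}$-dependence required for the image to lie in $\mathbb{H}(k^\tau,q)$; equivalently, that the right-hand side $X_q^{-a_0}T_0^{-1}$ is to be read with the quadratic relation of $\mathbb{H}(k^\tau,q)$, i.e. with $k_0$ replaced by $k_{2a_0}$. This is precisely the swap in (i), and it is the one place where the nonreduced product formula rather than $\vartheta$ is essential.

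Finally I would conclude (ii). Since $T_0,\ldots,T_n$, the $X^\lambda$ ($\lambda\in\Lambda$) and the $u(\lambda)$ ($\lambda\in\Lambda_{min}^+$) generate $\mathbb{H}(k,q)$, an algebra homomorphism is determined by its values on them, which yields uniqueness. The computed values show $\tau(\mathbb{H}(k,q))\subseteq\mathbb{H}(k^\tau,q)$, and each generator of $\mathbb{H}(k^\tau,q)$ is recovered from the images (the $X^\lambda$ and $T_i$ directly, and $T_0$, $u(\lambda)$ by inverting the two displayed relations), so $\tau$ is surjective onto $\mathbb{H}(k^\tau,q)$; being conjugation by an invertible element it is injective, hence an isomorphism. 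Alternatively, running the same construction with $k^\tau$ in place of $k$ and using $(k^\tau)^\tau=k$ supplies a two-sided inverse.
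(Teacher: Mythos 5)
Your overall strategy --- define $\tau$ as conjugation by $G_{k,q}$ inside $\mathcal{M}(T)\rtimes_qW$, so that {\bf (iii)} holds by construction, and then compute the conjugates of the generators --- is the right one; it is in fact the argument of the sources \cite{CM,St2} to which the paper (which gives no proof of its own, only these citations) refers. Part {\bf (i)} is handled correctly, as are the easy generator cases $\tau(X^\lambda)=X^\lambda$ and $\tau(T_i)=T_i$ for $1\leq i\leq n$ (via $W_0$-invariance of $G_{k,q}$), and the formal deduction of {\bf (ii)} from the generator formulas is also fine: uniqueness from generation, surjectivity since $T_0^{-1}=\tau(X_q^{a_0}T_0)$ and $u(\lambda)=\tau(q^{|\lambda|^2/2}X^{-\lambda}u(\lambda))$ put all generators of $\mathbb{H}(k^\tau,q)$ in the image, injectivity because conjugation by an invertible element is injective.

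The genuine gap is that the two computations carrying the entire content of the proposition are asserted rather than performed: you write that conjugating $T_0$ and $u(\lambda)$ ``should produce exactly'' the displayed right-hand sides, and you defer the nonreduced bookkeeping as ``the main obstacle'' without resolving it. Until these are done, it is not established that conjugation by $G_{k,q}$ maps $\mathbb{H}(k,q)$ into $\mathbb{H}(k^\tau,q)$ at all, so existence in {\bf (ii)} --- and with it the nontautological half of {\bf (iii)} --- remains unproved. For the record, the missing verifications do go through along the lines you indicate. In the reduced case, the functional equation $\vartheta(q^\lambda t)=q^{-|\lambda|^2/2}t^{-\lambda}\vartheta(t)$ and the $W_0$-invariance of $\vartheta$ give the cocycle values $G_{k,q}/s_{0,q}(G_{k,q})=X_q^{-a_0}$ and (using $u(\lambda)\in\tau(\lambda)W_0$) $G_{k,q}u(\lambda)_qG_{k,q}^{-1}=q^{-|\lambda|^2/2}X^\lambda u(\lambda)_q$; then $G_{k,q}T_0G_{k,q}^{-1}=k_0+k_0^{-1}c_{a_0}\bigl(X_q^{-a_0}s_{0,q}-1\bigr)$ agrees with $X_q^{-a_0}T_0^{-1}=X_q^{-a_0}\bigl(k_0^{-1}+k_0^{-1}c_{a_0}(s_{0,q}-1)\bigr)$ because of the one-line rational identity $k_0-k_0^{-1}c_{a_0}(t)=t_q^{-a_0}k_0^{-1}\bigl(1-c_{a_0}(t)\bigr)$, both sides being $(k_0-k_0^{-1})/(1-t_q^{a_0})$. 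In the nonreduced case (where $\Omega=\{1\}$, so only $T_0$ is at stake) the $q$-shifted factorial product defining $G_{k,q}$ gives $G_{k,q}/s_{0,q}(G_{k,q})=k_0k_{2a_0}^{-1}X_q^{-a_0}\,(1+q_\theta k_{2a_0}k_0^{-1}t^{-\theta})/(1+q_\theta k_0k_{2a_0}^{-1}t^{-\theta})$; the extra ratio is exactly what converts $c_{a_0}(\cdot;k,q)$ into $c_{a_0}(\cdot;k^\tau,q)$ --- this is where the parameter swap of {\bf (i)} is forced, confirming your anticipation --- and the comparison of the non-reflection terms is the identity $k_0-k_0^{-1}c_{a_0}(t;k,q)=t_q^{-a_0}k_{2a_0}^{-1}\bigl(1-c_{a_0}(t;k^\tau,q)\bigr)$, both sides equalling $\bigl((k_0-k_0^{-1})+(k_{2a_0}-k_{2a_0}^{-1})t_q^{a_0}\bigr)/\bigl(1-(t_q^{a_0})^2\bigr)$. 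These checks are the proof; as written, your proposal outlines them but does not supply them.
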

\subsection{The nonsymmetric basic hypergeometric function}
The nonsymmetric Mac\-do\-nald-Mehta weight 
$\Xi(\cdot)=\Xi(\cdot;k,q): \Lambda\rightarrow\mathbb{C}$ is
\[
\Xi(\lambda;k,q):=\frac{G_{k^{\tau d},q}(\gamma_{\lambda,\tau})}
{G_{k^{\tau d},q}(\gamma_{0,\tau})N(\lambda;k^{d\tau},q)},
\]
where $\gamma_{\lambda,\tau}:=\gamma_\lambda(k^\tau,q)$,
$k^{\tau d}=(k^\tau)^d$ and $k^{d\tau}=(k^d)^\tau$ (see \cite[\S 7]{CM}
in the reduced case and \cite[\S 6.1]{St} in the nonreduced case). 
We have $\Xi(0;k,q)=1$ and
\begin{equation}\label{muproperties}
\begin{split}
\Xi(\lambda;k,q)&=\Xi(\lambda;k^d,q),\\
\Xi(-w_0\lambda;k,q)&=\Xi(\lambda;k,q)
\end{split}
\end{equation}
for all $\lambda\in\Lambda$.
Due to the factor $G_{k^{\tau d},q}(\gamma_{\lambda,\tau})$ 
in the weight $\Xi(\lambda)$,
the discrete Macdonald-Metha integral $M:=M(k,q)$ defined by
\[
M:=G(\gamma_0;k^{d\tau},q)G(\gamma_{0,d};k^\tau,q)\sum_{\lambda\in\Lambda}
\Xi(\lambda;k,q)
\]
is convergent. It can be evaluated explicitly, see \cite[Thm 1.1]{CM}
in the reduced case and \cite[Prop. 6.1]{St2} 
in the nonreduced case. It will play
the role of normalization constant for the 
(nonsymmetric) basic hypergeometric
function.

For $i\in\{1,\ldots,n\}$ we denote the simple
root $-w_0\alpha_i$ by $\alpha_{i^*}$. 
Write $\mathbb{K}$ for the field of meromorphic
functions on $T\times T$.
\begin{thm}\label{Ekernel}
{\bf (i)} There exists a unique antiisomorphism $\xi: \mathbb{H}(k,q)\rightarrow
\mathbb{H}((k^d)^{-1},q^{-1})$ satisfying
\begin{equation*}
\begin{split}
\xi(T_i)&=T_{i^*}^{-1},\qquad\qquad\quad 1\leq i\leq n,\\
\xi(Y^\lambda)&=X^{-w_0\lambda},\qquad\qquad \lambda\in\Lambda,\\
\xi(X^\lambda)&=T_{w_0}Y^\lambda T_{w_0}^{-1},\qquad\,\,\, \lambda\in\Lambda.
\end{split}
\end{equation*}
{\bf (ii)} There exists a unique $\mathcal{E}(\cdot,\cdot)
=\mathcal{E}(\cdot,\cdot;k,q)
\in\mathbb{K}$ satisfying
\begin{enumerate}
\item  $(t,\gamma)\mapsto G_{k^\tau,q}(t)^{-1}G_{k^{d\tau},q}(\gamma)^{-1}
\mathcal{E}(t,\gamma;k,q)$ is a holomorphic function on $T\times T$,
\item $\pi_{k,q}^t(Z)\mathcal{E}=
\pi_{(k^d)^{-1},q^{-1}}^\gamma(\xi(Z))\mathcal{E}$
for all $Z\in\mathbb{H}(k,q)$, where $\pi^t(Z)$ and
$\pi^\gamma(\xi(Z))$ are the actions of $\pi(Z)$ and
$\pi(\xi(Z))$ on the first and second torus variable respectively, 
\item $\mathcal{E}(\gamma_{0,d},\gamma_0)=1$.
\end{enumerate}
Explicitly,
\[
\mathcal{E}(t,\gamma;k,q)=M_{k,q}^{-1}G_{k^\tau,q}(t)G_{k^{d\tau},q}(\gamma)
\sum_{\lambda\in\Lambda}\Xi(\lambda;k,q)E(\gamma_{-w_0\lambda,\tau};t;k^\tau,q)
E^\prime(\gamma_{\lambda,d\tau}^{-1};\gamma;k^{d\tau},q)
\]
with $\gamma_{\lambda,d\tau}:=\gamma_\lambda(k^{d\tau},q)$.
The sum converges normally for $(t,\gamma)$ in compacta of
$T\times T$.
\end{thm}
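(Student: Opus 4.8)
The plan is to realize (i) as Cherednik's duality antiinvolution and then to build the kernel in (ii) from the explicit series, verifying its three characterizing properties directly. For (i), uniqueness is immediate because $\mathbb{H}(k,q)$ is generated by $T_1,\ldots,T_n$, by the $Y^\lambda$ and by the $X^\lambda$ (the Bernstein--Zelevinsky presentation), so an antihomomorphism is determined by its values on these elements. For existence I would verify that the stated assignment carries each defining relation to a valid relation in $\mathbb{H}((k^d)^{-1},q^{-1})$, with products reversed. The finite quadratic and braid relations among the $T_i$ go to those among the $T_{i^*}^{-1}$ under the parameter change $k\mapsto(k^d)^{-1}$, which matches $k_i$ with $(k^d_{i^*})^{-1}$ because $i\mapsto i^*$ is the diagram automorphism induced by $-w_0$; the Bernstein relations coupling $T_i$ with $Y^\lambda$ are carried to those coupling $T_{i^*}^{-1}$ with $X^{-w_0\lambda}$; and the deepest point is that the cross relations between the $X^\lambda$ and the $Y^\mu$ are mapped, under the swap $X\leftrightarrow Y$ twisted by $-w_0$ and $T_{w_0}$-conjugation, to the same cross relations read in reverse. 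The only genuine bookkeeping is the parameter substitution and the $i\mapsto i^*$ twist, which in the nonreduced case additionally interchanges the orbit labels $k_0$ and $k_{2\theta}$.

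For (ii) I would define $\mathcal{E}$ by the displayed series and treat the three properties in turn. For convergence, the weight $\Xi(\lambda)$ carries the factor $G_{k^{\tau d},q}(\gamma_{\lambda,\tau})$, which by the functional equation $\vartheta(q^\lambda t)=q^{-|\lambda|^2/2}t^{-\lambda}\vartheta(t)$ in the reduced case, and by the analogous product form of $G$ in the nonreduced case, decays like $q^{|\lambda|^2/2}$ up to an exponential factor, while $N(\lambda;k^{d\tau},q)$ stays bounded. Since the normalized polynomials $E(\gamma_{-w_0\lambda,\tau};t)$ and $E^\prime(\gamma_{\lambda,d\tau}^{-1};\gamma)$ grow at most exponentially in $|\lambda|$, uniformly for $(t,\gamma)$ in compacta, the super-exponential Gaussian decay dominates and yields normal convergence on compacta of $T\times T$. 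Property (1) is then immediate: dividing by $G_{k^\tau,q}(t)G_{k^{d\tau},q}(\gamma)$ leaves a normally convergent series of polynomials, hence an entire function. Property (3) follows by evaluating at $(t,\gamma)=(\gamma_{0,d},\gamma_0)$, where the evaluation formulas \eqref{evalform} collapse each summand so that the series reduces, by the definition of the Macdonald--Mehta integral $M_{k,q}$, to $1$.

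The heart of the proof, and the step I expect to be the main obstacle, is property (2), the bispectral intertwining; it suffices to verify it on the generators $X^\lambda$, $T_i$ and the $Y^\lambda$. The mechanism is that the series diagonalizes the $Y$-operators in $t$ through the eigenfunctions $E(\gamma_{-w_0\lambda,\tau};t;k^\tau,q)$ of Theorem \ref{defNSM}, and that under the duality $E(\gamma_\lambda;\gamma_{\nu,d};k,q)=E(\gamma_{\nu,d};\gamma_\lambda;k^d,q)$ the resulting eigenvalue is precisely the monomial in $\gamma$ by which $\xi(Y^\lambda)=X^{-w_0\lambda}$ acts, while multiplication by $t^\lambda$ is matched to $\xi(X^\lambda)=T_{w_0}Y^\lambda T_{w_0}^{-1}$ in $\gamma$ via \eqref{conjugationsymm}. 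To control the parameter twists I would first strip off the Gaussians using the identity $G_{k,q}(\cdot)ZG_{k,q}(\cdot)^{-1}=\tau(Z)$, converting the intertwining for $\mathcal{E}$ under $\mathbb{H}(k,q)$ into one for the entire kernel $G_{k^\tau,q}(t)^{-1}G_{k^{d\tau},q}(\gamma)^{-1}\mathcal{E}$ under the $\tau$-twisted algebras, where the Macdonald--Koornwinder duality applies cleanly. The real difficulty is the simultaneous bookkeeping of the flavors $k^\tau$, $k^{d\tau}$, $k^{\tau d}$ and $(k^d)^{-1}$ together with the $-w_0$ twist, and ensuring that every identity survives in the nonreduced $C^\vee C_n$ case, where $k^d\neq k$ and $k_0,k_{2\theta}$ are interchanged. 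Uniqueness of $\mathcal{E}$ then follows from the rigidity of (2): the intertwining for $Z=Y^\lambda$ with $\xi(Y^\lambda)=X^{-w_0\lambda}$ forces any solution to expand over the Macdonald eigenbasis with prescribed coefficients, holomorphicity (1) removes any quasiconstant ambiguity, and (3) fixes the overall scalar.
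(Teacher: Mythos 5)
Your overall strategy (define $\mathcal{E}$ by the displayed series and verify the three properties directly, after establishing $\xi$ by relation-checking) is the strategy of the sources this paper builds on, but two of your steps do not go through as written. The concrete failure is property (3), which you declare immediate. At $(t,\gamma)=(\gamma_{0,d},\gamma_0)$ the first polynomial factor does collapse, $E(\gamma_{-w_0\lambda,\tau};\gamma_{0,d};k^\tau,q)=1$, because the normalization point of $E(\cdot;\cdot;k^\tau,q)$ is $\gamma_0((k^\tau)^d,q)$, and one checks from \eqref{gamma0} that $\gamma_0$ depends only on the products $k_\theta k_{a_0}$ (short roots) and $k_\varphi$ (long roots), whence $\gamma_0((k^\tau)^d,q)=\gamma_{0,d}$. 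But the second factor does not collapse: $E'(\cdot;\cdot;k^{d\tau},q)$ is by definition normalized at $\gamma_0((k^{d\tau})^d,q)^{-1}$, and the same orbit bookkeeping gives $\gamma_0((k^{d\tau})^d,q)=\gamma_0$, so its normalization point is $\gamma_0^{-1}$, not the point $\gamma_0$ at which you must evaluate. Since $E'$ is a nonsymmetric polynomial, $E'(\gamma_{\lambda,d\tau}^{-1};\gamma_0;k^{d\tau},q)\neq 1$ in general; the identity $\gamma_0^{-1}=w_0\gamma_0$ rescues only the symmetrized kernel of Theorem \ref{QSP}, where $W_0$-invariance applies. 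Thus property (3) is equivalent to the nontrivial identity $\sum_{\lambda}\Xi(\lambda)\,E'(\gamma_{\lambda,d\tau}^{-1};\gamma_0;k^{d\tau},q)=\sum_{\lambda}\Xi(\lambda)$, which your termwise-collapse argument does not supply; it is part of what the machinery of \cite{CM} and \cite{St2} establishes.

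There is also a hidden prerequisite in your proof of (i): in this paper $\mathbb{H}(k,q)$ is defined as a subalgebra of the operator algebra $\mathbb{C}(T)\rtimes_q W$, not by generators and relations, so the plan to ``verify that the assignment carries each defining relation to a valid relation'' presupposes an abstract presentation (PBW-type) theorem for the double affine Hecke algebra; in the nonreduced $C^\vee C_n$ case this is precisely the hard content of the duality theory of \cite{Sa}, not bookkeeping. The paper sidesteps both issues by transport of structure: for (i) it sets $\xi(Z)=T_{w_0}(\delta(\eta(Z)))T_{w_0}^{-1}$, with $\eta$ the elementary inversion isomorphism induced by $f(t)\mapsto f(t^{-1})$ and $\delta$ the known duality antiisomorphism of \cite{C} and \cite{Sa}; for (ii) it identifies $\mathcal{E}(t,\gamma)$, up to normalization, with $\bigl(\pi^\gamma_{(k^d)^{-1},q^{-1}}(T_{w_0})\widetilde{\mathcal{E}}\bigr)(t^{-1},\gamma)$, where $\widetilde{\mathcal{E}}$ is the kernel already constructed in \cite{CM} and \cite{St2}, so that holomorphy, normal convergence, the $\xi$-intertwining and the basepoint value are inherited from the cited results. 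If you want a self-contained proof along your lines, you must either genuinely re-derive those results (including the $T_i$-intertwining in property (2), which your sketch lists but never addresses, and which is where the Hecke structure of the weights $\Xi(\lambda)$ enters) or cite them, as the paper does.
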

\begin{proof}
{\bf (i)} Set $(If)(t):=f(t^{-1})$ for $f\in\mathbb{C}(T)$.
There exists a unique algebra isomorphism $\eta: \mathbb{H}(k,q)
\overset{\sim}{\longrightarrow}\mathbb{H}(k^{-1},q^{-1})$ such that 
$\pi_{k,q}(Z)\circ I=I\circ \pi_{k^{-1},q^{-1}}(\eta(Z))$ for all 
$Z\in\mathbb{H}(k,q)$ (both sides of the identity
viewed as operators on $\mathbb{C}(T)$).
Then
$\xi(Z)=T_{w_0}(\delta(\eta(Z)))T_{w_0}^{-1}$
with $\delta: \mathbb{H}(k^{-1},q^{-1})\overset{\sim}{\longrightarrow}
\mathbb{H}((k^d)^{-1},q^{-1})$ the linear duality antiisomorphism 
mapping $T_i$ to $T_i^{-1}$ ($1\leq i\leq n$), $X^\lambda$ to
$Y^{-\lambda}$ and $Y^\lambda$ to $X^{-\lambda}$ ($\lambda\in\Lambda$). 
See \cite[\S 3.3.2]{C} in the reduced case and \cite{Sa} in the nonreduced case
for further details on 
the duality antiisomorphism $\delta$, as well as \cite{Ion,Ha}.\\
{\bf (ii)} A nonsymmetric
kernel function $\widetilde{\mathcal{E}}\in\mathcal{M}(T\times T)$
was defined and studied by Cherednik \cite[\S 5]{CM} in the reduced
case (denoted in \cite{CM} as $\mathcal{E}_{q^{-1}}$)
and by the author \cite[\S 5]{St2} in the nonreduced case
(denoted in \cite{St2} as $\mathfrak{E}_\ddagger$).
Its transformation property with respect
to the actions of the double affine Hecke algebra is
\[
\pi_{k^{-1},q^{-1}}^t(Z)\widetilde{\mathcal{E}}=
\pi_{(k^d)^{-1},q^{-1}}^\gamma(\delta(Z))\widetilde{\mathcal{E}},
\qquad \forall\, Z\in\mathbb{H}(k^{-1},q^{-1}).
\]
Our kernel $\mathcal{E}$ can be expressed
in terms of $\widetilde{\mathcal{E}}$
by
\[
\mathcal{E}(t,\gamma)=\bigl(\pi_{(k^d)^{-1},q^{-1}}^\gamma(T_{w_0})
\widetilde{\mathcal{E}}\bigr)(t^{-1},\gamma)
\]
up to normalization,
cf. the proof of {\bf (i)}.
\end{proof}
\begin{defi}
We call $\mathcal{E}(\cdot,\cdot)=\mathcal{E}(\cdot,\cdot;k,q)$
the nonsymmetric basic hypergeometric function associated
to the triple $(R_0,\Delta_0,\Lambda)$.
\end{defi}
\begin{rema}
As already noted in the proof of Theorem \ref{Ekernel}, 
the definition of 
the nonsymmetric basic hypergeometric function $\mathcal{E}$
differs slightly from the definitions of the kernel functions in
\cite{CM,St2,CWhit}.
With our definition of $\mathcal{E}$ the
connection with meromorphic solutions 
of the bispectral quantum Knizhnik-Zamolodchikov equations will
be more transparent (see Subsection \ref{BqKZsection}). The difference 
between the definitions disappears upon 
symmetrization, cf. Subsection \ref{bhf}.
\end{rema}
Note that $\mathcal{E}(\cdot,\gamma)$ for $\gamma\in T$
such that $G_{k^{d\tau},q}(\gamma)\not=0$ is a meromorphic solution of the
spectral problem
\[
\pi_{k,q}(p(Y))f=(w_0p)(\gamma^{-1})f\qquad \forall\, p\in\mathbb{C}[T].
\]

In view of Theorem \ref{Ekernel} we actually have 
$\mathcal{E}\in\mathcal{V}=\mathcal{V}_{k,q}$,
where
\begin{equation}\label{mathcalS}
\mathcal{V}_{k,q}:=\{f\in\mathbb{K} \,\, | \,\,
\pi_{k,q}^t(Z)f=\pi_{(k^d)^{-1},q^{-1}}^\gamma(\xi(Z))f\quad \forall\,
Z\in\mathbb{H} \}.
\end{equation}
$\mathcal{V}$ is a vectorspace over $\mathbb{F}^{W_0\times W_0}$, with
$\mathbb{F}\subset\mathbb{K}$ the subfield
\begin{equation}\label{F}
\mathbb{F}:=\{f\in\mathbb{K}\,\, | \,\, f(q^\lambda t,q^{\lambda^\prime}\gamma)=
f(t,\gamma)\quad\forall\,\lambda,\lambda^\prime\in\Lambda \}
\end{equation}
of quasiconstant meromorphic functions on $T\times T$. 
\begin{prop}\label{propduality}
{\bf (i)} The involution $\iota$ of $\mathbb{K}$, defined by
$(\iota f)(t,\gamma)=f(\gamma^{-1},t^{-1})$, restricts to a complex
linear isomorphism
\[
\iota|_{\mathcal{V}_{k,q}}: \mathcal{V}_{k,q}\overset{\sim}{\longrightarrow}
\mathcal{V}_{k^d,q}.
\]
{\bf (ii)} The nonsymmetric basic hypergeometric function $\mathcal{E}$ is
selfdual,
\[
\iota\bigl(\mathcal{E}(\cdot,\cdot;k,q)\bigr)=
\mathcal{E}(\cdot,\cdot;k^d,q).
\]
\end{prop}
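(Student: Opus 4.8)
The plan is to deduce (ii) from (i) together with the characterization of $\mathcal{E}$ in Theorem \ref{Ekernel}(ii), and to prove (i) by tracking how the involution $\iota$ transports the two commuting double affine Hecke algebra actions. For part (i) I would factor $\iota=\sigma\circ\mathcal{I}$, where $\sigma$ interchanges the two torus variables, $(\sigma f)(t,\gamma)=f(\gamma,t)$, and $\mathcal{I}=I^t\circ I^\gamma$ inverts both arguments, $I$ being the inversion $(If)(t)=f(t^{-1})$ on $\mathcal{M}(T)$ used in the proof of Theorem \ref{Ekernel}(i). The operator $I$ conjugates $\pi_{k,q}$ into $\pi_{k^{-1},q^{-1}}\circ\eta$ with $\eta$ the isomorphism from that proof, while $\sigma$ converts an action on the first variable into the same action on the second and conversely. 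Combining these, $\iota$ intertwines $\pi_{k,q}^t(Z)$ with $\pi_{k^{-1},q^{-1}}^\gamma(\eta(Z))$ and $\pi_{(k^d)^{-1},q^{-1}}^\gamma(W)$ with $\pi_{k^d,q}^t(\eta_{(k^d)^{-1}}(W))$, using $((k^d)^{-1})^{-1}=k^d$ and $(q^{-1})^{-1}=q$.

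Feeding these two intertwining relations into the defining relation $\pi_{k,q}^t(Z)f=\pi_{(k^d)^{-1},q^{-1}}^\gamma(\xi(Z))f$ of $\mathcal{V}_{k,q}$ turns it, for $g=\iota f$, into the defining relation of $\mathcal{V}_{k^d,q}$ precisely when the algebraic identity $\xi_{k^d}\circ\eta_{(k^d)^{-1}}\circ\xi_k=\eta_k$ holds on $\mathbb{H}(k,q)$ (here I use $(k^d)^d=k$). This identity I would verify on the generators $T_i$, $X^\lambda$, $Y^\lambda$ from the explicit values $\xi(T_i)=T_{i^*}^{-1}$, $\xi(Y^\lambda)=X^{-w_0\lambda}$, $\xi(X^\lambda)=T_{w_0}Y^\lambda T_{w_0}^{-1}$, the two-step involutivity of $\eta$ and of the duality antiisomorphism $\delta$, and the longest-element facts $\mathrm{Ad}(T_{w_0})(T_{s_i})=T_{s_{i^*}}$ and $\delta(T_{w_0})=T_{w_0}^{-1}$. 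Since $\iota^2=\mathrm{id}$, the resulting linear map $\mathcal{V}_{k,q}\to\mathcal{V}_{k^d,q}$ is automatically a bijection, with inverse the corresponding map attached to $k^d$. The main care needed here is the bookkeeping of the longest-element twist $T_{w_0}$ and the index involution $i\mapsto i^*$, which is exactly what makes the two $\mathrm{Ad}(T_{w_0})$-conjugations in $\xi$ cancel correctly.

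For part (ii), applying (i) to $\mathcal{E}(\cdot,\cdot;k,q)\in\mathcal{V}_{k,q}$ immediately gives $\iota(\mathcal{E}(\cdot,\cdot;k,q))\in\mathcal{V}_{k^d,q}$, which is condition (2) of Theorem \ref{Ekernel}(ii) for the multiplicity $k^d$. For condition (1) I would use $(k^d)^\tau=k^{d\tau}$, $(k^d)^{d\tau}=k^\tau$ together with $G(x)=G(x^{-1})$: since $\iota(\mathcal{E}(\cdot,\cdot;k,q))(t,\gamma)=\mathcal{E}(\gamma^{-1},t^{-1};k,q)$, the holomorphy of $G_{k^\tau,q}^{-1}G_{k^{d\tau},q}^{-1}\mathcal{E}(\cdot,\cdot;k,q)$ transports to holomorphy of $G_{(k^d)^\tau,q}^{-1}G_{(k^d)^{d\tau},q}^{-1}\iota(\mathcal{E}(\cdot,\cdot;k,q))$. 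By the uniqueness in Theorem \ref{Ekernel}(ii), conditions (1) and (2) determine the kernel up to a multiplicative constant (the holomorphy condition (1) excludes nontrivial quasiconstant factors), so $\iota(\mathcal{E}(\cdot,\cdot;k,q))=c\,\mathcal{E}(\cdot,\cdot;k^d,q)$ for some $c\in\mathbb{C}^*$.

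It then remains to show $c=1$, and I expect this normalization to be the genuine obstacle rather than the intertwining in (i). Evaluating both sides at the base point $(\gamma_0,\gamma_{0,d})$ of the $k^d$-normalization (3) gives $c=\mathcal{E}(\gamma_{0,d}^{-1},\gamma_0^{-1};k,q)$, which must be reconciled with the known value $\mathcal{E}(\gamma_{0,d},\gamma_0;k,q)=1$. I would compute $c$ from the explicit series of Theorem \ref{Ekernel}(ii), where the symmetries $\Xi(\lambda;k,q)=\Xi(\lambda;k^d,q)$ and $\Xi(-w_0\lambda;k,q)=\Xi(\lambda;k,q)$ from \eqref{muproperties}, the equality $M_{k,q}=M_{k^d,q}$, the invariance $G(x)=G(x^{-1})$, and the relation \eqref{conjugationsymm} tying $E'(\cdot;t^{-1})$ to $\pi_{k,q}(T_{w_0})E(\cdot;t)$ (equivalently the self-duality of the normalized Macdonald--Koornwinder polynomials) should collapse the inverted base-point value to the known normalization. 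The delicate step is precisely that the inversion of the evaluation points must absorb the longest-element twist implicit in $\xi$; should a direct series evaluation prove unwieldy, I would instead route the normalization through the relation $\mathcal{E}(t,\gamma)=\bigl(\pi_{(k^d)^{-1},q^{-1}}^\gamma(T_{w_0})\widetilde{\mathcal{E}}\bigr)(t^{-1},\gamma)$ and the self-duality of the kernel $\widetilde{\mathcal{E}}$ of \cite{CM,St2}.
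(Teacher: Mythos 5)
Your part (i) is essentially the paper's own argument: the paper also conjugates the two double affine Hecke algebra actions through $\iota$ by means of the inversion operator $I$ and the isomorphism $\eta$, and reduces the claim to the generator-level identity that $\eta\circ\xi_k^{-1}\circ\eta_{k^d}$ is the antiisomorphism $\xi$ at dual parameters; your identity $\xi_{k^d}\circ\eta_{(k^d)^{-1}}\circ\xi_k=\eta_k$ is the same statement rearranged, verified from $\eta(T_i)=T_i^{-1}$, $\eta(X^\lambda)=X^{-\lambda}$, $\eta(Y^\lambda)=T_{w_0}Y^{w_0\lambda}T_{w_0}^{-1}$. Your reduction of part (ii) to a single constant is also sound, but for the right reason rather than the one in your parenthesis: since conditions (1), (2), (3) of Theorem \ref{Ekernel} have a solution and determine it uniquely, the $\mathbb{C}$-vector space cut out by (1) and (2) alone must be one-dimensional (otherwise the affine condition (3) would have either no solution or infinitely many). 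The parenthetical claim that holomorphy merely kills quasiconstant factors does not suffice by itself, because $\mathcal{V}_{k^d,q}$ is not one-dimensional over $\mathbb{F}^{W_0\times W_0}$ (by Theorem \ref{relspaces} and Theorem \ref{properties}{\bf (ii)} it has dimension $|W_0|$), so a competing solution of (1) and (2) need not a priori be a quasiconstant multiple of $\mathcal{E}(\cdot,\cdot;k^d,q)$. Your transport of condition (1) via $G(x)=G(x^{-1})$ and $(k^d)^{d\tau}=k^\tau$ is correct.

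The genuine gap is the step you yourself flag: showing $c=\mathcal{E}(\gamma_{0,d}^{-1},\gamma_0^{-1};k,q)=1$. This is not a routine normalization check — it is where the entire content of (ii) sits, and the symmetries you list do not "collapse" the inverted base-point value on their own. The obstruction is precisely the longest-element twist: substituting $(\gamma^{-1},t^{-1})$ into the series of Theorem \ref{Ekernel} and applying \eqref{conjugationsymm} to each of the two inverted-argument polynomial factors (together with \eqref{muproperties} and $M_{k,q}=M_{k^d,q}$) does not give $\mathcal{E}(\cdot,\cdot;k^d,q)$ on the nose, but the twisted identity
\[
\iota\bigl(\mathcal{E}(\cdot,\cdot;k,q)\bigr)=\pi_{k^d,q}^t(T_{w_0})\,\pi_{k^{-1},q^{-1}}^\gamma(T_{w_0})\,\mathcal{E}(\cdot,\cdot;k^d,q),
\]
since \eqref{conjugationsymm} trades each inversion of an argument for an action of $\pi(T_{w_0})$ (one in $t$, one in $\gamma$). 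To finish one must invoke $\xi(T_{w_0})=T_{w_0}^{-1}$: property (2) of Theorem \ref{Ekernel} for $\mathcal{E}(\cdot,\cdot;k^d,q)$ then gives $\pi_{k^d,q}^t(T_{w_0})\mathcal{E}(\cdot,\cdot;k^d,q)=\pi_{k^{-1},q^{-1}}^\gamma(T_{w_0}^{-1})\mathcal{E}(\cdot,\cdot;k^d,q)$, so the two twists cancel. This is exactly the paper's proof of (ii), and note that it produces the equality exactly, with no constant left over — so once your route (a) is actually carried out, the uniqueness scaffolding becomes superfluous. In short, your framework is correct but the postponed step is the theorem: filling it by your route (a) reproduces the paper's computation, while route (b) (selfduality of $\widetilde{\mathcal{E}}$ from \cite{CM,St2}) outsources the same twist-cancellation to the references.
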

\begin{proof}
{\bf (i)} Let $f\in\mathcal{V}_{k,q}$ and set $g:=\iota f$.
Recall the isomorphism $\eta$ from the proof of Theorem \ref{Ekernel}.
Denote $\eta_d$ for the isomorphism $\eta$ with respect to dual
parameters $(k^d,q)$.
Then
\[
\pi_{k^d,q}^t(Z)g=\pi_{k^{-1},q^{-1}}(\widetilde{\xi}(Z))g,\qquad
\forall\,Z\in\mathbb{H}(k^d,q)
\]
with antiisomorphism
$\widetilde{\xi}=\eta^{-1}\circ\xi^{-1}\circ\eta_d:
\mathbb{H}(k^d,q)\rightarrow\mathbb{H}(k^{-1},q^{-1})$. 
Since
$\eta(T_i)=T_i^{-1}$, $\eta(X^\lambda)=X^{-\lambda}$
and $\eta(Y^\lambda)=T_{w_0}Y^{w_0\lambda}T_{w_0}^{-1}$ for $1\leq i\leq n$
and $\lambda
\in\Lambda$ (cf. \cite[Prop. 3.2.2]{C}), 
$\widetilde{\xi}$ is the antiisomorphism
$\xi$ with respect to dual parameters $(k^d,q)$. Hence
$g\in\mathcal{V}_{k^d,q}$.\\
{\bf (ii)} It follows from the explicit
series expansion of $\mathcal{E}$, \eqref{conjugationsymm} and
\eqref{muproperties} that
\[
\iota(\mathcal{E}(\cdot,\cdot;k,q))=
\pi_{k^d,q}^t(T_{w_0})\pi_{k^{-1},q^{-1}}^\gamma(T_{w_0})
\mathcal{E}(\cdot,\cdot;k^d,q).
\]
But this equals $\mathcal{E}(\cdot,\cdot;k^d,q)$ since
$\xi(T_{w_0})=T_{w_0}^{-1}$.
\end{proof}
\subsection{The basic hypergeometric function}\label{bhf}

We first recall some well known facts about symmetric Macdonald-Koornwinder
polynomials from e.g. \cite{C,M,N,Sa,St2}.
For $p\in\mathbb{C}[T]^{W_0}$ we decompose the $q$-difference reflection
operator $\pi_{k,q}(p(Y))$
associated to the central element $p(Y)\in Z(H(k^\bullet))$ as
\[
\pi_{k,q}(p(Y))=
\sum_{w\in W_0}D_{p,w}^{k,q}w_q,\qquad
D_{p,w}^{k,q}\in\mathbb{C}(T)\rtimes_q\tau(\Lambda).
\]
The Macdonald operator $D_p=D_p^{k,q}$
associated to $p$ is defined by
\[
D_p^{k,q}:=\sum_{w\in W_0}D_{p,w}^{k,q}.
\]
The Macdonald operators $D_p$ ($p\in\mathbb{C}[T]^{W_0}$)
are pairwise commuting, $W_0$-equivariant, scalar $q$-difference operators
(see, e.g., \cite[Lem. 2.7]{LS}). 
Explicit expressions of $D_p$ can be given
for special choices of $p\in\mathbb{C}[T]^{W_0}$,
in which case they reduce 
to the original definitions of the Macdonald, Koornwinder
and Ruijsenaars $q$-difference operators from \cite{Mpol},
\cite{Ko} and \cite{R} respectively (see \cite[\S 4.4]{M}).

The idempotent
\[
C_+:=
\frac{1}{\sum_{w\in W_0}k_w^2}\sum_{w\in W_0}k_wT_w\in H(k^\bullet)
\]
satisfies $T_iC_+=k_iC_+=C_+T_i$ for $1\leq i\leq n$
(we do not specify the $k^\bullet$-dependence of $C_+$, it
will always be clear from the context). It follows that
$\pi_{k,q}(C_+): \mathcal{M}(T)\rightarrow\mathcal{M}(T)$ is a
projection operator with image $\mathcal{M}(T)^{W_0}$.
Consequently, if $f\in\mathcal{M}(T)$ satisfies
the $q$-difference reflection equations
\[
\pi_{k,q}(p(Y))f=p(\gamma^{-1})f\qquad \forall\, p\in\mathbb{C}[T]^{W_0}
\]
for some $\gamma\in T$, then $f_+:=\pi_{k,q}(C_+)f\in\mathcal{M}(T)^{W_0}$
satisfies
\[
D_pf_+=p(\gamma^{-1})f_+\qquad \forall\, p\in\mathbb{C}[T]^{W_0}.
\]
In particular, the {\it normalized symmetric Macdonald-Koornwinder polynomial}
\[
E_+(\gamma_\lambda;\cdot):=\pi_{k,q}(C_+)E(\gamma_\lambda;\cdot)\in
\mathbb{C}[T]^{W_0},\qquad \lambda\in\Lambda^-
\]
satisfies
\[
D_p\bigl(E_+(\gamma_\lambda;\cdot)\bigr)=
p(\gamma_\lambda^{-1})E_+(\gamma_\lambda;\cdot)\qquad \forall\, 
p\in\mathbb{C}[T]^{W_0}.
\]
The monic symmetric Macdonald-Koornwinder polynomial
$P_\lambda^+(\cdot)=P_\lambda^+(\cdot;k,q)\in\mathbb{C}[T]^{W_0}$
($\lambda\in\Lambda^-$) is the 
renormalization of $E_+(\gamma_\lambda;\cdot)$ having an expression 
\[
P_\lambda^+(t)=\sum_{\mu\in Q_+}d_\mu t^{w_0\lambda-\mu}
\]
in monomials with leading coefficient $d_0=1$. Then
$E_+(\gamma_\lambda;\cdot)=P_\lambda^+(\gamma_{0,d})^{-1}P_\lambda^+(\cdot)$,
since $E_+(\gamma_\lambda;\gamma_{0,d})=1$. 
Selfduality and the evaluation formula for the symmetric 
Macdonald-Koorn\-win\-der follow from the corresponding results
for the nonsymmetric Macdonald-Koorn\-win\-der polynomials by standard 
symmetrization arguments. Alternatively they can be derived
from the asymptotic analysis of the bispectral quantum Knizhnik-Zamolodchikov
equations, see Remark \ref{consequenceremark}.

Before symmetrizing the nonsymmetric basic hypergeometric function
$\mathcal{E}$, we first introduce and analyze
the natural space it will be contained
in, cf. \cite[Def. 6.13]{vMSt} for $\textup{GL}_m$ 
and \cite[Def. 6.4]{vM} for the reduced case.
\begin{defi}
We set $\mathcal{U}:=\mathcal{U}_{k,q}$
for the $\mathbb{F}$-vector space of meromorphic functions
$f$ on $T\times T$ satisfying
\begin{equation}\label{bisp}
\begin{split}
\bigl(D_p^tf\bigr)(t,\gamma)&=
p(\gamma^{-1})f(t,\gamma)\\
\bigl(\widetilde{D}_p^\gamma f\bigr)(t,\gamma)&=
p(t)f(t,\gamma)
\end{split}
\end{equation}
for all $p\in\mathbb{C}[T]^{W_0}$,
where $\widetilde{D}_p=D_p^{(k^d)^{-1},q^{-1}}$. The superindices
$t$ and $\gamma$ indicate that the $q$-difference operator is acting
on the first and second torus component respectively.
\end{defi}
Note that $\mathcal{U}$ is a $W_0\times W_0$-invariant subspace of
$\mathbb{K}$. 

View 
$\pi_{k,q}^t(C_+)$ and $\pi_{(k^d)^{-1},q^{-1}}^\gamma(C_+)$ as
projection operators on $\mathbb{K}$. Their images are 
$\mathbb{K}^{W_0\times\{1\}}$ and $\mathbb{K}^{\{1\}\times W_0}$
respectively.
\begin{lem}\label{po}
{\bf (i)} 
The restrictions of the projection operators $\pi_{k,q}^t(C_+)$ and
$\pi_{(k^d)^{-1},q^{-1}}^\gamma(C_+)$ to $\mathcal{V}_{k,q}$ coincide, and map into
$\mathcal{U}_{k,q}^{W_0\times W_0}$.\\
{\bf (ii)}  The involution $\iota$ of $\mathbb{K}$ restricts to a
complex linear isomorphism
\[
\iota|_{\mathcal{U}_{k,q}}: \mathcal{U}_{k,q}\overset{\sim}{\longrightarrow}
\mathcal{U}_{k^d,q}.
\]
{\bf (iii)} For all $f\in\mathcal{V}$,
\[
\pi_{k^d,q}^t(C_+)(\iota f)=\iota(\pi_{k,q}^t(C_+)f).
\]
\end{lem}
\begin{proof}
Since $\xi(C_+)=C_+$, the restrictions of $\pi_{k,q}^t(C_+)$
and $\pi_{(k^d)^{-1},q^{-1}}^\gamma(C_+)$ to $\mathcal{V}_{k,q}$ coincide.
Let $f\in\mathcal{V}_{k,q}$ and set $f_+:=\pi_{k,q}^t(C_+)f$.
Since $\xi(p(Y))=p(X^{-1})$ for all $p\in\mathbb{C}[T]^{W_0}$
and since the projection operator $\pi_{k,q}(C_+)$ on $\mathbb{K}$
has range $\mathbb{K}^{W_0\times\{1\}}$, the meromorphic function
$f_+$ satisfies the first set of equations
from \eqref{bisp}.
For the second set of equations of \eqref{bisp} note that 
$f_+$ is $W_0$-invariant in the second torus component
since $f_+=\pi_{(k^d)^{-1},q^{-1}}^\gamma(C_+)f$.
Then for $p\in\mathbb{C}[T]^{W_0}$,
\begin{equation*}
\begin{split}
D_p^{(k^d)^{-1},q^{-1}}f_+&=\pi_{(k^d)^{-1},q^{-1}}^\gamma(C_+p(Y))f\\
&=\pi_{(k^d)^{-1},q^{-1}}^\gamma(C_+)\pi_{(k^d)^{-1},q^{-1}}(T_{w_0}p(Y)T_{w_0}^{-1})f\\
&=\pi_{k,q}^t(p(X))\pi_{(k^d)^{-1},q^{-1}}^\gamma(C_+)f\\
&=\pi_{k,q}^t(p(X))f_+
\end{split}
\end{equation*}
since $\xi(p(X))=T_{w_0}p(Y)T_{w_0}^{-1}$. Hence 
$f_+\in\mathcal{U}_{k,q}^{W_0\times W_0}$, proving {\bf (i)}.
Part {\bf (iii)}
follows from {\bf (i)} and the fact that
\[
\iota\circ\pi_{k,q}^t(C_+)=\pi_{k^{-1},q^{-1}}^\gamma(C_+)
\]
(which in turn follows from the fact that $\eta(C_+)=C_+$).
It remains to prove
{\bf (ii)}. It suffices to show that $I\circ D_p^{k,q}\circ I=
D_p^{k^{-1},q^{-1}}$ for $p\in\mathbb{C}[T]^{W_0}$ as endomorphism of
$\mathcal{M}(T)$, where $(Ig)(t):=g(t^{-1})$. This follows from
$\eta(p(Y))=p(Y)$ for $p\in\mathbb{C}[T]^{W_0}$,
cf. the proof of Proposition \ref{propduality} (see also \cite[Lem. 6.2]{vM}).
\end{proof}
The nonsymmetric basic hypergeometric function $\mathcal{E}$ associated to
$(R_0,\Delta_0,\Lambda)$, being a distinguished element of $\mathcal{V}$,
thus gives rise to a distinguished 
$W_0\times W_0$-symmetric meromorphic solution of the bispectral problem
\eqref{bisp}:
\begin{defi}
We call $\mathcal{E}_+(\cdot,\cdot)=
\mathcal{E}_+(\cdot,\cdot;k,q):=\pi_{k,q}^t(C_+)\mathcal{E}(\cdot,\cdot;k,q)
\in\mathcal{U}^{W_0\times W_0}$
the basic hypergeometric function 
associated to the triple $(R_0,\Delta_0,\Lambda)$.
\end{defi}
In the reduced case $\mathcal{E}_+$ is Cherednik's \cite{CM, CWhit} 
global spherical function. 
In the nonreduced case $\mathcal{E}_+$
was defined by the author in \cite{St2}.

We list the key properties of the basic hypergeometric function 
in the following theorem. 
\begin{thm}\label{QSP}
{\bf (i)} Explicit series expansion,
\[
\mathcal{E}_+(t,\gamma;k,q)=M_{k,q}^{-1}G_{k^\tau,q}(t)G_{k^{d\tau},q}(\gamma)
\sum_{\lambda\in\Lambda^-}\Xi^+(\lambda;k,q)E_+(\gamma_{\lambda,\tau};t;k^\tau,q)
E_+(\gamma_{\lambda,d\tau};\gamma^{-1};k^{d\tau},q)
\]
with $\Xi^+(\lambda;k,q):=\sum_{\nu\in W_0\lambda}\Xi(\nu;k,q)$.
The sum converges normally for $(t,\gamma)$ in compacta
of $T\times T$.\\
{\bf (ii)} Inversion symmetry,
\[
\mathcal{E}_+(t,\gamma;k,q)=\mathcal{E}_+(t^{-1},\gamma^{-1};k,q).
\]
{\bf (iii)} Duality,
\[
\iota\bigl(\mathcal{E}_+(\cdot,\cdot;k,q)\bigr)=
\mathcal{E}_+(\cdot,\cdot;k^d,q).
\]
{\bf (iv)} Reduction to symmetric Macdonald-Koornwinder polynomials, 
\[
\mathcal{E}_+(t,\gamma_\lambda;k,q)=E_+(\gamma_{\lambda_-};t;k,q)\qquad 
\forall\,\lambda\in\Lambda,
\]
with $\lambda_-\in\Lambda^-$ the unique antidominant weight in
the orbit $W_0\lambda$.
\end{thm}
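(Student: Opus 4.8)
The plan is to derive all four statements from the defining relation $\mathcal{E}_+=\pi_{k,q}^t(C_+)\mathcal{E}$, the explicit series for the nonsymmetric kernel $\mathcal{E}$ in Theorem \ref{Ekernel}, and the structural results Lemma \ref{po} and Proposition \ref{propduality}. The core computation is (i); statements (ii) and (iv) then follow from (i) together with standard facts about the symmetric Macdonald--Koornwinder polynomials, while (iii) drops out even more directly.

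For (i) I would substitute the series of Theorem \ref{Ekernel}(ii) into $\mathcal{E}_+=\pi_{k,q}^t(C_+)\mathcal{E}$ and apply the symmetrizer term by term, which is legitimate by normal convergence on compacta. The $\gamma$-factors $\Xi(\lambda)E'(\gamma_{\lambda,d\tau}^{-1};\gamma;k^{d\tau},q)$ are constants for the $t$-action, so the problem reduces to computing $\pi_{k,q}^t(C_+)\bigl(G_{k^\tau,q}(t)E(\gamma_{-w_0\lambda,\tau};t;k^\tau,q)\bigr)$. Here I would invoke the conjugation property $G_{k^\tau,q}ZG_{k^\tau,q}^{-1}=\tau(Z)$ of the Gaussian: since $\tau$ fixes the finite generators $T_1,\dots,T_n$ and the finite scalars $k_w$ ($w\in W_0$), it fixes the symmetrizer $C_+$, so that $\pi_{k,q}^t(C_+)\circ G_{k^\tau,q}(t)=G_{k^\tau,q}(t)\circ\pi_{k^\tau,q}^t(C_+)$ as operators on $\mathcal{M}(T)$. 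This moves the symmetrizer past the Gaussian prefactor and reduces it to $\pi_{k^\tau,q}^t(C_+)E(\gamma_{-w_0\lambda,\tau};\cdot;k^\tau,q)$, which by the definition of $E_+$ (and the standard symmetrization identity for normalized nonsymmetric polynomials off the antidominant chamber) is a scalar multiple of $E_+$ of the antidominant representative. Collecting the contributions of each $W_0$-orbit and matching the scalars against the coefficients $\Xi(\lambda)$ reorganizes the sum over $\Lambda$ into a sum over $\Lambda^-$ with coefficient $\Xi^+(\lambda)=\sum_{\nu\in W_0\lambda}\Xi(\nu)$; the $\gamma$-factor is simultaneously forced into its symmetric form $E_+(\gamma_{\lambda,d\tau};\gamma^{-1};k^{d\tau},q)$ because $\mathcal{E}_+\in\mathcal{U}^{W_0\times W_0}$ is $W_0$-invariant in $\gamma$ by Lemma \ref{po}(i).

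Statement (iii) is then immediate and independent of (i): applying $\iota$ to $\mathcal{E}_+=\pi_{k,q}^t(C_+)\mathcal{E}$, Lemma \ref{po}(iii) gives $\iota(\mathcal{E}_+)=\pi_{k^d,q}^t(C_+)(\iota\mathcal{E})$, and Proposition \ref{propduality}(ii) identifies $\iota\mathcal{E}(\cdot,\cdot;k,q)=\mathcal{E}(\cdot,\cdot;k^d,q)$, whence $\iota(\mathcal{E}_+(\cdot,\cdot;k,q))=\pi_{k^d,q}^t(C_+)\mathcal{E}(\cdot,\cdot;k^d,q)=\mathcal{E}_+(\cdot,\cdot;k^d,q)$. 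For (ii) I would read off inversion symmetry from the series (i): the prefactors are inversion invariant since $G(t)=G(t^{-1})$, and for the two polynomial factors I would use the reflection formula $E_+(\gamma_\lambda;t^{-1})=E_+(\gamma_{-w_0\lambda};t)$ for symmetric Macdonald--Koornwinder polynomials (valid since the evaluation point satisfies $\gamma_{0,d}^{-1}=w_0\gamma_{0,d}\in W_0\gamma_{0,d}$). Reindexing $\lambda\mapsto -w_0\lambda$ on $\Lambda^-$, which preserves $\Xi^+$ by the identity $\Xi(-w_0\lambda;k,q)=\Xi(\lambda;k,q)$ of \eqref{muproperties}, then turns $\mathcal{E}_+(t^{-1},\gamma^{-1})$ back into $\mathcal{E}_+(t,\gamma)$.

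Finally, for (iv) I would use the reduction of the nonsymmetric kernel $\mathcal{E}$ to the normalized nonsymmetric Macdonald--Koornwinder polynomials at the spectral points (the defining feature of $\mathcal{E}$, going back to \cite{CM,St2}): specializing $\gamma$ to $\gamma_\lambda$ makes $\mathcal{E}(\cdot,\gamma_\lambda)$ a normalized nonsymmetric polynomial, after which $\pi_{k,q}^t(C_+)$ produces $E_+$ indexed by the antidominant representative $\lambda_-$, the $w_0$-twists washing out by $W_0$-invariance of the symmetric polynomial; the normalization is fixed by $\mathcal{E}_+(\gamma_{0,d},\gamma_0)=1$ together with the evaluation formula. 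The main obstacle throughout is (i): the parameter bookkeeping with the twists $k^\tau,k^{d\tau}$, the passage from the nonsymmetric $E,E'$ to the symmetric $E_+$ off the antidominant chamber, and the precise matching of orbit-sum scalars that produces exactly $\Xi^+$ is where all the real work sits.
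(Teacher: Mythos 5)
Your overall strategy is the same as the paper's (which only sketches the argument, deferring details to \cite{CM} and \cite{St2}): substitute the series of Theorem \ref{Ekernel}(ii) into $\mathcal{E}_+=\pi_{k,q}^t(C_+)\mathcal{E}$, symmetrize termwise using normal convergence, and use that Hecke-symmetrization of a normalized nonsymmetric polynomial depends only on its $W_0$-orbit. Your Gaussian step is correct and is a detail the paper's sketch leaves implicit: since $\tau(T_i)=T_i$ for $1\leq i\leq n$ and $k^\tau$ agrees with $k$ on $R_0$, indeed $\tau(C_+)=C_+$ and hence $\pi_{k,q}(C_+)\circ G_{k^\tau,q}=G_{k^\tau,q}\circ\pi_{k^\tau,q}(C_+)$. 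Parts (ii) and (iii) also match the paper: (iii) is exactly the paper's stated alternative proof via Proposition \ref{propduality} and Lemma \ref{po}(iii), and (ii) via $G(t)=G(t^{-1})$, the reflection formula $E_+(\gamma_\lambda;t^{-1})=E_+(\gamma_{-w_0\lambda};t)$ and $\Xi^+$-invariance under $\lambda\mapsto-w_0\lambda$ is what the paper does; for (iv) you defer to the literature at essentially the same point the paper does.

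There is, however, a genuine gap in your treatment of (i), precisely at the $\gamma$-side factors. Symmetrizing in $t$ leaves the primed factors $E^\prime(\gamma_{\nu,d\tau}^{-1};\gamma;k^{d\tau},q)$ untouched. The $W_0$-invariance of $\mathcal{E}_+$ in $\gamma$ (Lemma \ref{po}(i)), combined with the orbit-collapse fact applied now to $E^\prime$ in the representation $\pi_{(k^{d\tau})^{-1},q^{-1}}$, only shows that each inner orbit sum equals $\Xi^+(\lambda)\,E^\prime_+(\gamma_{\lambda,d\tau}^{-1};\gamma;k^{d\tau},q)$, where $E^\prime_+:=\pi_{(k^{d\tau})^{-1},q^{-1}}(C_+)E^\prime$ is a $W_0$-invariant eigenfunction of the \emph{inverse-parameter} operators $D_p^{(k^{d\tau})^{-1},q^{-1}}$. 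To reach the stated expansion you must still identify $E^\prime_+(\gamma_{\lambda,d\tau}^{-1};\gamma)$ with $E_+(\gamma_{-w_0\lambda,d\tau};\gamma^{-1})$, and this is exactly \eqref{symminvert}, which the paper derives from \eqref{conjugationsymm} and explicitly singles out as the second key ingredient of the proof of (i). Your appeal to ``$W_0$-invariance forces the symmetric form'' cannot supply this: the needed identity equates two a priori different $W_0$-invariant families attached to the two mutually inverse parameter sets, so invariance alone cannot establish it, and without \eqref{symminvert} you are left with an expansion in the primed symmetric polynomials rather than the one asserted. A smaller but related imprecision: $\pi_{k^\tau,q}(C_+)E(\gamma_{-w_0\nu,\tau};\cdot;k^\tau,q)$ equals $E_+$ of the antidominant representative \emph{exactly} (by the common normalization at $\gamma_{0,d}$), not merely up to a scalar; if the scalars within an orbit were nontrivial, the coefficients would not combine to $\Xi^+(\lambda)=\sum_{\nu\in W_0\lambda}\Xi(\nu)$.
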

\begin{proof} We only sketch the proof.
For detailed proofs see \cite{CM}
in the reduced case and \cite{St2} in the nonreduced case.\\
{\bf (i)} This follows from rather standard symmetrization arguments, using
the fact that $\pi(C_+)E(\gamma_\lambda;\cdot)$
only depends on the orbit $W_0\lambda$ of $\lambda$ and that
\begin{equation}\label{symminvert}
E^\prime_+(\gamma_\lambda^{-1};t)=E_+(\gamma_{-w_0\lambda};t^{-1})
\end{equation}
for $\lambda\in\Lambda^-$, where
$E^\prime_+(\gamma_\lambda^{-1};\cdot):=
\pi_{k^{-1},q^{-1}}(C_+)E^\prime(\gamma_\lambda^{-1};\cdot)$. Formula
\eqref{symminvert} is a direct consequence of \eqref{conjugationsymm}.\\
{\bf (ii)} This follows from {\bf (i)} and the formula
$E_+(\gamma_\lambda;t^{-1})=E_+(\gamma_{-w_0\lambda};t)$ for 
$\lambda\in\Lambda^-$. The latter formula is a consequence of \eqref{symminvert}
and the fact that $E_+^\prime(\gamma_\lambda^{-1};t)=E_+(\gamma_\lambda;t)$
for $\lambda\in\Lambda^-$ (see, e.g., \cite[(5.3.2)]{M}).\\ 
{\bf (iii)} This follows from {\bf (i)} and the self-duality
$\Xi^+(\lambda;k,q)=\Xi^+(\lambda;k^d,q)$ of the
weight $\Xi^+$. Alternatively, use Proposition \ref{propduality} and 
Lemma \ref{po}{\bf (iii)}.\\
{\bf (iv)} This is Cherednik's generalization of the Shintani-Casselman-Shalika
formula in the reduced case (see \cite[(7.13)]{CM},\cite[(3.11)]{CWhit}).
For the nonreduced case, see \cite[Thm. 6.15(d)]{St2}.
\end{proof}

\section{Basic Harish-Chandra series}\label{HCsection}

In this section we generalize and analyze the basic Harish-Chandra
series from \cite{vMSt} ($\textup{GL}_m$ case) and from \cite{vM} 
(reduced case). The basic Harish-Chandra series is a 
$q$-analog of the Harish-Chandra series solution of
the Heckman-Opdam hypergeometric system associated to root systems 
(see \cite[Part I, Chpt. 4]{HS} and references therein). 

Our approach differs from the classical treatment, 
in the sense that we construct, following \cite{vMSt,vM}, the basic
Harish-Chandra series as matrix coefficient of 
a power series solution of a bispectral extension of 
Cherednik's \cite{CKZ,CKZ2} quantum affine Knizhnik-Zamolodchikov 
(KZ) equations associated to the minimal principal series
of $H(k^\bullet)$ (the extension being given by a compatible set of equations
acting on the central character of the minimal principal series representation).
This is essential for two reasons:
\begin{enumerate}
\item Convergence issues:
formal power series solutions of the (bispectral) quantum 
KZ equation are easily seen to converge deep in the asymptotic
sector, in contrast to formal power series
solutions of the spectral problem for the Macdonald $q$-difference operators.
\item The formal power series solution of the bispectral quantum KZ equation
gives rise to a selfdual, globally meromorphic $q$-analog of the classical
Harish-Chandra series. The selfduality plays an important role in our proof
of the $c$-function expansion of the basic hypergeometric function in Section
\ref{c}.
\end{enumerate}
Our approach also gives new proofs of the 
selfduality and the evaluation formula for the symmetric Macdonald-Koornwinder
polynomials (see Remark \ref{consequenceremark}).

\subsection{Bispectral quantum Knizhnik-Zamolodchikov equations}
\label{BqKZsection}
In this subsection we show that the space $\mathcal{V}$ (see \eqref{mathcalS})
is isomorphic to
the space of solutions of a bispectral extension of the quantum affine
Knizhnik-Zamolodchikov (KZ) equations. 

We will first introduce the bispectral quantum KZ equations, following
and extending \cite{vMSt,vM}. 
Tensor products and endomorphism spaces will be over $\mathbb{C}$ unless
stated explicitly otherwise. Let $\chi: R_0\rightarrow\{0,1\}$ 
be the characteristic function of $R_0^-$.
Set $M:=\bigoplus_{w\in W_0}\mathbb{C}v_w$. Define elements
\[
C_{(w,1)}^{k,q}, C_{(1,w)}^{k,q}\in\mathbb{K}\otimes
\textup{End}(M)
\]
for the generators $w=s_i$, $w=u(\lambda)$ ($0\leq i\leq n$ and 
$\lambda\in\Lambda_{min}^+$)
of the extended affine Weyl group $W$ by 
\begin{equation*}
\begin{split}
C_{(s_0,1)}^{k,q}(t,\gamma)v_w&:=
\frac{\gamma^{-w^{-1}\theta}v_{s_\theta w}}{k_0c_0(t;k^{-1},q)}+
\left(\frac{c_0(t;k^{-1},q)-k_0^{-2\chi(w^{-1}\theta)}}
{c_0(t;k^{-1},q)}\right)v_w,\\
C_{(s_i,1)}^{k,q}(t,\gamma)v_w&:=\frac{v_{s_iw}}{k_ic_i(t;k^{-1},q)}+
\left(
\frac{c_i(t;k^{-1},q)-k_i^{-2\chi(-w^{-1}\alpha_i)}}{c_i(t;k^{-1},q)}\right)v_w,\\
C_{(u(\lambda),1)}^{k,q}(t,\gamma)v_w&:=\gamma^{w^{-1}w_0\lambda}v_{v(\lambda)^{-1}w}
\end{split}
\end{equation*}
for $1\leq i\leq n$, $\lambda\in\Lambda_{min}^+$ and $w\in W_0$, 
where $v(\lambda)\in W_0$ is the element of minimal length
such that $v(\lambda)\lambda\in\Lambda^-$, and 
\begin{equation*}
\begin{split}
C_{(1,s_0)}^{k,q}(t,\gamma)v_w&:=
\frac{t^{w\theta}v_{ws_\theta}}{k_0^dc_0(\gamma^{-1};(k^d)^{-1},q)}+
\left(\frac{c_0(\gamma^{-1};(k^d)^{-1},q)-(k_0^d)^{-2\chi(w\theta)}}
{c_0(\gamma^{-1};(k^d)^{-1},q)}\right)v_w,\\
C_{(1,s_i)}^{k,q}(t,\gamma)v_w&:=\frac{v_{ws_i}}
{k_i^dc_i(\gamma^{-1};(k^d)^{-1},q)}+
\left(
\frac{c_i(\gamma^{-1};(k^d)^{-1},q)-(k_i^d)^{-2\chi(-w\alpha_i)}}
{c_i(\gamma^{-1};(k^d)^{-1},q)}\right)v_w,\\
C_{(1,u(\lambda))}^{k,q}(t,\gamma)v_w&:=t^{-ww_0\lambda}v_{wv(\lambda)}.
\end{split}
\end{equation*}
The following theorem is \cite[Cor. 3.4 \& Lem. 4.3]{vMSt} 
in the $\textup{GL}_m$-case
and \cite[Cor. 3.8 \& Lem. 4.3]{vM} 
in the reduced case. The extension to the nonreduced case
is straightforward.
\begin{thm}
There exists a unique left $W\times W$-action 
$((w_1,w_2),g)\mapsto\nabla^{k,q}((w_1,w_2))g$
on $\mathbb{K}\otimes M$
satisfying
\begin{equation*}
\begin{split}
\nabla^{k,q}(w,1)g&=C_{(w,1)}^{k,q}w_q^tg\\
\nabla^{k,q}(1,w)g&=C_{(1,w)}^{k,q}w_{q^{-1}}^\gamma g
\end{split}
\end{equation*}
for $g\in\mathbb{K}$,
$w=s_j$ ($0\leq j\leq n$) and $w=u(\lambda)$ ($\lambda\in\Lambda_{min}^+$),
where
\[(w_q^tg)(t,\gamma)=g(w_q^{-1}t,\gamma),\qquad
(w_{q^{-1}}^\gamma g)(t,\gamma)=g(t,w_{q^{-1}}^{-1}\gamma).
\]
\end{thm}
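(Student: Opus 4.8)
The plan is to reduce the construction to verifying the defining relations of $W\times W$ on generators. Since $W\cong\Omega\ltimes W^\bullet$ with $W^\bullet$ the Coxeter group on $s_0,\dots,s_n$ and $\Omega=\{u(\lambda)\mid\lambda\in\Lambda_{min}^+\}$ acting by diagram automorphisms, the group $W\times W$ is generated by the elements $(s_i,1)$, $(1,s_i)$ $(0\le i\le n)$ and $(u(\lambda),1)$, $(1,u(\lambda))$ $(\lambda\in\Lambda_{min}^+)$. Uniqueness is therefore immediate: an action is determined by its values on generators. For existence I would set $\nabla^{k,q}(s_i,1):=C_{(s_i,1)}^{k,q}(s_i)_q^t$ and $\nabla^{k,q}(u(\lambda),1):=C_{(u(\lambda),1)}^{k,q}u(\lambda)_q^t$, with the analogous definitions in the second factor, and verify that these operators satisfy the Coxeter relations of $W^\bullet$, the semidirect-product relations $\nabla(u,1)\nabla(s_i,1)=\nabla(s_{u(i)},1)\nabla(u,1)$ for $u\in\Omega$, the corresponding relations for the $(1,s_i)$ and $(1,u(\lambda))$ generators, and the mutual commutativity $\nabla(w,1)\nabla(1,w')=\nabla(1,w')\nabla(w,1)$.

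Writing $\nabla(w,1)=C_{(w,1)}^{k,q}\,w_q^t$, each such relation becomes a cocycle identity for the $\textup{End}(M)$-valued coefficients, since $w_q^t$ acts on the $\mathbb{K}$-entries. The involutivity $\nabla(s_i,1)^2=\mathrm{id}$ is equivalent to $C_{(s_i,1)}^{k,q}\cdot(s_i)_q^t\bigl(C_{(s_i,1)}^{k,q}\bigr)=\mathrm{id}$, which by the definition of $C_{(s_i,1)}^{k,q}$ and the relation $c_a((s_i)_q^{-1}t)=c_{s_ia}(t)$ (so that $c_{a_i}((s_i)_q^{-1}t;k^{-1},q)=c_{-a_i}(t;k^{-1},q)$) collapses to the rank-one functional equation for the $c$-functions \eqref{ca}, the diagonal $k_i^{-2\chi(\cdot)}$ terms accounting for the sign of $w^{-1}\alpha_i$. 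The $\Omega$-compatibility relations are purely combinatorial: $C_{(u(\lambda),1)}^{k,q}$ acts on $M$ by the permutation $v_w\mapsto v_{v(\lambda)^{-1}w}$ up to a $\gamma$-monomial, so conjugating $\nabla(s_i,1)$ by $\nabla(u(\lambda),1)$ relabels the simple reflection according to the diagram automorphism $i\mapsto u(i)$, exactly as required.

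The substantive content lies in the braid relations within each Coxeter factor; these are the rank-two consistency (flatness) identities of the bispectral quantum KZ connection. In the $\textup{GL}_m$ case they are \cite[Cor.\ 3.4 \& Lem.\ 4.3]{vMSt} and in the reduced case \cite[Cor.\ 3.8 \& Lem.\ 4.3]{vM}, which I would quote directly. To pass to the nonreduced $C^\vee C_n$ case I would note that the coefficients $C_{(s_i,1)}^{k,q}$ involve the multiplicity only through $c_{a_i}(t;k^{-1},q)$, so that the two extra Koornwinder parameters $k_{2a_0}$ and $k_{2\theta}$ enter solely through $c_0$ and $c_n$; every braid relation is thus a rank-two identity attached to a pair of simple affine roots, and the required $c$-function identities are precisely those guaranteeing that Cherednik's and Noumi's representation $\pi_{k,q}$ of $H(k^\bullet)$ exists in the nonreduced case (together with the duality antiisomorphism $\xi$ of Theorem~\ref{Ekernel}). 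The manipulations that establish the reduced case therefore apply verbatim once the full $c_a$, including the $k_{2a}$ factor, is inserted. \emph{The main obstacle is exactly these rank-two braid relations involving $s_0$ and $s_n$}, where $k_{2a_0},k_{2\theta}$ appear nontrivially; I would dispatch them by evaluating on each basis vector $v_w$, clearing the common $c$-function denominator, and recognizing the numerator identity as the standard rank-two Koornwinder/Askey-Wilson relation already underlying $\pi_{k,q}$.

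Finally, the mutual commutativity of the two factors is structural: $C_{(w,1)}^{k,q}$ acts on $M$ through the \emph{left} regular action $v_{w'}\mapsto v_{(\cdot)w'}$ while $C_{(1,w')}^{k,q}$ acts through the \emph{right} regular action $v_{w'}\mapsto v_{w'(\cdot)}$, and left and right actions commute; moreover $C_{(w,1)}^{k,q}$ depends on $\gamma$, and $C_{(1,w')}^{k,q}$ on $t$, only through monomial prefactors, on which $w_q^t$ and $(w')_{q^{-1}}^\gamma$ act by characters, while the reflections $w_q^t$ and $(w')_{q^{-1}}^\gamma$ commute as difference operators in disjoint variables. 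Commutativity then reduces to a short bookkeeping check on these characters and permutations. Assembling the verified relations yields the desired left $W\times W$-action, and uniqueness has already been noted.
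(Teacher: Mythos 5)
Your proposal is correct and follows essentially the same route as the paper: the paper's entire proof consists of citing \cite[Cor.~3.4 \& Lem.~4.3]{vMSt} for the $\textup{GL}_m$ case and \cite[Cor.~3.8 \& Lem.~4.3]{vM} for the reduced case, and asserting that the extension to the nonreduced case is straightforward, which is precisely what your plan does while spelling out the generators-and-relations scaffolding (including the rank-one $c$-function identities that also underlie the commutativity of the two factors, a check slightly less trivial than pure bookkeeping but of the same elementary type). Your observation that the extra Koornwinder parameters enter only through $c_0$ and $c_n$ is exactly the reason the paper can call the nonreduced extension straightforward.
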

We say that $g\in\mathbb{K}\otimes M$ satisfies the bispectral
quantum Knizhnik-Zamolodchikov equations if $g$ is a solution of the
compatible system
\begin{equation}\label{BqKZ}
\nabla(\tau(\lambda),\tau(\lambda^\prime))g=g\qquad \forall\, 
(\lambda,\lambda^\prime)\in\Lambda\times\Lambda
\end{equation}
of $q$-difference equations. Restricting the equations
\eqref{BqKZ} to $\Lambda\times\{0\}$
and fixing the second torus variable $\gamma\in T$ gives, in the reduced case,
Cherednik's \cite{CKZ,CKZ2} quantum affine KZ equation associated to the minimal
principal series representation of $H(k^\bullet)$ with central character 
$\gamma$.
\begin{defi}
We write $\mathcal{K}=\mathcal{K}_{k,q}$ 
for the $\mathbb{F}$-vector space consisting of 
$g\in\mathbb{K}\otimes M$ satisfying the bispectral
quantum KZ equations \eqref{BqKZ}. 
\end{defi}
Note that $\mathcal{K}$ is a
$W_0\times W_0$-module, with action the restriction of $\nabla$
to $W_0\times W_0$. 

Let $\sigma$ be the complex linear automorphism of 
$\mathbb{K}\otimes M$
defined by 
\[
\sigma(f\otimes v_w):=\iota f\otimes v_{w^{-1}}.
\]
Then
\begin{equation}\label{duality}
\sigma\circ\nabla^{k,q}((w,w^\prime))=\nabla^{k^d,q}((w^\prime,w))\circ\sigma\qquad
\forall\, w,w^\prime\in W.
\end{equation}
In particular, $\sigma$ restricts to a complex linear isomorphism
$\mathcal{K}_{k,q}\overset{\sim}{\longrightarrow}\mathcal{K}_{k^d,q}$.

Define complex linear maps
\begin{equation*}
\begin{split}
\psi:=\psi_{k,q}:\, &\mathbb{K}\rightarrow \mathbb{K}\otimes M,\\
\phi:=\phi_{k,q}:\, &\mathbb{K}\otimes M\rightarrow\mathbb{K}
\end{split}
\end{equation*}
by 
\begin{equation*}
\begin{split}
&\psi(f):=\sum_{w\in W_0}(\pi^t_{k,q}(T_{ww_0})f)\otimes v_w,\\
&\phi\bigl(\sum_{w\in W_0}f_w\otimes v_w\bigr):=\frac{k_{w_0}^{-1}}
{\sum_{w\in W_0}k_w^{-2}}\sum_{w\in W_0}k_w^{-1}f_w.
\end{split}
\end{equation*} 
Note that $\phi\circ\psi=\pi^t_{k,q}(C_+)$. Recall from Lemma \ref{po}
that $\pi^t_{k,q}(C_+)$ restricts to a complex linear map
$\pi^t_{k,q}(C_+): \mathcal{V}\rightarrow\mathcal{U}^{W_0\times W_0}$.
It factorizes through the solutions space $\mathcal{K}^{W_0\times W_0}$
of the bispectral quantum KZ equations:
\begin{thm}\label{relspaces}
{\bf (i)} $\psi$ restricts to a $\mathbb{F}^{W_0\times W_0}$-linear
isomorphism $\psi: \mathcal{V}\overset{\sim}{\longrightarrow}
\mathcal{K}^{W_0\times W_0}$.\\
{\bf (ii)} $\phi$ restricts to an injective $W_0\times W_0$-equivariant
$\mathbb{F}$-linear map
$\phi: \mathcal{K}\hookrightarrow \mathcal{U}$.\\
{\bf (iii)} $\psi_{k^d,q}\circ\iota|_{\mathcal{V}_{k,q}}=
\sigma\circ\psi_{k,q}|_{\mathcal{V}_{k,q}}$ and 
$\phi_{k^d,q}\circ\sigma|_{\mathcal{K}_{k,q}^{W_0\times W_0}}=\iota\circ
\phi_{k,q}|_{\mathcal{K}_{k,q}^{W_0\times W_0}}$.
\end{thm}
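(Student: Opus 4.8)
The plan is to follow the strategy of \cite[\S 3--4]{vMSt} (for $\textup{GL}_m$) and \cite[\S 3--4]{vM} (reduced case) and to check that the underlying computations survive verbatim in the present uniform setting; the only genuinely new input is the bookkeeping of the exponents $k_i^{-2\chi(\cdot)}$ in the $C$-matrices, which absorb the nonreduced data. The conceptual point is that $\psi$ transports the defining relations $\pi^t_{k,q}(Z)f=\pi^\gamma_{(k^d)^{-1},q^{-1}}(\xi(Z))f$ of $\mathcal{V}$ \eqref{mathcalS} into the $\nabla$-description of $\mathcal{K}^{W_0\times W_0}$. I would open with the decisive elementary observation that $T_{w_0w_0}=T_e=1$, so the coefficient of $v_{w_0}$ in $\psi(f)$ is $f$ itself. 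Hence ``extract the $v_{w_0}$-coefficient'' is a left inverse of $\psi$; in particular $\psi$ is automatically injective, and all the content of {\bf (i)} lies in identifying its image.

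For {\bf (i)}, I would first prove $\psi(\mathcal{V})\subseteq\mathcal{K}^{W_0\times W_0}$. Solving $\pi_{k,q}(T_i)=k_i+k_i^{-1}c_{a_i}(s_{i,q}-1)$ for $s_{i,q}$ and substituting into $\nabla^{k,q}(s_i,1)=C^{k,q}_{(s_i,1)}(s_i)^t_q$, I match its action on $\psi(f)$ coefficient-by-coefficient against the Hecke relations among the components $f_w=\pi^t_{k,q}(T_{ww_0})f$; the quadratic and braid relations then force $\nabla^{k,q}(s_i,1)\psi(f)=\psi(f)$, and symmetrically $\nabla^{k,q}(1,s_i)\psi(f)=\psi(f)$ from the $\gamma$-side relations defining $\mathcal{V}$, giving $W_0\times W_0$-invariance. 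The translation equations \eqref{BqKZ} then drop out of the defining relations of $\mathcal{V}$ applied to $Z=Y^\lambda$ together with $\xi(Y^\lambda)=X^{-w_0\lambda}$ (Theorem \ref{Ekernel}). Conversely, given $g\in\mathcal{K}^{W_0\times W_0}$ I set $f$ to be its $v_{w_0}$-coefficient; reading the invariance equations forwards recovers $g_w=\pi^t_{k,q}(T_{ww_0})f$, so $\psi(f)=g$, while reading them backwards shows $f\in\mathcal{V}$. This yields the asserted isomorphism.

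For {\bf (ii)}, $\phi$ is the difference Cherednik--Matsuo map of \cite{CKZ2}. That $\phi(\mathcal{K})\subseteq\mathcal{U}$ is exactly the Cherednik--Matsuo correspondence: the symmetric combination $\phi(g)$ of a solution $g$ of the bispectral quantum KZ equations satisfies the Macdonald $q$-difference eigenvalue equations \eqref{bisp} in both variables. The $W_0\times W_0$-equivariance is a direct computation in which the weights $k_w^{-1}$ in the definition of $\phi$ are precisely calibrated so that, upon applying $C^{k,q}_{(s_i,1)}$ and collecting the coefficient of each $v_u$, the $c$-function factors telescope and $\phi(\nabla^{k,q}(s_i,1)g)=(s_i)^t_q\phi(g)$, with the analogous identity on the $\gamma$-side. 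Injectivity is the inverse Cherednik--Matsuo statement: the equations \eqref{BqKZ} allow one to reconstruct every component $g_w$ from the single symmetric function $\phi(g)$, so $\phi(g)=0$ forces $g=0$.

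For {\bf (iii)} I would deduce the first identity from {\bf (i)} rather than compute it. By \eqref{duality}, $\sigma$ carries $\mathcal{K}^{W_0\times W_0}_{k,q}$ into $\mathcal{K}^{W_0\times W_0}_{k^d,q}$, so $\psi_{k^d,q}(\iota f)$ and $\sigma\psi_{k,q}(f)$ both lie in $\mathcal{K}^{W_0\times W_0}_{k^d,q}$; using $w_0^{-1}=w_0$ their $v_{w_0}$-coefficients are both $\iota f$, and since by {\bf (i)} an element of $\mathcal{K}^{W_0\times W_0}$ is determined by its $v_{w_0}$-coefficient, the two coincide. The second identity is a short direct computation: the finite multiplicities are unchanged under $k\mapsto k^d$, so $k^d_w=k_w$ for $w\in W_0$, and the inversion-set bijection $\alpha\mapsto-w\alpha$ gives $k_{w^{-1}}=k_w$; feeding these into $\phi_{k^d,q}(\sigma g)$ and reindexing $w\mapsto w^{-1}$ reproduces $\iota\phi_{k,q}(g)$. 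The main obstacle is the coefficient-by-coefficient matching in the first half of {\bf (i)} and the reconstruction step in {\bf (ii)}: these are precisely where the nonreduced $C$-matrix entries, with their $\chi$-dependent exponents, must be verified to conspire exactly as in the reduced case treated in \cite{vMSt,vM}.
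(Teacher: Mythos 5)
Your opening observation (the $v_{w_0}$-coefficient of $\psi(f)$ is $f$ itself, since $T_{w_0w_0}=T_e=1$, so $\psi$ is automatically injective) is correct and is a good organizing principle, and your part {\bf (iii)} is both correct and genuinely different from the paper's proof. The paper proves the first identity of {\bf (iii)} by the explicit computations $\eta(T_{ww_0})=T_{w_0w^{-1}}^{-1}$ and $\xi(T_{ww_0})=T_{w_0w}^{-1}$, and then deduces the second identity from the first together with Lemma \ref{po}{\bf (iii)}; you instead obtain the first identity abstractly from {\bf (i)} (both sides lie in $\mathcal{K}_{k^d,q}^{W_0\times W_0}$, by Proposition \ref{propduality}{\bf (i)} and \eqref{duality}, and share the $v_{w_0}$-coefficient $\iota f$, which by {\bf (i)} determines them), and you prove the second by a direct reindexing using $k^d_w=k_w$ and $k_{w^{-1}}=k_w$ — an argument that in fact works for arbitrary elements of $\mathbb{K}\otimes M$, not just for $\mathcal{K}^{W_0\times W_0}$. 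Both are sound. As for {\bf (i)}, the paper simply cites \cite[Thm. 4.9]{StKZ} and declares the nonreduced and bispectral extensions straightforward, so your coefficient-matching sketch is a reconstruction of what that reference does; be aware, though, that invariance must be checked on all generators of $W\times W$ — including $(s_0,1)$, $(1,s_0)$ and the $\Omega$-part $(u(\lambda),1)$, $(1,u(\lambda))$ — and that the $\gamma$-side translation equations come from the relations with $Z=X^\lambda$ (via $\xi(X^\lambda)=T_{w_0}Y^\lambda T_{w_0}^{-1}$), not only from $Z=Y^\lambda$.

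The genuine gap is the injectivity of $\phi|_{\mathcal{K}}$ in {\bf (ii)}. You dispose of it with the assertion that ``the equations \eqref{BqKZ} allow one to reconstruct every component $g_w$ from the single symmetric function $\phi(g)$.'' That reconstruction is precisely the hard content of the (bispectral, nonreduced) difference Cherednik--Matsuo correspondence; it is not a formal consequence of the equations, and you give no indication of how it would go. Note that $\phi$ has an enormous kernel on $\mathbb{K}\otimes M$, so any injectivity proof must exploit the qKZ structure quantitatively; moreover the obvious reduction to {\bf (i)} fails: if $g\in\mathcal{K}$ and $\phi(g)=0$, equivariance only yields $\phi(\nabla(w_1,w_2)g)=0$ for all $(w_1,w_2)\in W_0\times W_0$, and the $W_0\times W_0$-symmetrization of $g$ may vanish, so one cannot pass to $\mathcal{K}^{W_0\times W_0}$, where $\phi=\pi^t_{k,q}(C_+)\circ\psi^{-1}$ is under control. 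The paper's actual route is through the asymptotic analysis recalled in Subsection \ref{Abqkz}: $\{\nabla(1,w)F\}_{w\in W_0}$ is an $\mathbb{F}$-basis of $\mathcal{K}$ (Theorem \ref{properties}{\bf (ii)}), equivariance gives $\phi(\nabla(1,w)F)=\Phi(\cdot,w^{-1}\cdot)$, and these functions are $\mathbb{F}$-linearly independent because their prefactors $\mathcal{W}(t,w^{-1}\gamma)$ satisfy, for generic $\gamma$, distinct quasi-periodicity equations \eqref{frW} in $t$. To close your argument you must either invoke this asymptotic input (as the paper does, deferring to \cite[Thm. 6.16, Cor. 6.21]{vMSt} and \cite[Thm. 6.6]{vM}), or genuinely carry out the reconstruction by extending the techniques of \cite[\S 5]{StKZ} to the bispectral, nonreduced setting — something the paper explicitly flags as a separate alternative approach, not a one-line remark.
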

\begin{proof}
{\bf (i)} The analogous statement in the reduced case for the 
usual quantum affine
KZ equations was proved in \cite[Thm. 4.9]{StKZ}. 
Its extension to the nonreduced
case is straightforward. The bispectral extension 
follows by a repetition of the arguments for the dual part of the quantum
KZ equations (i.e. the part acting on the second torus component).\\
{\bf (ii)} This is the bispectral extension of the difference
Cherednik-Matsuo correspondence
\cite[Thm. 3.4(a)]{CKZ}. See \cite[Thm. 6.16 \& Cor. 6.21]{vMSt} for the 
$\textup{GL}_m$-case and
\cite[Thm. 6.6]{vM} for the reduced case (the injectivity follows from
the asymptotic analysis of the bispectral quantum KZ equations, which
we will also recall in Subsection \ref{Abqkz}). The extension to the nonreduced
case is straightforward. An alternative approach is to extend the techniques
from \cite[\S 5]{StKZ} to the present bispectral (and nonreduced) setting.\\
{\bf (iii)} Using $\eta(T_{ww_0})=T_{w_0w^{-1}}^{-1}$
for $w\in W_0$ it follows that
\[
\psi_{k^d,q}(\iota f)=\sigma\bigl(\sum_{w\in W_0}
\pi_{(k^d)^{-1},q^{-1}}^\gamma(T_{w_0w}^{-1})f\otimes v_w\bigr)
\]
for $f\in\mathbb{K}$. The first part then follows from the observation that
\[
\psi_{k,q}(f)=\sum_{w\in W_0}\pi_{(k^d)^{-1},q^{-1}}^\gamma(T_{w_0w}^{-1})f\otimes v_w
\]
if $f\in\mathcal{V}_{k,q}$, 
since $\xi(T_{ww_0})=T_{w_0w}^{-1}$ for $w\in W_0$. For the second equality
let $f\in\mathcal{K}^{W_0\times W_0}$ and set $g=\psi^{-1}(f)\in\mathcal{V}$.
Then
\begin{equation*}
\begin{split}
\phi(\sigma(f))&=\pi^t(C_+)\psi^{-1}(\sigma(f))\\
&=\pi^t(C_+)(\iota g)\\
&=\iota(\pi^t(C_+)g)\\
&=\iota(\phi(f)),
\end{split}
\end{equation*}
where we use the first part of {\bf (ii)} for the second equality
and Lemma \ref{po}{\bf (iii)} for the third equality.
\end{proof}

\begin{cor}\label{imagephi}
$\mathcal{E}_+\in\phi(\mathcal{K}^{W_0\times W_0})$.
\end{cor}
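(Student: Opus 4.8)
The plan is to read the statement off directly from the factorization $\phi\circ\psi=\pi^t_{k,q}(C_+)$ together with Theorem \ref{relspaces}(i), so that no new analysis is required. First I would recall the two inputs already in place: that the nonsymmetric basic hypergeometric function satisfies $\mathcal{E}\in\mathcal{V}=\mathcal{V}_{k,q}$ (established in view of Theorem \ref{Ekernel}(ii)), and that by definition $\mathcal{E}_+=\pi^t_{k,q}(C_+)\mathcal{E}$. Next I would invoke Theorem \ref{relspaces}(i), which asserts that $\psi$ restricts to an $\mathbb{F}^{W_0\times W_0}$-linear isomorphism $\psi:\mathcal{V}\overset{\sim}{\longrightarrow}\mathcal{K}^{W_0\times W_0}$; in particular $\psi(\mathcal{E})\in\mathcal{K}^{W_0\times W_0}$. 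Finally, using the identity $\phi\circ\psi=\pi^t_{k,q}(C_+)$ noted just before Theorem \ref{relspaces}, I would compute
\[
\mathcal{E}_+=\pi^t_{k,q}(C_+)\mathcal{E}=\phi\bigl(\psi(\mathcal{E})\bigr)\in\phi\bigl(\mathcal{K}^{W_0\times W_0}\bigr),
\]
which is exactly the asserted membership.

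Since both nontrivial ingredients—the membership $\mathcal{E}\in\mathcal{V}$ and the fact that $\pi^t_{k,q}(C_+)$ factors as $\phi\circ\psi$ with $\psi$ landing in $\mathcal{K}^{W_0\times W_0}$—are already secured, there is no genuine obstacle here; the corollary is a purely formal consequence of Theorem \ref{relspaces}. The one point worth making explicit is that $\psi(\mathcal{E})$ is automatically $W_0\times W_0$-invariant as an element of $\mathcal{K}$, but this is built into the codomain of the isomorphism in Theorem \ref{relspaces}(i), so no separate symmetrization step on the $\mathcal{K}$-side is needed. The substance of the matter has thus been relegated to the earlier identification of $\mathcal{V}$ with $\mathcal{K}^{W_0\times W_0}$ and to the difference Cherednik--Matsuo map $\phi$; this corollary merely records that $\mathcal{E}_+$ lies in the image of $\phi$, which is the observation that makes the asymptotic analysis of the bispectral quantum KZ equations applicable to $\mathcal{E}_+$ in the sequel.
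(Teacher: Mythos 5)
Your proof is correct and is essentially identical to the paper's: the paper also writes $\mathcal{E}_+=\pi^t(C_+)\mathcal{E}=\phi(\psi(\mathcal{E}))$ and notes that $\psi(\mathcal{E})\in\mathcal{K}^{W_0\times W_0}$ since $\mathcal{E}\in\mathcal{V}$, invoking Theorem \ref{relspaces}. Your additional remark about the $W_0\times W_0$-invariance being built into the codomain of $\psi$ is a harmless elaboration of the same argument.
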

\begin{proof}
$\mathcal{E}_+=\pi^t(C_+)\mathcal{E}=\phi(\psi(\mathcal{E}))$
and $\psi(\mathcal{E})\in\mathcal{K}^{W_0\times W_0}$ since
$\mathcal{E}\in\mathcal{V}$.
\end{proof}

\subsection{Asymptotically free solutions of the bispectral quantum KZ
equations}\label{Abqkz}

We recall the results on asymptotically free solutions of the bispectral
quantum KZ equations from \cite{vMSt} ($\textup{GL}_m$ case) and
\cite{vM} (reduced case). The extension to the nonreduced case presented
here follows from straightforward adjustments of the arguments of
\cite{vMSt,vM}.

Define $\mathcal{W}(\cdot,\cdot)=\mathcal{W}(\cdot,\cdot;k,q)\in\mathbb{K}$ by 
\[
\mathcal{W}(t,\gamma):=\frac{\vartheta(t(w_0\gamma)^{-1})}
{\vartheta(\gamma_0t)\vartheta(\gamma_{0,d}^{-1}\gamma)}.
\]
There is some flexibility in the choice of $\mathcal{W}(\cdot,\cdot)$.
The key properties we need it 
to satisfy, are the functional equations
\begin{equation}\label{frW}
\mathcal{W}(q^\lambda t,\gamma)=
\gamma_0^\lambda\gamma^{w_0\lambda}\mathcal{W}(t,\gamma),\qquad
\lambda\in\Lambda
\end{equation}
and the selfduality property
\[
\iota\bigl(\mathcal{W}(\cdot,\cdot;k,q)\bigr)=
\mathcal{W}(\cdot,\cdot;k^d,q).
\]
For $\epsilon>0$ set 
\[
B_\epsilon:=\{t\in T \,\, | \,\,
|t^{\alpha_i}|<\epsilon \quad\forall\,i\in\{1,\ldots,n\}\}
\]
and $B_\epsilon^{-1}:=\{t^{-1}\,\, | \,\, t\in B_\epsilon\}$.
\begin{thm}\label{AF}
There exists a unique $F(\cdot,\cdot)=
F(\cdot,\cdot;k,q)\in\mathcal{K}_{k,q}$ such that $F(t,\gamma)=
\mathcal{W}(t,\gamma)H(t,\gamma)$ with
$H(\cdot,\cdot)=H(\cdot,\cdot;k,q)\in\mathbb{K}\otimes M$ 
satisfying for $\epsilon>0$ sufficiently
small,
\[
H(t,\gamma)=\sum_{\mu,\nu\in Q_+}H_{\mu,\nu}t^{-\mu}\gamma^\nu\quad
(H_{\mu,\nu}\in M),\qquad H_{0,0}=v_{w_0}
\]
for $(t,\gamma)\in B_\epsilon^{-1}\times B_\epsilon$, with the series
converging normally for $(t,\gamma)$ in compacta of
$B_\epsilon^{-1}\times B_\epsilon$. 
\end{thm}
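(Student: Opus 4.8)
The plan is to construct $F$ by the $q$-difference analogue of Frobenius' method: I would solve the bispectral quantum KZ equations \eqref{BqKZ} order by order in the asymptotic sector $B_\epsilon^{-1}\times B_\epsilon$ after gauging away the prefactor $\mathcal{W}$. Writing the action of $\nabla^{k,q}$ on a translation $(\tau(\lambda),\tau(\lambda^\prime))$ as multiplication by a cocycle matrix $C_{(\tau(\lambda),\tau(\lambda^\prime))}^{k,q}(t,\gamma)\in\mathbb{K}\otimes\textup{End}(M)$ (a composition of the elementary matrices $C_{(s_j,1)}^{k,q}$, $C_{(1,s_j)}^{k,q}$, $C_{(u(\mu),1)}^{k,q}$, $C_{(1,u(\mu))}^{k,q}$) followed by the shift $(t,\gamma)\mapsto(q^{-\lambda}t,q^{\lambda^\prime}\gamma)$, equation \eqref{BqKZ} reads
\[
C_{(\tau(\lambda),\tau(\lambda^\prime))}^{k,q}(t,\gamma)\,F(q^{-\lambda}t,q^{\lambda^\prime}\gamma)=F(t,\gamma).
\]
Substituting $F=\mathcal{W}H$ and dividing by $\mathcal{W}(t,\gamma)$ turns the scalar ratio $\mathcal{W}(q^{-\lambda}t,q^{\lambda^\prime}\gamma)/\mathcal{W}(t,\gamma)$ into an explicit monomial in $(t,\gamma)$; by the functional equation \eqref{frW} and its $\gamma$-companion (which follows from the selfduality $\iota(\mathcal{W}(\cdot,\cdot;k,q))=\mathcal{W}(\cdot,\cdot;k^d,q)$) this monomial is designed to cancel the leading exponential growth of the cocycle. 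What survives is a gauged equation $\widetilde{C}^{k,q}(t,\gamma)\,H(q^{-\lambda}t,q^{\lambda^\prime}\gamma)=H(t,\gamma)$ whose matrix $\widetilde{C}^{k,q}$ is, deep in the sector, a holomorphic power series in the monomials $t^{-\mu}\gamma^\nu$ ($\mu,\nu\in Q_+$); I write $\widetilde{C}^{(0)}\in\textup{End}(M)$ for its constant term.

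Second, I would expand $H=\sum_{\mu,\nu\in Q_+}H_{\mu,\nu}t^{-\mu}\gamma^\nu$ and compare coefficients of $t^{-\mu}\gamma^\nu$. Since the shift multiplies the $(\mu,\nu)$-term by $q^{(\lambda,\mu)+(\lambda^\prime,\nu)}$ and the non-constant part of $\widetilde{C}^{k,q}$ couples $H_{\mu,\nu}$ only to strictly lower coefficients, the system is triangular for the product partial order on $Q_+\times Q_+$: at level $(0,0)$ it reduces to $\widetilde{C}^{(0)}H_{0,0}=H_{0,0}$, which is met by the normalization $H_{0,0}=v_{w_0}$ (one checks from the explicit elementary matrices that $v_{w_0}$ is the relevant $1$-eigenvector of $\widetilde{C}^{(0)}$), while at level $(\mu,\nu)\neq(0,0)$ it reads
\[
\bigl(\textup{id}-q^{(\lambda,\mu)+(\lambda^\prime,\nu)}\,\widetilde{C}^{(0)}\bigr)H_{\mu,\nu}=R_{\mu,\nu},
\]
with $R_{\mu,\nu}$ an explicit combination of the $H_{\mu^\prime,\nu^\prime}$ for $(\mu^\prime,\nu^\prime)<(\mu,\nu)$. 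This determines every $H_{\mu,\nu}$, hence $H$ and $F=\mathcal{W}H$, uniquely from $H_{0,0}=v_{w_0}$, which yields the uniqueness claim.

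The step I expect to be the main obstacle is the solvability of this recursion, that is the invertibility of the indicial operator $\textup{id}-q^{(\lambda,\mu)+(\lambda^\prime,\nu)}\widetilde{C}^{(0)}$ for $(\mu,\nu)\neq(0,0)$. Here the hypothesis $0<q<1$ is decisive: for $\mu,\nu\in Q_+$ not both zero one may take the shift $(\lambda,\lambda^\prime)$ deep in the dominant cone, so that $q^{(\lambda,\mu)+(\lambda^\prime,\nu)}$ is as small as desired, whereas the eigenvalues of $\widetilde{C}^{(0)}$ stay bounded by the constraint \eqref{condition1} on $k$; the operator is then invertible by a Neumann series, and compatibility of the family \eqref{BqKZ} ensures that the $H_{\mu,\nu}$ so obtained are independent of the auxiliary shift. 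Normal convergence on compacta of $B_\epsilon^{-1}\times B_\epsilon$ for small $\epsilon$ then follows from a routine majorant estimate: the entries of $\widetilde{C}^{k,q}$ are dominated by a geometric series in the $t^{-\alpha_i}$ and $\gamma^{\alpha_i}$, the inverse indicial operators are uniformly bounded, and together these give a geometric bound $\lVert H_{\mu,\nu}\rVert\le CR^{\textup{ht}(\mu)+\textup{ht}(\nu)}$.

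Finally I would upgrade the locally convergent solution to an element of $\mathcal{K}_{k,q}\subset\mathbb{K}\otimes M$. Any $(t,\gamma)\in T\times T$ can be moved into $B_\epsilon^{-1}\times B_\epsilon$ by a shift $(q^{-\lambda}t,q^{\lambda^\prime}\gamma)$ with $(\lambda,\lambda^\prime)$ deep in the dominant cone, and the relation $F(t,\gamma)=C_{(\tau(\lambda),\tau(\lambda^\prime))}^{k,q}(t,\gamma)F(q^{-\lambda}t,q^{\lambda^\prime}\gamma)$ then defines $F$ there; since the $C$-matrices are rational in $(t,\gamma)$ and the cocycle property of $\nabla$ makes the definition independent of the chosen shift, the resulting $F$ is globally meromorphic and, by construction, solves \eqref{BqKZ}, so that $F\in\mathcal{K}_{k,q}$. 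For the nonreduced case I would simply observe that the elementary matrices $C_{(\cdot,\cdot)}^{k,q}$ have exactly the same shape as in the reduced and $\textup{GL}_m$ settings of \cite{vMSt,vM}, the only change being the Koornwinder form of the rational functions $c_a$; the triangularity, the indicial invertibility and the majorant estimates are thereby unaffected, and the argument carries over verbatim.
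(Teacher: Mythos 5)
Your overall strategy --- gauging away $\mathcal{W}$, making a power series ansatz, solving the resulting triangular recursion, proving normal convergence by majorants, and globalizing by the cocycle relations --- is exactly the Birkhoff-type asymptotic analysis on which the paper's proof rests: the paper itself simply cites \cite[Thm.~5.3]{vMSt} and \cite[Thm.~5.4]{vM} for this analysis, restricts \eqref{BqKZ} to the sublattice $\bigoplus_{i=1}^n\mathbb{Z}\varpi_i$ of $\Lambda$ to run it, and recovers the equations for all of $\Lambda\times\Lambda$ (and the nonreduced case) by compatibility. So in outline you and the paper agree.

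There is, however, a genuine gap at the step you yourself single out as the main obstacle. The constant term $\widetilde{C}^{(0)}=\widetilde{C}^{(0)}_{(\lambda,\lambda')}$ depends on the chosen shift, and the cocycle condition forces $(\lambda,\lambda')\mapsto\widetilde{C}^{(0)}_{(\lambda,\lambda')}$ to be \emph{multiplicative}: constant terms of products of power series with exponents in $Q_+\times Q_+$ multiply, and lattice shifts of the arguments do not change constant terms. Hence the eigenvalues of $\widetilde{C}^{(0)}_{(\lambda,\lambda')}$ are monoid characters of the dominant cone, so replacing $(\lambda,\lambda')$ by $(N\lambda,N\lambda')$ raises the eigenvalues and the factor $q^{(\lambda,\mu)+(\lambda',\nu)}$ to the $N$-th power \emph{simultaneously}; a resonance $q^{(\lambda,\mu)+(\lambda',\nu)}c=1$, if present, is scale-invariant, and going ``deep into the dominant cone'' removes nothing. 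Moreover, your premise that the eigenvalues ``stay bounded'' uniformly in the shift is, by this multiplicativity, equivalent to the assertion that every eigenvalue has modulus at most $1$ --- which is precisely what has to be proven, not assumed (and once it is proven, the Neumann series and the passage deep into the cone are superfluous, since then $q^{(\lambda,\mu)+(\lambda',\nu)}c\neq 1$ for every strictly dominant shift and every $(\mu,\nu)\neq(0,0)$). The actual content of the theorem, and the reason it holds for all $0<q<1$ and all $k$ subject to \eqref{condition1} with no genericity, is the spectral structure of the bispectral indicial matrix: one must verify from the explicit matrices $C^{k,q}_{(s_j,1)}$, $C^{k,q}_{(u(\mu),1)}$ and their $\gamma$-counterparts, together with the functional equations \eqref{frW} of $\mathcal{W}$, that the gauge converts every would-be nontrivial eigenvalue (a monomial of the type $\gamma^{ww_0\lambda-w_0\lambda}$ in the fixed-$\gamma$ quantum KZ picture) into a strictly positive power of the $\gamma^{\alpha_i}$, which is thereby pushed into the higher-order coefficients of $\widetilde{C}$; the constant term is then triangular with diagonal entries $0$ and $1$ only, the $1$-eigenspace being $\mathbb{C}v_{w_0}$ for regular dominant shifts. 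This computation is what underpins your level-$(0,0)$ step (why $v_{w_0}$ is a $1$-eigenvector and why the $1$-eigenspace is one-dimensional), your level-$(\mu,\nu)$ invertibility, the uniform bound on the inverse indicial operators in your majorant estimate, and the compatibility argument; in your proposal it is asserted in passing but never established.
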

\begin{proof}
See \cite[Thm. 5.3]{vMSt} ($\textup{GL}_m$ case) 
and \cite[Thm. 5.4]{vM} (reduced case).
The proofs are based on the asymptotic analysis of compatible systems
of $q$-difference
equations using classical methods which go back to Birkhoff \cite{Bi}
(see the appendix of \cite{vMSt}). 
These results extend immediately to the present
setup if one restricts the bispectral quantum KZ equations
\eqref{BqKZ} to $\lambda,\lambda^\prime$
in the sublattice $\bigoplus_{i=1}^n\mathbb{Z}\varpi_i$
of $\Lambda$. But the resulting function $F(\cdot,\cdot)$ then
automatically satisfies \eqref{BqKZ} for all 
$\lambda,\lambda^\prime\in\Lambda$ due to the compatibility of the
bispectral quantum KZ equations \eqref{BqKZ}
(cf. the proof of \cite[Thm. 3.4]{StqHC}).
\end{proof}

For $a\in R^\bullet$ let $n_a(\cdot)=n_a(\cdot;k,q)$ be the rational function 
\begin{equation*}
n_a(t)=
\begin{cases}
1-k_a^{-2}t_q^a,\qquad &\hbox{if } 2a\not\in R,\\
(1-k_a^{-1}k_{2a}^{-1}t_q^a)(1+k_a^{-1}k_{2a}t_q^a),\qquad &\hbox{if } 
2a\in R.
\end{cases}
\end{equation*}
Note that $c_a(t;k^{-1},q)=n_a(t;k,q)/n_a(t;\mathbf{1},q)$ 
for $a\in R^\bullet$, with $\mathbf{1}$ the multiplicity function identically
equal to one. 
Let $\mathcal{L}(\cdot)=\mathcal{L}_{q}(\cdot)$ and 
$\mathcal{S}(\cdot)=\mathcal{S}_{k,q}(\cdot)$
be the holomorphic functions on $T$ defined by
\begin{equation*}
\mathcal{L}_{q}(t):=\prod_{\stackrel{\alpha\in R_0^+,}{r\in\mathbb{Z}_{>0}}}
n_{\alpha+r\frac{|\alpha|^2}{2}c}(t;\mathbf{1},q),\qquad
\mathcal{S}_{k,q}(t):=
\prod_{\stackrel{\alpha\in R_0^+,}{r\in\mathbb{Z}_{>0}}}n_{\alpha+
r\frac{|\alpha|^2}{2}c}(t;k,q).
\end{equation*}
We give the key properties of $F(\cdot,\cdot)$ in the following theorem.
The proof follows from straightforward adjustments of the arguments
in \cite{vMSt,vM} (which corresponds to the $\textup{GL}_m$ case
and reduced case respectively).
\begin{thm}\label{properties}
{\bf (i)} $F\in\mathcal{K}$ is selfdual:
$\sigma(F(\cdot,\cdot;k,q))=F(\cdot,\cdot;k^d,q)$.\\
{\bf (ii)} $\{\nabla(1,w)F\}_{w\in W_0}$ is a 
$\mathbb{F}$-basis of $\mathcal{K}$.\\
{\bf (iii)} $T\times T\ni (t,\gamma)\mapsto 
\mathcal{S}_{k,q}(t^{-1})\mathcal{S}_{k^d,q}(\gamma)H(t,\gamma;k,q)$
is holomorphic.\\
{\bf (iv)} For $\epsilon>0$ sufficiently small there exist unique holomorphic
$M$-valued functions $\Upsilon_\mu(\cdot)$ on $T$ ($\mu\in Q_+$) such that
\[
\mathcal{S}_{k^d,q}(\gamma)H(t,\gamma;k,q)=
\sum_{\mu\in Q_+}\Upsilon_\mu(\gamma)t^{-\mu}
\]
for $(t,\gamma)\in B_\epsilon^{-1}\times T$, with the series converging
normally for $(t,\gamma)$ in compacta of $B_\epsilon^{-1}\times T$.\\
{\bf (v)} $\Upsilon_0(\gamma)=\mathcal{L}_q(\gamma)v_{w_0}$. 
\end{thm}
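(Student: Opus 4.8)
The plan is to derive all five parts from the defining asymptotic expansion of $F$ in Theorem \ref{AF} together with the duality \eqref{duality} of the $\nabla$-action, treating the pole structure of the coefficients $H_{\mu,\nu}$ as the technical core. For \textbf{(i)}, I would apply $\sigma$ to $F(\cdot,\cdot;k,q)$. By \eqref{duality} we have $\sigma(F(\cdot,\cdot;k,q))\in\mathcal{K}_{k^d,q}$, so it suffices to check that it has the factorized asymptotic form characterizing $F(\cdot,\cdot;k^d,q)$ in Theorem \ref{AF}. Since $\iota\bigl(\mathcal{W}(\cdot,\cdot;k,q)\bigr)=\mathcal{W}(\cdot,\cdot;k^d,q)$ and $\sigma$ interchanges the two torus variables (with inversion) while sending $v_w\mapsto v_{w^{-1}}$, the prefactor transforms into $\mathcal{W}(\cdot,\cdot;k^d,q)$ and the leading coefficient is sent to $v_{w_0^{-1}}=v_{w_0}$. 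Uniqueness in Theorem \ref{AF} then forces $\sigma(F(\cdot,\cdot;k,q))=F(\cdot,\cdot;k^d,q)$.

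For \textbf{(ii)}, I would first establish $\mathbb{F}$-linear independence of $\{\nabla(1,w)F\}_{w\in W_0}$ by computing leading asymptotics. Since $\nabla(1,w)$ acts on the $\gamma$-variable, the dominant term of $\nabla(1,w)F$ in the sector $B_\epsilon^{-1}\times B_\epsilon$ is governed by the $w$-translate $\mathcal{W}(t,w^{-1}\gamma)$; by the functional equations and selfduality of $\mathcal{W}$ these leading terms have pairwise distinct quasiperiodicities under $\gamma\mapsto q^{\lambda'}\gamma$. A putative $\mathbb{F}$-linear relation, after extracting the dominant asymptotics, then yields a Vandermonde-type obstruction forcing all coefficients to vanish. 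Spanning follows once $\dim_{\mathbb{F}}\mathcal{K}=|W_0|$, which I would obtain from the Birkhoff-type asymptotic analysis underlying Theorem \ref{AF} (cf.\ \cite{vMSt,vM} and the appendix of \cite{vMSt}): every solution admits an asymptotic expansion in the sector whose allowed leading exponents form a single $W_0$-orbit, bounding the dimension by $|W_0|$, so independence already gives a basis.

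The heart of the argument is \textbf{(iii)}. I would extract from the bispectral quantum KZ equations \eqref{BqKZ}, restricted to the generators $s_0,\dots,s_n$ and $u(\lambda)$, an explicit recursion expressing each coefficient $H_{\mu,\nu}$ in terms of lower-order coefficients, whose denominators are precisely products of the factors $n_{\alpha+r\frac{|\alpha|^2}{2}c}(t^{-1};k,q)$ in the $t$-variable and $n_{\alpha+r\frac{|\alpha|^2}{2}c}(\gamma;k^d,q)$ in the $\gamma$-variable; these arise from the $c_a(t;k^{-1},q)$ and $c_a(\gamma^{-1};(k^d)^{-1},q)$ appearing in the matrices $C_{(s_i,1)}$ and $C_{(1,s_i)}$. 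An induction on the height of $(\mu,\nu)$ then shows that multiplication by $\mathcal{S}_{k,q}(t^{-1})\mathcal{S}_{k^d,q}(\gamma)$ clears all poles, so the product is holomorphic on $T\times T$, the selfduality from \textbf{(i)} allowing me to treat the $\gamma$-side symmetrically to the $t$-side. I expect the main obstacle to be the bookkeeping of these denominators, ensuring that only the factors collected in $\mathcal{S}$ occur and that the recursion creates no spurious poles; this is where the precise form of the $n_a$ and the structure of the $C$-matrices must be used carefully.

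Part \textbf{(iv)} is then a resummation of \textbf{(iii)}. Since $\mathcal{S}_{k,q}(t^{-1})$ depends only on $t$ and has constant term $1$, it is invertible as a normally convergent series in $t^{-\mu}$ on $B_\epsilon^{-1}$; writing $\mathcal{S}_{k^d,q}(\gamma)H=\mathcal{S}_{k,q}(t^{-1})^{-1}\bigl(\mathcal{S}_{k,q}(t^{-1})\mathcal{S}_{k^d,q}(\gamma)H\bigr)$ and expanding the entire function from \textbf{(iii)} in powers of $t^{-\mu}$ with coefficients holomorphic in $\gamma\in T$, I obtain the desired expansion with each $\Upsilon_\mu$ holomorphic on all of $T$ and with normal convergence on $B_\epsilon^{-1}\times T$ inherited from the two convergent factors. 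Finally, for \textbf{(v)} I would isolate the $\mu=0$ coefficient $H_0(\gamma):=\sum_{\nu}H_{0,\nu}\gamma^\nu$: taking the leading $t$-order ($\mu=0$) in the $\gamma$-part of \eqref{BqKZ} degenerates the matrices $C_{(1,w)}$ to their $t\to\infty$ limits and shows that $H_0$ solves a first-order $q$-difference system in $\gamma$ whose solution, normalized by $H_{0,0}=v_{w_0}$, is $\prod_{\alpha\in R_0^+,\,r>0}c_{\alpha+r\frac{|\alpha|^2}{2}c}(\gamma;(k^d)^{-1},q)^{-1}\,v_{w_0}$. Since $c_a(\gamma;(k^d)^{-1},q)=n_a(\gamma;k^d,q)/n_a(\gamma;\mathbf{1},q)$, multiplying by $\mathcal{S}_{k^d,q}(\gamma)$ telescopes this to $\mathcal{L}_q(\gamma)v_{w_0}$, giving $\Upsilon_0(\gamma)=\mathcal{L}_q(\gamma)v_{w_0}$; alternatively the same identity can be read off from the leading $\gamma$-asymptotics of $F(\cdot,\cdot;k^d,q)$ through the selfduality of \textbf{(i)}.
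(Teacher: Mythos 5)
The paper's own ``proof'' of Theorem \ref{properties} is a one-line citation to \cite{vMSt,vM}, so what you are really doing is reconstructing the arguments of those references. Your part \textbf{(i)} does this correctly: duality \eqref{duality} puts $\sigma(F(\cdot,\cdot;k,q))$ in $\mathcal{K}_{k^d,q}$, the selfduality of $\mathcal{W}$ and the fact that $\iota$ sends $t^{-\mu}\gamma^{\nu}$ to $t^{-\nu}\gamma^{\mu}$ (preserving the sector $B_\epsilon^{-1}\times B_\epsilon$ and sending $v_{w_0}\mapsto v_{w_0^{-1}}=v_{w_0}$) give the normalized asymptotic form, and uniqueness in Theorem \ref{AF} finishes. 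Your telescoping $c_a(\gamma;(k^d)^{-1},q)=n_a(\gamma;k^d,q)/n_a(\gamma;\mathbf{1},q)$ in \textbf{(v)} is also the right mechanism. However, your step \textbf{(iii)} --- which you correctly identify as the heart --- would fail as written. The coefficients $H_{\mu,\nu}$ of Theorem \ref{AF} are \emph{constant} vectors in $M$: when you substitute the double series into \eqref{BqKZ} and match coefficients of $t^{-\mu}\gamma^{\nu}$, every trace of $(t,\gamma)$ disappears, and the denominators of the resulting recursion are purely numerical non-resonance factors (differences of monomials in $q$, the $k_a$ and the entries of $\gamma_0$); they are \emph{not} the functions $n_{\alpha+r\frac{|\alpha|^2}{2}c}(t^{-1};k,q)$, $n_{\alpha+r\frac{|\alpha|^2}{2}c}(\gamma;k^d,q)$. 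So an induction on the height of $(\mu,\nu)$ can never see the polar divisors that $\mathcal{S}_{k,q}(t^{-1})\mathcal{S}_{k^d,q}(\gamma)$ is supposed to clear. Indeed, inside the sector $B_\epsilon^{-1}\times B_\epsilon$ the convergent series already shows $H$ is holomorphic, with no poles at all; statement \textbf{(iii)} is entirely about the behaviour of $H$ \emph{outside} that sector, where $H$ is only defined by meromorphic continuation via the $q$-difference equations, a region no manipulation of the Taylor coefficients alone can reach.

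The repair uses your own ingredients but reverses your logical order \textbf{(iii)}$\Rightarrow$\textbf{(iv)}. Expand in one variable at a time: write $H(t,\gamma)=\sum_{\mu\in Q_+}\Upsilon_\mu'(\gamma)t^{-\mu}$ with coefficients that are \emph{functions} of $\gamma$. The $t$-translation equations of \eqref{BqKZ}, after dividing out $\mathcal{W}$ via \eqref{frW}, give a genuine matrix recursion $(\mathrm{Id}-\gamma_0^{-\lambda}\gamma^{-w_0\lambda}q^{(\lambda,\mu)}C_0(\gamma))\Upsilon_\mu'(\gamma)=(\hbox{lower terms})$, whose denominators really are functions of $\gamma$, and the bookkeeping you describe then shows they are absorbed by $\mathcal{S}_{k^d,q}(\gamma)$: this proves \textbf{(iv)} first, hence the $\gamma$-half of \textbf{(iii)} on $B_\epsilon^{-1}\times T$; the $t$-half follows from selfduality \textbf{(i)} applied to the $(k^d,q)$-statement, and the full product domain $T\times T$ then requires a continuation argument in which one inducts on $q$-lattice shifts, tracking the polar divisors of the cocycle matrices $C_{(\tau(\lambda),1)}$, $C_{(1,\tau(\lambda'))}$ --- this, not a coefficient recursion, is where the factors $n_a$ actually enter. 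A smaller soft spot is the spanning claim in \textbf{(ii)}: ``every solution admits an asymptotic expansion whose leading exponents form a single $W_0$-orbit'' is not something you have established and is close to assuming the conclusion. The standard route (and the one in \cite{vMSt,vM}) is to show that the $|W_0|\times|W_0|$ matrix $A(t)$ formed by the components of the $\nabla(1,w)F$ is invertible (by the leading asymptotics you invoke), and then to check that for any $g\in\mathcal{K}$ the coefficient vector $A^{-1}g$ is fixed by both $q$-translation actions, i.e.\ lies in $\mathbb{F}^{W_0}$; this yields the dimension bound and spanning simultaneously.
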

{}From the third part of the theorem we conclude
\begin{cor}
Let $Z_{k,q}\subseteq T$ be the zero locus of $\mathcal{S}_{k,q}(\cdot)$
and set $Z_{k,q}^{-1}:=\{t^{-1} \, | \, t\in Z_{k,q}\}$.
Then $H(\cdot,\cdot;k,q)$ is holomorphic on $T\setminus Z_{k,q}^{-1}\times
T\setminus Z_{k^d,q}$.
\end{cor}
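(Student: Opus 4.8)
The plan is to obtain the statement as an immediate consequence of Theorem \ref{properties}(iii), by dividing out the two holomorphic prefactors on the locus where they do not vanish. The entire analytic content is already contained in that part of Theorem \ref{properties}; what remains is essentially bookkeeping about zero loci.

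First I would record the elementary identification of zero loci. By definition $Z_{k,q}$ is the zero set of $\mathcal{S}_{k,q}(\cdot)$, and inversion $t\mapsto t^{-1}$ is a biholomorphic automorphism of the torus $T$; hence $t\mapsto\mathcal{S}_{k,q}(t^{-1})$ is a holomorphic function on $T$ whose zero locus is exactly $\{t\in T\mid t^{-1}\in Z_{k,q}\}=Z_{k,q}^{-1}$. Likewise $\gamma\mapsto\mathcal{S}_{k^d,q}(\gamma)$ is holomorphic on $T$ with zero locus $Z_{k^d,q}$. Consequently the scalar function $(t,\gamma)\mapsto\mathcal{S}_{k,q}(t^{-1})\mathcal{S}_{k^d,q}(\gamma)$ is holomorphic on $T\times T$ and vanishes precisely on the complement of the open set $U:=(T\setminus Z_{k,q}^{-1})\times(T\setminus Z_{k^d,q})$.

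Next I would invoke Theorem \ref{properties}(iii), which asserts that the vector-valued function
\[
G(t,\gamma):=\mathcal{S}_{k,q}(t^{-1})\,\mathcal{S}_{k^d,q}(\gamma)\,H(t,\gamma;k,q)
\]
is holomorphic on all of $T\times T$. Since $H(\cdot,\cdot;k,q)$ lies in $\mathbb{K}\otimes M$, the displayed identity is an equality of meromorphic functions on $T\times T$ in which the scalar factor is not identically zero; hence on the open set $U$, where $\mathcal{S}_{k,q}(t^{-1})\mathcal{S}_{k^d,q}(\gamma)$ is nowhere zero, I can solve for $H$ to get
\[
H(t,\gamma;k,q)=\frac{G(t,\gamma)}{\mathcal{S}_{k,q}(t^{-1})\,\mathcal{S}_{k^d,q}(\gamma)},\qquad (t,\gamma)\in U.
\]
The right-hand side is the quotient of a holomorphic vector-valued function by a nowhere-vanishing scalar holomorphic function on $U$, and is therefore holomorphic on $U$; this is precisely the assertion of the corollary.

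There is essentially no analytic obstacle to overcome: the convergence of the infinite products defining $\mathcal{S}_{k,q}$ and $\mathcal{S}_{k^d,q}$, together with their holomorphy, is already built into the statement of Theorem \ref{properties} (it follows from $0<q<1$, which forces $t_q^{a}=q^{r|\alpha|^2/2}t^\alpha\to 0$ as $r\to\infty$, so that the factors $n_a$ tend to $1$ normally on compacta). The only point requiring genuine care is the bookkeeping identification of the zero locus of $t\mapsto\mathcal{S}_{k,q}(t^{-1})$ as $Z_{k,q}^{-1}$ rather than $Z_{k,q}$, which is exactly what accounts for the inverse appearing in the first torus factor of the stated domain.
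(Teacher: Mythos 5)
Your proposal is correct and coincides with the paper's own argument: the paper derives the corollary directly from Theorem \ref{properties}{\bf (iii)} (its proof is the single sentence preceding the corollary), exactly as you do by dividing the holomorphic function $\mathcal{S}_{k,q}(t^{-1})\mathcal{S}_{k^d,q}(\gamma)H(t,\gamma;k,q)$ by the prefactor where it does not vanish. Your additional bookkeeping identifying the zero locus of $t\mapsto\mathcal{S}_{k,q}(t^{-1})$ with $Z_{k,q}^{-1}$ is precisely the elementary detail the paper leaves implicit.
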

In the reduced case,
\[
Z_{k,q}=\{t\in T \,\, | \,\, t^\alpha\in k_\alpha^2q_{\alpha}^{-\mathbb{Z}_{>0}}
\,\,\,\textup{for some}\,\,\, \alpha\in R_0^+\}.
\]
In the nonreduced case,
\begin{equation*}
\begin{split}
Z_{k,q}=&\left\{t\in T \,\, | \,\, t^\alpha\in\{aq_\theta^{-2\mathbb{Z}_{>0}},
bq_\theta^{-2\mathbb{Z}_{>0}},cq_\theta^{-2\mathbb{Z}_{>0}},
dq_\theta^{-2\mathbb{Z}_{>0}}\}\,\,\,\textup{for some}\,\,\, \alpha\in R_{0,s}^+
\right.\\
&\left.\qquad\qquad\qquad\qquad\qquad\qquad\qquad
\quad \textup{or}\,\, t^\beta\in 
k_\vartheta^2q_{\vartheta}^{-\mathbb{Z}_{>0}}\,\,\,
\textup{for some}\,\,\, \beta\in R_{0,l}^+\right\},
\end{split}
\end{equation*}
where
\begin{equation}\label{AWpar}
\{a,b,c,d\}:=\{k_\theta k_{2\theta}, -k_\theta k_{2\theta}^{-1},
q_\theta k_0k_{2a_0}, -q_\theta k_0k_{2a_0}^{-1}\}.
\end{equation}

\subsection{Basic Harish-Chandra series}\label{bHCs}
Following \cite[\S 6.3]{vMSt} and \cite[\S 7]{vM} 
we have the following fundamental definition.
\begin{defi}\label{bHCdef}
The selfdual 
basic Harish-Chandra series $\Phi(\cdot,\cdot)=\Phi(\cdot,\cdot;k,q)
\in\mathcal{U}_{k,q}$ is defined by 
\begin{equation}\label{decPhi}
\Phi:=\phi(F)=\mathcal{W}\phi(H).
\end{equation}
\end{defi}
The properties of $F$ from Theorem \ref{properties}
(singularities, selfduality, leading term) can immediately be
transferred to the selfdual basic Harish-Chandra series $\Phi$. 
In particular, by Theorem \ref{relspaces}{\bf (iii)}
the selfduality of $F$ gives the selfduality of $\Phi$, 
\[
\iota\bigl(\Phi(\cdot,\cdot;k,q)\bigr)=\Phi(\cdot,\cdot;k^d,q).
\]

In the derivation of
the $c$-function expansion of the basic hypergeometric function
we initially make use of
the selfdual basic Harish-Chandra series.
To make the connection to the classical theory more 
transparent we will reformulate these
results in terms of a renormalization of $\Phi(t,\gamma)$
which is closer to the standard normalization of the
classical Harish-Chandra series (see the introduction). 
It is a $\gamma$-dependent
renormalization of $\Phi(t,\gamma)$, which also depends on a base
point $\eta\in T$ (indicating the choice of normalization of the prefactor). 
This renormalization of $\Phi$ breaks the duality symmetry.

To define the renormalized version of the basic Harish-Chandra series,
consider first the renormalization
$\widehat{H}(\cdot,\cdot)=\widehat{H}(\cdot,\cdot;k,q)\in
\mathbb{K}\otimes M$ of $H(\cdot,\cdot)$ given by
\[
\widehat{H}(t,\gamma):=
\frac{\mathcal{S}_{k^d,q}(\gamma)\sum_{w\in W_0}k_w^2}{\mathcal{L}_q(\gamma)}
H(t,\gamma).
\]
Note that for $\epsilon>0$ sufficiently small,
\[
\widehat{H}(t,\gamma)=\sum_{\mu\in Q_+}\widehat{\Upsilon}_\mu(\gamma)t^{-\mu}
\]
for $(t,\gamma)\in B_\epsilon^{-1}\times
\{\gamma\in T \,\, | \,\, \mathcal{L}_q(\gamma)\not=0\}$,
and $\phi(\widehat{\Upsilon}_0)\equiv 1$.

The monic basic Harish-Chandra series
$\widehat{\Phi}_\eta(\cdot,\cdot)=\widehat{\Phi}_\eta(\cdot,\cdot;k,q)$
with generic reference point $\eta\in T$ is now defined by 
\[
\widehat{\Phi}_\eta:=\widehat{\mathcal{W}}_\eta\phi(\widehat{H})
\]
with prefactor $\widehat{\mathcal{W}}_\eta(\cdot,\cdot)=
\widehat{\mathcal{W}}_\eta(\cdot,\cdot;k,q)
\in\mathbb{K}$ defined as follows.
Let 
\[\rho_s^\vee:=\frac{1}{2}\sum_{\beta\in R_{0,s}^+}\beta^\vee
\]
with $R_{0,s}^+\subset R_0^+$ 
the subset of positive short roots. For $x\in\mathbb{R}_{>0}$
let $x^{\rho_s^\vee}\in T$ be the torus element
$\lambda\mapsto x^{(\rho_s^\vee,\lambda)}$ ($\lambda\in\Lambda$). 
Then $\widehat{\mathcal{W}}_\eta$ is defined to be
\[
\widehat{\mathcal{W}}_\eta(t,\gamma)=\frac{\widehat{\mathcal{W}}(t,\gamma)}
{\widehat{\mathcal{W}}(\eta\gamma_{0,d},\gamma)}
\]
with
\[
\widehat{\mathcal{W}}(t,\gamma)=\frac{\vartheta\bigl(\gamma_0^{-1}
(k_0^{-1}k_{2a_0})^{\rho_s^\vee}t(w_0\gamma)^{-1}\bigr)}
{\vartheta((k_0^{-1}k_{2a_0})^{\rho_s^\vee}t\bigr)}
\]
(note that $(k_0^{-1}k_{2a_0})^{\rho_s^\vee}=1$
in the reduced case). 
The prefactor $\widehat{\mathcal{W}}_\eta(t,\gamma)$   
satisfies the same functional equations as function
of $t\in T$ as the selfdual prefactor $\mathcal{W}(t,\gamma)$,
\[
\widehat{\mathcal{W}}_\eta(q^\lambda t,\gamma)=
\gamma_0^\lambda\gamma^{w_0\lambda}\widehat{\mathcal{W}}_\eta(t,
\gamma)\qquad \forall\,\lambda\in\Lambda.
\]
\begin{cor}
Let $\gamma\in T$ such that $\mathcal{L}_q(\gamma)\not=0$.
The monic basic Harish-Chandra series $\widehat{\Phi}_\eta(\cdot,\gamma)$ 
satisfies the Macdonald $q$-difference equations
\begin{equation}\label{Macdonaldqdiff}
D_p\widehat{\Phi}_\eta(\cdot,\gamma)=p(\gamma^{-1})\widehat{\Phi}_\eta(\cdot,
\gamma)\qquad \forall\,p\in\mathbb{C}[T]^{W_0}
\end{equation}
and has, for $t\in B_\epsilon^{-1}$ with $\epsilon>0$ sufficiently small,
a convergent series expansion
\[
\widehat{\Phi}_\eta(t,\gamma)=\widehat{\mathcal{W}}_\eta(t,\gamma)
\sum_{\mu\in Q_+}\Gamma_\mu(\gamma)t^{-\mu}
\]
where $\Gamma_\mu(\gamma):=
\phi(\widehat{\Upsilon}_\mu(\gamma))$ (in particular,
$\Gamma_0(\gamma)=1$). The series converges normally
for $t$ in compacta of $B_\epsilon^{-1}$. 
\end{cor}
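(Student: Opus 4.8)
The plan is to derive both assertions by transferring properties already established for the selfdual basic Harish-Chandra series $\Phi=\phi(F)$ of Definition \ref{bHCdef} and for the renormalized asymptotic expansion $\widehat{H}$, after exhibiting $\widehat{\Phi}_\eta(\cdot,\gamma)$ as a $t$-quasiconstant multiple of $\Phi(\cdot,\gamma)$. First I would make this relation explicit. Since $\phi$ is $\mathbb{K}$-linear in its scalar argument and $\widehat{H}=\bigl(\mathcal{S}_{k^d,q}(\gamma)\sum_{w\in W_0}k_w^2/\mathcal{L}_q(\gamma)\bigr)H$, one gets $\phi(\widehat{H})=\bigl(\mathcal{S}_{k^d,q}(\gamma)\sum_{w}k_w^2/\mathcal{L}_q(\gamma)\bigr)\phi(H)$; combining with $\Phi=\mathcal{W}\phi(H)$ and $\widehat{\Phi}_\eta=\widehat{\mathcal{W}}_\eta\phi(\widehat{H})$ gives
\[
\widehat{\Phi}_\eta(t,\gamma)=\frac{\widehat{\mathcal{W}}_\eta(t,\gamma)}{\mathcal{W}(t,\gamma)}\,\frac{\mathcal{S}_{k^d,q}(\gamma)\sum_{w\in W_0}k_w^2}{\mathcal{L}_q(\gamma)}\,\Phi(t,\gamma)=:u(t,\gamma)\Phi(t,\gamma),
\]
where the hypothesis $\mathcal{L}_q(\gamma)\neq 0$ keeps the $\gamma$-factor finite.

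For the Macdonald $q$-difference equations I would argue that $u(\cdot,\gamma)$ is quasiconstant in $t$. By Theorem \ref{AF}, $F\in\mathcal{K}_{k,q}$, and by Theorem \ref{relspaces}{\bf (ii)}, $\phi$ maps $\mathcal{K}$ into $\mathcal{U}$; hence $\Phi\in\mathcal{U}_{k,q}$ satisfies the first set of equations in \eqref{bisp}, i.e.\ $D_p\Phi(\cdot,\gamma)=p(\gamma^{-1})\Phi(\cdot,\gamma)$. The $\gamma$-only factor in $u$ is trivially quasiconstant in $t$, while the ratio $\widehat{\mathcal{W}}_\eta/\mathcal{W}$ is quasiconstant in $t$ because both $\widehat{\mathcal{W}}_\eta(\cdot,\gamma)$ and $\mathcal{W}(\cdot,\gamma)$ obey the same functional equation $f(q^\lambda t,\gamma)=\gamma_0^\lambda\gamma^{w_0\lambda}f(t,\gamma)$ in $t$ (compare \eqref{frW} with the displayed functional equation for $\widehat{\mathcal{W}}_\eta$), so the transformation factors cancel. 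Since each Macdonald operator $D_p$ is a scalar $q$-difference operator in $t$, built only from multiplication operators and the translations $\tau(\lambda)_q$ with no reflection part, it commutes with multiplication by any $t$-quasiconstant function. Therefore $D_p\widehat{\Phi}_\eta(\cdot,\gamma)=u(\cdot,\gamma)D_p\Phi(\cdot,\gamma)=p(\gamma^{-1})\widehat{\Phi}_\eta(\cdot,\gamma)$, which is \eqref{Macdonaldqdiff}.

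For the series expansion I would apply $\phi$ termwise to the expansion of $\widehat{H}$. By Theorem \ref{properties}{\bf (iv)} the series $\mathcal{S}_{k^d,q}(\gamma)H(t,\gamma)=\sum_{\mu\in Q_+}\Upsilon_\mu(\gamma)t^{-\mu}$ converges normally on compacta of $B_\epsilon^{-1}\times T$; multiplying by the $\gamma$-factor gives $\widehat{H}(t,\gamma)=\sum_{\mu\in Q_+}\widehat{\Upsilon}_\mu(\gamma)t^{-\mu}$, converging normally on compacta of $B_\epsilon^{-1}\times\{\gamma\,|\,\mathcal{L}_q(\gamma)\neq 0\}$. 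As $\phi$ is a fixed linear combination of the finitely many components in $M$, normal convergence lets me pull $\phi$ inside the sum, so $\phi(\widehat{H})(t,\gamma)=\sum_{\mu\in Q_+}\Gamma_\mu(\gamma)t^{-\mu}$ with $\Gamma_\mu(\gamma)=\phi(\widehat{\Upsilon}_\mu(\gamma))$, whence $\widehat{\Phi}_\eta=\widehat{\mathcal{W}}_\eta\sum_{\mu}\Gamma_\mu(\gamma)t^{-\mu}$ converges normally in $t$ on compacta of $B_\epsilon^{-1}$. The normalization $\Gamma_0(\gamma)=\phi(\widehat{\Upsilon}_0)\equiv 1$ is already recorded above; concretely it follows from $\widehat{\Upsilon}_0(\gamma)=(\sum_w k_w^2)v_{w_0}$ (Theorem \ref{properties}{\bf (v)}) together with the identity $\sum_{w\in W_0}k_w^2=k_{w_0}^2\sum_{w\in W_0}k_w^{-2}$, obtained from the multiplicativity $k_{uv}=k_uk_v$ for $l(uv)=l(u)+l(v)$ and the substitution $w\mapsto w^{-1}w_0$.

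The entire argument is a harvesting of established facts, so I do not expect a deep obstacle. The one genuine point, which I regard as the conceptual crux, is the cancellation making $\widehat{\mathcal{W}}_\eta/\mathcal{W}$ quasiconstant in $t$: this is exactly what allows the eigenvalue equation to survive the $\eta$-dependent renormalization, and it hinges on the matching functional equations of the two prefactors. The interchange of $\phi$ with the infinite series is then routine given the normal convergence.
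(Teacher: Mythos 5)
Your proof is correct and follows the same route the paper takes implicitly: the corollary is stated there without a separate proof because it follows directly from $\Phi=\mathcal{W}\phi(H)\in\mathcal{U}_{k,q}$, the fact that $\widehat{\mathcal{W}}_\eta$ and $\mathcal{W}$ satisfy identical functional equations in $t$ (so that $\widehat{\Phi}_\eta$ is a $t$-quasiconstant multiple of $\Phi$, exactly the relation \eqref{relationPhi} recorded later in Remark \ref{consequenceremark}), and termwise application of the $\mathbb{K}$-linear map $\phi$ to the normally convergent expansion of $\widehat{H}$. Your explicit check of $\Gamma_0(\gamma)=1$ via $k_{w_0w}=k_{w_0}k_w^{-1}$ correctly supplies the computation behind the identity $\phi(\widehat{\Upsilon}_0)\equiv 1$, which the paper states without proof.
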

Since
\begin{equation}\label{good}
\widehat{\mathcal{W}}_\eta(q^\lambda\eta\gamma_{0,d},\gamma)=
\gamma_0^\lambda\gamma^{w_0\lambda}\qquad \forall\,\lambda\in\Lambda,
\end{equation}
the monic basic Harish-Chandra series
$\widehat{\Phi}_\eta(\cdot,\gamma)$ is the natural normalization
of the basic Harish-Chandra series when restricting the
Macdonald $q$-difference equations \eqref{Macdonaldqdiff}
to functions on the $q$-lattice $\eta\gamma_{0,d}q^\Lambda$.
\begin{prop}\label{HCpol} Fix $\lambda\in\Lambda^-$.
For generic values of the multiplicity function $k$
we have
\begin{equation}\label{evaluations}
\widehat{\Phi}_\eta(t,\gamma_\lambda)=(\eta\gamma_{0,d})^{-w_0\lambda}
P_\lambda^+(t).
\end{equation}
\end{prop}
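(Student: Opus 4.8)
The plan is to show that both sides of \eqref{evaluations} are solutions of the same Macdonald $q$-difference eigenvalue system, of the same asymptotically free shape, and then invoke uniqueness of such a solution. For generic $k$ we have $\mathcal{L}_q(\gamma_\lambda)\neq 0$, so by the Corollary preceding \eqref{good} the series $\widehat{\Phi}_\eta(\cdot,\gamma_\lambda)$ is well defined, it solves $D_p\widehat{\Phi}_\eta(\cdot,\gamma_\lambda)=p(\gamma_\lambda^{-1})\widehat{\Phi}_\eta(\cdot,\gamma_\lambda)$ for all $p\in\mathbb{C}[T]^{W_0}$ (the equation \eqref{Macdonaldqdiff}), and on $B_\epsilon^{-1}$ it expands as $\widehat{\mathcal{W}}_\eta(t,\gamma_\lambda)\sum_{\mu\in Q_+}\Gamma_\mu(\gamma_\lambda)t^{-\mu}$ with $\Gamma_0(\gamma_\lambda)=1$. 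On the other hand $P_\lambda^+$ solves the same system with the same eigenvalue $p(\gamma_\lambda^{-1})$ and has the expansion $P_\lambda^+(t)=\sum_{\mu\in Q_+}d_\mu t^{w_0\lambda-\mu}$ with $d_0=1$. So the whole matter reduces to computing the prefactor $\widehat{\mathcal{W}}_\eta(\cdot,\gamma_\lambda)$ and then matching the two coefficient sequences.

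The crux, which I expect to be the main obstacle, is to show that at the spectral point the theta-quotient prefactor collapses to a single monomial, namely $\widehat{\mathcal{W}}_\eta(t,\gamma_\lambda)=(\eta\gamma_{0,d})^{-w_0\lambda}t^{w_0\lambda}$. First I would record the identity $w_0\gamma_0=\gamma_0^{-1}$: applying $w_0$ to \eqref{gamma0} and reindexing the product over $R_0^+$ by $\beta=-w_0\alpha$, the exponents $-w_0\alpha^\vee=\beta^\vee$ are inverted while the coefficients $k_\alpha^{1/2}k_{\alpha+|\alpha|^2c/2}^{1/2}$ are unchanged by the $W_0$-invariance of $k$, giving $w_0\gamma_0=\gamma_0^{-1}$. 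Since $\lambda\in\Lambda^-$ yields $\gamma_\lambda=q^\lambda\gamma_0$, this gives $w_0\gamma_\lambda=q^{w_0\lambda}\gamma_0^{-1}$, hence $(w_0\gamma_\lambda)^{-1}=q^{-w_0\lambda}\gamma_0$. Writing $\zeta:=(k_0^{-1}k_{2a_0})^{\rho_s^\vee}$ and substituting into the defining theta-quotient of $\widehat{\mathcal{W}}(\cdot,\gamma_\lambda)$, the $\gamma_0$ cancels the $\gamma_0^{-1}$ in the numerator argument, so the numerator is $\vartheta(q^{-w_0\lambda}\zeta t)$; the functional equation $\vartheta(q^\nu t)=q^{-|\nu|^2/2}t^{-\nu}\vartheta(t)$ then turns this into $q^{-|\lambda|^2/2}(\zeta t)^{w_0\lambda}$ times the denominator theta $\vartheta(\zeta t)$. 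The theta functions cancel, leaving the monomial $\widehat{\mathcal{W}}(t,\gamma_\lambda)=q^{-|\lambda|^2/2}\zeta^{w_0\lambda}t^{w_0\lambda}$; dividing by its value at $t=\eta\gamma_{0,d}$ clears the $q$- and $\zeta$-factors and produces $\widehat{\mathcal{W}}_\eta(t,\gamma_\lambda)=(\eta\gamma_{0,d})^{-w_0\lambda}t^{w_0\lambda}$ (one checks this is consistent with \eqref{good}).

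With the prefactor in monomial form, the expansion of $\widehat{\Phi}_\eta(\cdot,\gamma_\lambda)$ becomes $(\eta\gamma_{0,d})^{-w_0\lambda}\sum_{\mu\in Q_+}\Gamma_\mu(\gamma_\lambda)t^{w_0\lambda-\mu}$, which has exactly the same shape as $(\eta\gamma_{0,d})^{-w_0\lambda}P_\lambda^+(t)$. Both are therefore solutions of $D_pF=p(\gamma_\lambda^{-1})F$ of the asymptotically free form $t^{w_0\lambda}\bigl(1+\sum_{\mu\neq 0}(\cdot)\,t^{-\mu}\bigr)$. Expanding the eigenvalue equations in monomials and ordering $Q_+$ by the dominance order gives a triangular recursion whose diagonal entry at level $\mu$ is the difference between the leading spectral value attached to $t^{w_0\lambda-\mu}$ and $p(\gamma_\lambda^{-1})$; for generic $k$ these are nonzero for $\mu\neq 0$, so the coefficients are determined uniquely by the common normalization $d_0=\Gamma_0(\gamma_\lambda)=1$. (This uniqueness is exactly what underlies the construction of $\widehat{\Phi}_\eta$ from the asymptotically free solution $F$ in Theorem \ref{AF}.) Hence $\Gamma_\mu(\gamma_\lambda)=d_\mu$ for all $\mu\in Q_+$, which gives \eqref{evaluations} on $B_\epsilon^{-1}$ and therefore on all of $T$ by meromorphic continuation. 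The recursion step is routine; the real work is the theta collapse of the second paragraph.
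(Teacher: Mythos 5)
Your proposal is correct and takes essentially the same route as the paper: the paper's proof likewise rests on noting that $\widehat{\mathcal{W}}(t,\gamma_\lambda)$ collapses to the monomial $q^{-|\lambda|^2/2}k_0^{(\lambda,\rho_s^\vee)}k_{2a_0}^{-(\lambda,\rho_s^\vee)}t^{w_0\lambda}$ (your theta-function computation, using $w_0\gamma_0=\gamma_0^{-1}$ and the functional equation of $\vartheta$, supplies the details the paper omits), giving the expansion $\sum_{\mu\in Q_+}d_\mu t^{w_0\lambda-\mu}$ with $d_0=(\eta\gamma_{0,d})^{-w_0\lambda}$. The concluding step is also the same: for generic $k$ the eigenvalue equations $D_pf=p(\gamma_\lambda^{-1})f$ admit a unique solution of this asymptotically free shape with prescribed leading coefficient (the paper cites \cite[Thm. 4.6]{LS} for the triangularity argument you sketch), and $d_0P_\lambda^+$ satisfies the same characterizing properties.
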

\begin{proof}
Fix $\lambda\in\Lambda^-$. Note that 
\[
\widehat{\mathcal{W}}(t,\gamma_\lambda)=
q^{\frac{-|\lambda|^2}{2}}k_0^{(\lambda,\rho_s^\vee)}
k_{2a_0}^{-(\lambda,\rho_s^\vee)}t^{w_0\lambda},
\]
hence for $t\in B_\epsilon^{-1}$ with $\epsilon>0$ sufficiently small,
\begin{equation}\label{expansionmon}
\widehat{\Phi}_\eta(t,\gamma_\lambda)=
\sum_{\mu\in Q_+}d_\mu t^{w_0\lambda-\mu}
\end{equation}
as normally convergent series for $t$ in compacta of $B_\epsilon^{-1}$, 
with leading coefficient
\begin{equation}\label{lt}
d_0=(\eta\gamma_{0,d})^{-w_0\lambda}
\end{equation}
(this requires $\mathcal{L}_q(\gamma_\lambda)\not=0$, which we impose
as one of the genericity conditions on the multiplicity function).
Since $k$ is generic, this 
characterizes $\widehat{\Phi}(\cdot,\gamma_\lambda)$ within the class
of formal power series $f\in\mathbb{C}[[X^{-\alpha_i}]]X^{w_0\lambda}$ 
satisfying the eigenvalue equations
\[
\bigl(D_pf\bigr)(t)=
p(\gamma_\lambda^{-1})f(t),\qquad \forall\,
p\in\mathbb{C}[T]^{W_0}
\]
(cf., e.g., \cite[Thm. 4.6]{LS}). The result now follows since
$f(t)=d_0P_\lambda^+(t)$ satisfies the same characterizing properties.
\end{proof}
\begin{rema}\label{consequenceremark}
The explicit evaluation formula \cite[\S 3.3.2]{C}
for the symmetric Macdonald-Koornwinder
polynomial $P_\lambda^+(\gamma_{0,d})=
P_\lambda^+(w_0\gamma_{0,d})=P_\lambda^+(\gamma_{0,d}^{-1})$
($\lambda\in\Lambda^-$) 
can be derived from Proposition \ref{HCpol} and
the fundamental properties of the selfdual basic Harish-Chandra series
\begin{equation}\label{relationPhi}
\Phi(t,\gamma)=\frac{\mathcal{W}(t,\gamma)}
{\widehat{\mathcal{W}}_\eta(t,\gamma)}
\frac{\mathcal{L}_q(\gamma)}
{\mathcal{S}_{k^d,q}(\gamma)\sum_{w\in W_0}k_w^2}\widehat{\Phi}_\eta(t,\gamma)
\end{equation}
as follows. By a direct computation using
Proposition \ref{HCpol}, 
\begin{equation}\label{l1}
\Phi(\gamma_{\mu,d}^{-1},\gamma_\lambda;k,q)=
\frac{\mathcal{W}(\gamma_{0,d}^{-1},\gamma_0;k,q)}{\sum_{w\in W_0}k_w^2}
\gamma_{0,d}^{-\lambda}\frac{\mathcal{L}_q(\gamma_\lambda)}
{\mathcal{S}_{k^d,q}(\gamma_\lambda)}P_\lambda^+(\gamma_{\mu,d}^{-1};k,q)
\end{equation}
for $\lambda,\mu\in\Lambda^-$.
By the selfduality of $\Phi$ and of $\mathcal{W}(\cdot,\cdot)$ 
it is also equal to
\begin{equation}\label{l2}
\Phi(\gamma_\lambda^{-1},\gamma_{\mu,d};k^d,q)=
\frac{\mathcal{W}(\gamma_{0,d}^{-1},\gamma_0;k,q)}{\sum_{w\in W_0}k_w^2}
\gamma_0^{-\mu}\frac{\mathcal{L}_q(\gamma_{\mu,d})}
{\mathcal{S}_{k,q}(\gamma_{\mu,d})}P_\mu^+(\gamma_\lambda^{-1};k^d,q).
\end{equation}
Setting $\lambda=\mu=0$ we get
\[
\frac{\mathcal{L}_q(\gamma_0)}{\mathcal{S}_{k^d,q}(\gamma_0)}=
\frac{\mathcal{L}_q(\gamma_{0,d})}{\mathcal{S}_{k,q}(\gamma_{0,d})}.
\]
Setting $\mu=0$ we then get the evaluation formula
\[
P_\lambda^+(\gamma_{0,d}^{-1})=
\gamma_{0,d}^\lambda\frac{\mathcal{L}_q(\gamma_0)}
{\mathcal{S}_{k^d,q}(\gamma_0)}\frac{\mathcal{S}_{k^d,q}(\gamma_\lambda)}
{\mathcal{L}_q(\gamma_\lambda)}.
\]
Returning to \eqref{l1} and \eqref{l2} it yields the well known
selfduality
\[
E_+(\gamma_\lambda;\gamma_{\mu,d}^{-1};k,q)=
E_+(\gamma_{\mu,d},\gamma_\lambda^{-1};k^d,q)\qquad 
\forall \lambda,\mu\in\Lambda^-
\]
of the symmetric Macdonald-Koornwinder polynomials. Using
$E_+(\gamma_\lambda;t)=E_+(\gamma_{-w_0\lambda};t^{-1})$ 
and $\gamma_{\lambda}^{-1}=w_0\gamma_{-w_0\lambda}$ for $\lambda\in\Lambda^-$
the selfduality can be rewritten as
\[
E_+(\gamma_\lambda;\gamma_{\mu,d};k,q)=E_+(\gamma_{\mu,d};\gamma_{\lambda};k^d,q)
\qquad\forall\,\lambda,\mu\in\Lambda^-.
\]
\end{rema}

\section{The $c$-function expansion}\label{c}

The existence of an expansion of the basic hypergeometric function
$\mathcal{E}_+$ in terms of basic Harish-Chandra series follows
now readily:

\begin{prop}\label{existence}
$\{\Phi(\cdot,w\cdot)\}_{w\in W_0}$ is a $\mathbb{F}$-basis of
the subspace $\phi(\mathcal{K})$ of $\mathcal{U}$. Hence
there exists a unique $\mathfrak{c}(\cdot,\cdot)=
\mathfrak{c}(\cdot,\cdot;k,q)\in\mathbb{F}$
such that
\begin{equation}\label{star}
\mathcal{E}_+(t,\gamma)=\sum_{w\in W_0}\mathfrak{c}(t,w\gamma)\Phi(t,w\gamma).
\end{equation}
\end{prop}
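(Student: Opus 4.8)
The plan is to deduce the statement formally from the structural results on the bispectral quantum KZ equations, the key inputs being that $\phi$ is an injective, $W_0\times W_0$-equivariant $\mathbb{F}$-linear map (Theorem \ref{relspaces}{\bf (ii)}) and that $\{\nabla(1,w)F\}_{w\in W_0}$ is an $\mathbb{F}$-basis of $\mathcal{K}$ (Theorem \ref{properties}{\bf (ii)}). The existence of an expansion of $\mathcal{E}_+$ in basic Harish-Chandra series is then essentially a transport-of-structure argument under $\phi$, with one genuine symmetrization step at the end.

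First I would establish the basis claim. Since $\phi$ is injective and $\mathbb{F}$-linear, it carries the $\mathbb{F}$-basis $\{\nabla(1,w)F\}_{w\in W_0}$ of $\mathcal{K}$ onto an $\mathbb{F}$-basis $\{\phi(\nabla(1,w)F)\}_{w\in W_0}$ of its image $\phi(\mathcal{K})\subseteq\mathcal{U}$. By the $W_0\times W_0$-equivariance of $\phi$ and the defining relation $\Phi=\phi(F)$ (Definition \ref{bHCdef}), the element $\phi(\nabla(1,w)F)$ is the image of $\Phi$ under the geometric action of $(1,w)$ on $\mathcal{U}$, which acts on the spectral variable only; explicitly $\phi(\nabla(1,w)F)=\Phi(\cdot,w^{-1}\cdot)$. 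As $w\mapsto w^{-1}$ permutes $W_0$, the resulting set is exactly $\{\Phi(\cdot,w\cdot)\}_{w\in W_0}$, which is therefore an $\mathbb{F}$-basis of $\phi(\mathcal{K})$.

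It then remains to expand $\mathcal{E}_+$ in this basis and to recast the expansion in the symmetric form \eqref{star}. By Corollary \ref{imagephi} we have $\mathcal{E}_+\in\phi(\mathcal{K}^{W_0\times W_0})\subseteq\phi(\mathcal{K})$, so there are unique coefficients $c_w\in\mathbb{F}$ with $\mathcal{E}_+(t,\gamma)=\sum_{w\in W_0}c_w(t,\gamma)\Phi(t,w\gamma)$. The one place where care is needed — and what I expect to be the only step beyond bookkeeping — is to upgrade the family $(c_w)_{w\in W_0}$ to a single quasiconstant $\mathfrak{c}$. For this I would exploit that $\mathcal{E}_+\in\mathcal{U}^{W_0\times W_0}$ is $W_0$-invariant in its second argument. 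Substituting $\gamma\mapsto v\gamma$ in the expansion, using $\mathcal{E}_+(t,v\gamma)=\mathcal{E}_+(t,\gamma)$, reindexing the sum by $w'=wv$, and invoking uniqueness of coefficients relative to the basis $\{\Phi(\cdot,w'\cdot)\}_{w'\in W_0}$ yields the cocycle-type relation $c_{w'}(t,\gamma)=c_{w'v^{-1}}(t,v\gamma)$ for all $v,w'\in W_0$. Specializing $v=w'$ gives $c_{w'}(t,\gamma)=c_1(t,w'\gamma)$, so that $\mathfrak{c}:=c_1\in\mathbb{F}$ satisfies $c_w(t,\gamma)=\mathfrak{c}(t,w\gamma)$ and hence \eqref{star} holds.

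Finally, uniqueness of $\mathfrak{c}$ follows by reading off the coefficient of the basis vector $\Phi(\cdot,\cdot)=\Phi(\cdot,1\cdot)$: any $\mathfrak{c}\in\mathbb{F}$ for which \eqref{star} holds must equal the unique coefficient of $\Phi(\cdot,1\cdot)$ in the basis expansion of $\mathcal{E}_+$, namely $c_1$. To keep the bookkeeping honest I would also note that $(t,\gamma)\mapsto\mathfrak{c}(t,w\gamma)$ again lies in $\mathbb{F}$ for each $w\in W_0$, since $\mathbb{F}$ is stable under the $W_0$-action on the spectral variable, so the right-hand side of \eqref{star} is a well-defined $\mathbb{F}$-combination of the basis elements $\Phi(\cdot,w\cdot)$.
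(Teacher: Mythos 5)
Your proposal is correct and follows essentially the same route as the paper's proof: transport the $\mathbb{F}$-basis $\{\nabla(1,w)F\}_{w\in W_0}$ of $\mathcal{K}$ through the injective, $W_0\times W_0$-equivariant map $\phi$ to get the basis $\{\Phi(\cdot,w\cdot)\}_{w\in W_0}$ of $\phi(\mathcal{K})$, expand $\mathcal{E}_+$ in this basis via Corollary \ref{imagephi}, and use the $W_0$-invariance of $\mathcal{E}_+$ in the spectral variable to collapse the coefficients to a single quasiconstant. The only difference is one of exposition: you make explicit the substitution--reindexing argument yielding the relation $c_{w'}(t,\gamma)=c_{w'v^{-1}}(t,v\gamma)$ and the $W_0$-stability of $\mathbb{F}$, steps which the paper compresses into the single sentence that $W_0\times W_0$-invariance of $\mathcal{E}_+$ forces $\mathfrak{c}_w(t,\gamma)=\mathfrak{c}_1(t,w\gamma)$.
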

\begin{proof}
Since $\phi:\mathcal{K}\rightarrow\mathcal{U}$ is $W_0\times W_0$-equivariant,
we have
\[
\phi(\nabla(1,w)F)=\Phi(\cdot,w^{-1}\cdot),\qquad w\in W_0.
\]
The first statement then follows from Theorem \ref{properties}{\bf (ii)}.
By Corollary \ref{imagephi} we have 
\[
\mathcal{E}_+(t,\gamma)=\sum_{w\in W_0}\mathfrak{c}_w(t,\gamma)\Phi(t,w\gamma)
\]
in $\phi(\mathcal{K})\subset\mathcal{U}$ for unique 
$\mathfrak{c}_w\in\mathbb{F}$
($w\in W_0$). Since $\mathcal{E}_+$ is $W_0\times W_0$-invariant,
$\mathfrak{c}_w(t,\gamma)=\mathfrak{c}_1(t,w\gamma)$ for $w\in W_0$.
\end{proof}
We are now going to derive an explicit expression of
the expansion coefficient 
$\mathfrak{c}\in\mathbb{F}$ in terms of theta functions.
As a first step we will single out the $t$-dependence.
The following preliminary lemma 
is closely related to \cite[Thm. 4.1 (i)]{CWhit} (reduced case).

Set
\[
\widetilde{\rho}:=\varpi_1+\cdots+\varpi_n\in\Lambda^+.
\]
\begin{lem}\label{hlambda}
Fix generic $\gamma\in T$ with $|\gamma^{-\alpha_i}|\leq 1$ 
for $1\leq i\leq n$.
For $\lambda\in\Lambda^-$ define 
$h_\lambda\in\mathcal{M}(T)$ by 
\[
h_\lambda(t):=\gamma_0^{-\lambda}\gamma^{-w_0\lambda}
\left(\prod_{\alpha\in R_{0,s}^+}\frac{\bigl(-q_\theta k_0k_{2a_0}^{-1}t^{-\alpha};
q_\theta^2\bigr)_{-(\lambda,\alpha^\vee)/2}}
{\bigl(-q_\theta k_{2a_0}k_0^{-1}t^{-\alpha};q_\theta^2
\bigr)_{-(\lambda,\alpha^\vee)/2}}\right)\frac{\mathcal{E}_+(q^\lambda t,\gamma)}
{G_{k^\tau,q}(t)G_{k^{d\tau},q}(\gamma)}
\] 
(in the reduced case we have $k_0=k_{2a_0}$, hence in this case
the product over $R_{0,s}^+$ is one; in the nonreduced case
$(\lambda,\alpha^\vee)$ is even for all 
$\alpha\in R_{0,s}^+$). 
Then $h_\lambda$ is holomorphic on $T$ and  
\[
\lim_{r\rightarrow\infty}h_{-r\widetilde{\rho}}(t)
\]
converges to a holomorphic function $h_{-\infty}(t)$ in $t\in T$.
\end{lem}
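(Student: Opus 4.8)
The plan is to strip the Gaussian factors off $\mathcal{E}_+$ and work with the entire renormalized kernel. First I would set $\mathcal{F}(t,\gamma):=\mathcal{E}_+(t,\gamma)/\bigl(G_{k^\tau,q}(t)G_{k^{d\tau},q}(\gamma)\bigr)$, which by Theorem \ref{QSP}{\bf (i)} is holomorphic on $T\times T$, and let $\Pi_\lambda(t)$ denote the product of $q$-shifted factorials appearing in $h_\lambda$. Since $\mathcal{E}_+(q^\lambda t,\gamma)=G_{k^\tau,q}(q^\lambda t)G_{k^{d\tau},q}(\gamma)\mathcal{F}(q^\lambda t,\gamma)$, the definition rewrites as
\[
h_\lambda(t)=\gamma_0^{-\lambda}\gamma^{-w_0\lambda}\,\Pi_\lambda(t)\,
\frac{G_{k^\tau,q}(q^\lambda t)}{G_{k^\tau,q}(t)}\,\mathcal{F}(q^\lambda t,\gamma),
\]
so both assertions reduce to controlling the $\lambda$-dependent prefactor multiplying the holomorphic function $\mathcal{F}(q^\lambda t,\gamma)$.

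For holomorphy I would show this prefactor is a constant (in $t$) multiple of the single monomial $t^\lambda$. In the reduced case $\Pi_\lambda\equiv1$ and $G_{k^\tau,q}=\vartheta(\cdot)^{-1}$, so the functional equation $\vartheta(q^\lambda t)=q^{-|\lambda|^2/2}t^{-\lambda}\vartheta(t)$ gives $G_{k^\tau,q}(q^\lambda t)/G_{k^\tau,q}(t)=q^{|\lambda|^2/2}t^\lambda$, whence $h_\lambda(t)=\gamma_0^{-\lambda}\gamma^{-w_0\lambda}q^{|\lambda|^2/2}\,t^\lambda\,\mathcal{F}(q^\lambda t,\gamma)$ is holomorphic. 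In the nonreduced case I would factor $G_{k^\tau,q}(\cdot)^{-1}$ through the Jacobi triple product as in Remark \ref{thetarem}, into a product over $R_{0,s}^+$ of infinite $q$-shifted factorials. A short manipulation of ratios $(x;q_\theta^2)_\infty/(q_\theta^{\mp2p}x;q_\theta^2)_\infty$ (with $p=-(\lambda,\alpha^\vee)/2\in\mathbb{Z}_{\ge0}$) then shows that the denominator of $\Pi_\lambda$ cancels the factor produced by the $-\alpha$ terms of $G_{k^\tau,q}(q^\lambda t)/G_{k^\tau,q}(t)$, while its numerator cancels the residual finite product from the $+\alpha$ terms; what remains is again a constant multiple of $t^\lambda$, using the orthogonal-basis identity $\sum_{\alpha\in R_{0,s}^+}\tfrac12(\lambda,\alpha^\vee)\,\alpha=\lambda$. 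Hence uniformly $h_\lambda(t)=C_\lambda(\gamma)\,t^\lambda\,\mathcal{F}(q^\lambda t,\gamma)$ for an explicit nonzero $C_\lambda(\gamma)$, manifestly holomorphic on all of $T$.

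For the limit I would feed the already-established expansion \eqref{star} (Proposition \ref{existence}) into the definition of $h_\lambda$. Using that $\mathfrak{c}\in\mathbb{F}$ is quasiconstant, that $\Phi=\mathcal{W}\phi(H)$, and \eqref{frW} in the form $\mathcal{W}(q^\lambda t,w\gamma)=\gamma_0^\lambda(w\gamma)^{w_0\lambda}\mathcal{W}(t,w\gamma)$, the factors $\gamma_0^{\pm\lambda}$ cancel and one is left with
\[
h_\lambda(t)=\frac{\Pi_\lambda(t)}{G_{k^\tau,q}(t)G_{k^{d\tau},q}(\gamma)}
\sum_{w\in W_0}\Bigl(\tfrac{w\gamma}{\gamma}\Bigr)^{w_0\lambda}\mathcal{W}(t,w\gamma)\,\mathfrak{c}(t,w\gamma)\,\phi(H)(q^\lambda t,w\gamma).
\]
Now set $\lambda=-r\widetilde{\rho}$ and let $r\to\infty$. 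Here $\Pi_{-r\widetilde{\rho}}(t)$ converges (its finite $q$-shifted factorials tend to infinite ones, giving $1$ in the reduced case); since $w_0\widetilde{\rho}=-\widetilde{\rho}$ and $\widetilde{\rho}-w^{-1}\widetilde{\rho}\in Q_+$, the hypothesis $|\gamma^{-\alpha_i}|\le1$ yields $|((w\gamma)/\gamma)^{w_0\lambda}|=|((w\gamma)/\gamma)^{\widetilde{\rho}}|^{r}\le1$, equal to $1$ for $w=1$ and decaying geometrically to $0$ for $w\neq1$ (generic $\gamma$); and $q^{-r\widetilde{\rho}}t$ enters the sector $B_\epsilon^{-1}$ locally uniformly in $t$, so $\phi(H)(q^{-r\widetilde{\rho}}t,w\gamma)$ tends to its leading coefficient by Theorem \ref{properties}{\bf (iv)--(v)}. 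Only the $w=1$ term survives, and $h_{-r\widetilde{\rho}}$ converges, locally uniformly off the pole divisors of the $\mathcal{W}(\cdot,w\gamma)$ and $\mathfrak{c}(\cdot,w\gamma)$, to an explicit product of theta functions times $\mathfrak{c}(t,\gamma)$ — exactly the factorization of the $t$-dependence needed for the $c$-function.

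The main obstacle is promoting this to a genuinely holomorphic limit on all of $T$, i.e.\ across the divisor just removed. I would handle it by combining the first part (each $h_\lambda$ is holomorphic on $T$) with a normal families (Vitali) argument: locally uniform convergence off a divisor together with local uniform boundedness on $T$ forces the limit to be holomorphic everywhere and the convergence to be locally uniform. The real work is the local uniform bound, which is the precise theta-type (Gaussian) growth estimate for $\mathcal{F}$ carried by the normally convergent Macdonald--Mehta series of Theorem \ref{QSP}{\bf (i)}: the weight asymptotics $\Xi^+(\mu)\sim q^{|\mu|^2/2}$, forced by $G_{k^{\tau d},q}\sim\vartheta(\cdot)^{-1}$, the functional equation of $\vartheta$, and the convergence of $N(\mu)$, exactly balance the normalizing Gaussian $q^{|\lambda|^2/2}$. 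In the reduced case this balance is essentially \cite[Thm. 4.1(i)]{CWhit}; the nonreduced case follows by the same asymptotic analysis after the Jacobi triple product factorization used above.
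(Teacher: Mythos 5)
Your first part is correct and coincides with the paper's own argument: the paper likewise observes that the finite $q$-shifted-factorial product times $G_{k^\tau,q}(q^\lambda t)/G_{k^\tau,q}(t)$ is regular in $t$ (your sharper computation, that it is a nonzero constant times the monomial $t^\lambda$ via the orthogonal basis $R_{0,s}^+$, is true), and that the $G$-renormalization of $\mathcal{E}_+$ is holomorphic on $T\times T$, whence each $h_\lambda$ is holomorphic. Your reduction of the second part is also structurally sound and not circular: Proposition \ref{existence} precedes the lemma, and inserting \eqref{star}, the quasiconstancy of $\mathfrak{c}$, \eqref{frW} and Theorem \ref{properties} (iv)--(v) gives locally uniform convergence of $h_{-r\widetilde{\rho}}$ off a divisor --- this is literally the first line of \eqref{altexpression}, which the paper carries out in Corollary \ref{decompcor} --- and a Vitali/normal-families argument then upgrades this to all of $T$, provided one has local uniform boundedness of $\{h_\lambda\}$ on compacta of $T$. (A caveat: geometric decay of the $w\neq 1$ terms needs $|\gamma^{-\alpha_i}|<1$; on the stratum $|\gamma^{-\alpha_i}|=1$ genericity does not prevent $|\gamma^{w\widetilde{\rho}-\widetilde{\rho}}|=1$, so the off-divisor limit identification should be restricted to $<1$, exactly as the paper does in Corollary \ref{decompcor}.)

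The genuine gap is the uniform bound itself, which you rightly call ``the real work'' but then only assert. The Gaussian-balance heuristic does not close: to bound $\mathcal{F}(q^\lambda t,\gamma)$ from the series of Theorem \ref{QSP}(i) one needs, besides $\Xi^+(\mu)\approx q^{|\mu|^2/2}$, estimates for the Laurent polynomials $E_+(\gamma_{\mu,\tau};q^\lambda t;k^\tau,q)$ that are uniform in $\mu$, in $\lambda$, and for $t$ in compacta of $T$; their size is governed by $\max_{w\in W_0}|q^{(\lambda,w\mu)}t^{w\mu}|$ together with bounds on all of their coefficients, and without such polynomial estimates the completion-of-square argument remains a heuristic. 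Citing \cite[Thm. 4.1(i)]{CWhit} imports these estimates only in the reduced case, and ``the same asymptotic analysis'' in the nonreduced case is precisely the unproven work. The paper proves the bound by a mechanism absent from your proposal: since $\psi(\mathcal{E})\in\mathcal{K}^{W_0\times W_0}$ (Theorem \ref{relspaces}), one has $(\psi\mathcal{E})(q^\lambda t,\gamma)=C_{(\tau(\lambda),1)}(q^\lambda t,\gamma)(\psi\mathcal{E})(t,\gamma)$, so $h_\lambda$ is expressed through matrix coefficients of $D_\lambda(t)=\gamma_0^{-\lambda}\gamma^{-w_0\lambda}C_{(\tau(\lambda),1)}(q^\lambda t,\gamma)$; the asymptotically free solution $F=\mathcal{W}H$ of Theorem \ref{AF} then yields the factorization $N(t,\lambda)=A(q^\lambda t)M(\lambda)A(t)^{-1}$, with $A$ and $A^{-1}$ bounded on $B_\epsilon^{-1}$ and $M(\lambda)$ diagonal with entries $\gamma^{-w_0\lambda+ww_0\lambda}$, bounded precisely because $|\gamma^{-\alpha_i}|\leq 1$. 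Supplying either this qKZ argument or the full polynomial estimates is what your write-up is missing.
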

\begin{proof}
Observe that
\[
\left(\prod_{\alpha\in R_{0,s}^+}\frac{\bigl(-q_\theta k_0k_{2a_0}^{-1}t^{-\alpha};
q_\theta^2\bigr)_{-(\lambda,\alpha^\vee)/2}}
{\bigl(-q_\theta k_{2a_0}k_0^{-1}t^{-\alpha};q_\theta^2
\bigr)_{-(\lambda,\alpha^\vee)/2}}\right)\frac{G_{k^\tau,q}(q^\lambda t)}
{G_{k^\tau,q}(t)}
\]
is a regular function in $t\in T$, and
$G_{k^\tau,q}(q^\lambda t)^{-1}G_{k^{d\tau},q}(\gamma)^{-1}\mathcal{E}_+(q^\lambda 
t,\gamma)$ is holomorphic in $(t,\gamma)\in T\times T$. 
Hence $h_\lambda(t)$ is holomorphic.
It remains to show that the $h_\lambda(t)$ 
($\lambda\in\Lambda^-$) are uniformly bounded for $t$ in compacta of $T$. 
Without loss
of generality it suffices to prove uniform boundedness for $t$ in compacta
of $B_\epsilon^{-1}$
for sufficiently small $\epsilon>0$.

Set 
\[
\mathcal{F}(t,\gamma):=\frac{\mathcal{E}(t,\gamma)}{G_{k^\tau,q}(t)
G_{k^{d\tau},q}(\gamma)},
\]  
which is the holomorphic part of the nonsymmetric basic hypergeometric
function $\mathcal{E}$.
For $w\in W_0$ let $v_w^*$ be the $\mathbb{K}$-linear functional on
$\mathbb{K}\otimes M$ mapping $v_{w^\prime}$ to $\delta_{w,w^\prime}$.
Recall from Theorem \ref{relspaces} that
\[
\mathcal{E}_+=\pi^t(C_+)\mathcal{E}=\phi(\psi\mathcal{E})
\]
and $\psi\mathcal{E}\in\mathcal{K}$. Hence
\[
(\psi\mathcal{E})(q^\lambda t,\gamma)=C_{(\tau(\lambda),1)}(q^\lambda t,\gamma)
(\psi\mathcal{E})(t,\gamma),
\] 
so that
\begin{equation*}
\begin{split}
h_\lambda(t)&=\frac{k_{w_0}^{-1}}{\sum_{w\in W_0}k_w^{-2}}
\prod_{\alpha\in R_{0,s}^+}\frac{\bigl(-q_\theta k_0k_{2a_0}^{-1}t^{-\alpha};
q_\theta^2\bigr)_{-(\lambda,\alpha^\vee)/2}}
{\bigl(-q_\theta k_{2a_0}k_0^{-1}t^{-\alpha};q_\theta^2
\bigr)_{-(\lambda,\alpha^\vee)/2}}\\
&\times
\sum_{w,w^\prime\in W_0}k_w^{-1}
\bigl(\pi^t(T_{w^\prime w_0})\mathcal{F}\bigr)(t,\gamma)
v_w^*\bigl(\gamma_0^{-\lambda}\gamma^{-w_0\lambda}
C_{(\tau(\lambda),1)}(q^\lambda t,\gamma)
v_{w^\prime}\bigr).
\end{split}
\end{equation*}
It thus suffices to give bounds for
$v_w^*(D_\lambda(t)v_{w^\prime})$ ($\lambda\in\Lambda^-$),
uniform for $t$ in compacta of $B_\epsilon^{-1}$,
where
\[D_\lambda(t):=\gamma_0^{-\lambda}\gamma^{-w_0\lambda}
C_{(\tau(\lambda),1)}(q^\lambda t,\gamma).
\]
Recall from Theorem \ref{AF} 
the asymptotically free solution $F(\cdot,\cdot)=\mathcal{W}(\cdot,\cdot)
H(\cdot,\cdot)$ of the bispectral quantum KZ equations. Then
\[
\gamma^{w_0\lambda-ww_0\lambda}D_\lambda(t)H_w(t)=H_w(q^\lambda t),\qquad w\in W_0
\]
with 
\[
H_w(t):=\bigl(\nabla(e,w)H\bigr)(t,\gamma)=
C_{(1,w)}(t,\gamma)H(t,w^{-1}\gamma)
\]
(note that the $C_{(1,w)}(t,\gamma)$ ($w\in W_0$) do not depend on
$t$). Furthermore, writing
\[
H_{w^\prime}(t)=\sum_{w\in W_0}a_{w}^{w^\prime}(t)v_{w},
\]
the matrix $A(t):=\bigl(a_w^{w^\prime}(t)\bigr)_{w,w^\prime\in W_0}$
is invertible (cf. the proof of \cite[Lem. 5.12]{vMSt}) and both
$A(t)$ and $A(t)^{-1}$ are uniformly bounded for $t\in B_\epsilon^{-1}$.
 Writing $N(t,\lambda)$ for the
matrix $\bigl(v_w^*(D_\lambda(t)v_{w^\prime})\bigr)_{w,w^\prime\in W_0}$
we conclude that
\[
N(t,\lambda)=A(q^\lambda t)M(\lambda)A(t)^{-1},
\]
where $M(\lambda)$ is the diagonal matrix 
$\bigl(\delta_{w,w^\prime}\gamma^{-w_0\lambda+ww_0\lambda}\bigr)_{w,w^\prime\in W_0}$.
The matrix coefficients of $M(\lambda)$ are 
bounded as function of $\lambda\in\Lambda^-$ since
$|\gamma^{-\alpha_i}|\leq 1$ for all $i$.
This implies the required boundedness conditions for the
matrix coefficients of $N(t,\lambda)$.
\end{proof}

Set
\begin{equation}\label{cvartheta}
\mathfrak{c}^\vartheta(t,\gamma;k,q):=
\frac{\vartheta(\gamma_0^{-1}(k_0^{-1}k_{2a_0})^{\rho_s^\vee}t(w_0\gamma)^{-1})
\vartheta(\gamma_0t)\vartheta(\gamma_{0,d}^{-1}\gamma)
\vartheta((k_0^{-1}k_{2a_0})^{\rho_s^\vee}\gamma_{0,d})}
{\vartheta((k_{2a_0}k_{2\theta}^{-1})^{\rho_s^\vee}\gamma)
\vartheta(t(w_0\gamma)^{-1})\vartheta((k_0^{-1}k_{2a_0})^{\rho_s^\vee}t)}
\end{equation}
Observe that $\mathfrak{c}^\vartheta$ satisfies the functional equations
\begin{equation*}
\begin{split}
\mathfrak{c}^\vartheta(q^\lambda t,\gamma)&=\mathfrak{c}^\vartheta(t,\gamma)\\
\mathfrak{c}^\vartheta(t,q^\lambda\gamma)&=
\gamma_{0,d}^{2\lambda}\mathfrak{c}^\vartheta(t,\gamma)
\end{split}
\end{equation*}
for $\lambda\in\Lambda$.
Since $\gamma_0^{-1}\gamma_{0,d}=(k_{0}k_{2\theta}^{-1})^{\rho_s^\vee}$
we in addition have
\begin{equation}\label{normalizationc}
\mathfrak{c}^\vartheta(\gamma_{0,d},\gamma)=
\frac{1}{\mathcal{W}(\gamma_{0,d},\gamma)}.
\end{equation} 
\begin{cor}\label{decompcor}
The expansion coefficient 
$\mathfrak{c}\in\mathbb{F}$ in \eqref{star} is of the form
\begin{equation}\label{fact}
\mathfrak{c}(t,\gamma)=\mathfrak{c}^\vartheta(t,\gamma)\mathfrak{c}^\theta(\gamma)
\end{equation}
for a unique 
$\mathfrak{c}^\theta(\cdot)=
\mathfrak{c}^\theta(\cdot;k,q)\in\mathcal{M}(T)$ satisfying
the functional equations
\begin{equation}\label{fr}
\mathfrak{c}^\theta(q^\lambda\gamma)=
\gamma_{0,d}^{-2\lambda}\mathfrak{c}^\theta(\gamma)\qquad
\forall\,\lambda\in\Lambda.
\end{equation}
\end{cor}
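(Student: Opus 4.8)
The plan is to reduce the statement to the single assertion that the ratio $\mathfrak{c}(t,\gamma)/\mathfrak{c}^\vartheta(t,\gamma)$ does not depend on $t$, and then to extract that $t$-dependence from the asymptotics already packaged in Lemma~\ref{hlambda}. Granting $t$-independence, one sets $\mathfrak{c}^\theta(\gamma):=\mathfrak{c}(t,\gamma)/\mathfrak{c}^\vartheta(t,\gamma)\in\mathcal{M}(T)$, which is well defined since $\mathfrak{c}^\vartheta\not\equiv 0$; this also forces uniqueness. The functional equation \eqref{fr} is then immediate from $\mathfrak{c}\in\mathbb{F}$ (so $\mathfrak{c}(t,q^\lambda\gamma)=\mathfrak{c}(t,\gamma)$) together with the functional equation $\mathfrak{c}^\vartheta(t,q^\lambda\gamma)=\gamma_{0,d}^{2\lambda}\mathfrak{c}^\vartheta(t,\gamma)$ recorded after \eqref{cvartheta}, since then $\mathfrak{c}^\theta(q^\lambda\gamma)=\mathfrak{c}(t,\gamma)/\bigl(\gamma_{0,d}^{2\lambda}\mathfrak{c}^\vartheta(t,\gamma)\bigr)=\gamma_{0,d}^{-2\lambda}\mathfrak{c}^\theta(\gamma)$.

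To isolate the $t$-dependence I would fix generic $\gamma$ with $|\gamma^{-\alpha_i}|\le 1$ and substitute the $c$-function expansion \eqref{star} into the definition of $h_\lambda$. Writing $\Phi=\mathcal{W}\phi(H)$ as in \eqref{decPhi} and using that $\mathfrak{c}\in\mathbb{F}$ is $q^\Lambda$-periodic in $t$ while $\mathcal{W}(q^\lambda t,w\gamma)=\gamma_0^\lambda(w\gamma)^{w_0\lambda}\mathcal{W}(t,w\gamma)$ by \eqref{frW}, the prefactor $\gamma_0^{-\lambda}\gamma^{-w_0\lambda}$ in $h_\lambda$ turns the $w$-th summand into $\mathfrak{c}(t,w\gamma)\,\gamma^{w^{-1}w_0\lambda-w_0\lambda}\,\mathcal{W}(t,w\gamma)\,\phi(H(q^\lambda t,w\gamma))$ times the explicit short-root and $G$ factors. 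Specializing $\lambda=-r\widetilde{\rho}$ and letting $r\to\infty$, the cross exponent becomes $\gamma^{-r(w^{-1}w_0\widetilde{\rho}-w_0\widetilde{\rho})}$; since $\widetilde{\rho}=\varpi_1+\cdots+\varpi_n$ is regular, $w^{-1}w_0\widetilde{\rho}-w_0\widetilde{\rho}\in Q_+\setminus\{0\}$ for $w\neq e$, so this factor tends to $0$ because $|\gamma^{-\alpha_i}|\le 1$. Hence only the $w=e$ term survives in $h_{-\infty}$. Simultaneously $\phi(H(q^\lambda t,\gamma))$ converges to $\bigl(\mathcal{L}_q(\gamma)/\mathcal{S}_{k^d,q}(\gamma)\bigr)\phi(v_{w_0})$, the $t\to\infty$ leading term supplied by Theorem~\ref{properties}{\bf (iv)}--{\bf (v)}, a nonzero function of $\gamma$. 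This yields an identity $h_{-\infty}(t)=A(\gamma)\,K(t)\,\mathfrak{c}(t,\gamma)\,\mathcal{W}(t,\gamma)$ with $A(\gamma)$ an explicit nonzero $\gamma$-function and $K(t)$ the explicit $t$-factor given by the limiting short-root products divided by $G_{k^\tau,q}(t)$.

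The decisive point is that the right-hand side is, apart from the quasiconstant $\mathfrak{c}$, a product of theta functions in $t$: by the Jacobi triple product identity and Remark~\ref{thetarem}, $K(t)$ is a scalar multiple of $\vartheta\bigl((k_0^{-1}k_{2a_0})^{\rho_s^\vee}t\bigr)^{-1}$, while $\mathcal{W}(t,\gamma)=\vartheta(t(w_0\gamma)^{-1})/\bigl(\vartheta(\gamma_0 t)\vartheta(\gamma_{0,d}^{-1}\gamma)\bigr)$. A direct computation of the $t$-quasiperiodicity of $h_{-\infty}$, using $q^\Lambda$-periodicity of $\mathfrak{c}$ in $t$, equation \eqref{frW}, and $\vartheta(q^\mu t)=q^{-|\mu|^2/2}t^{-\mu}\vartheta(t)$, shows that $h_{-\infty}(q^\mu t)=q^{-|\mu|^2/2}a^{-\mu}t^{-\mu}h_{-\infty}(t)$ with $a=\gamma_0^{-1}(k_0^{-1}k_{2a_0})^{\rho_s^\vee}(w_0\gamma)^{-1}$, i.e. exactly the multiplier of the single theta $\vartheta(a\,t)$. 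Since $h_{-\infty}$ is holomorphic on $T$ by Lemma~\ref{hlambda}, and since the space of holomorphic functions $f$ on $T$ with $f(q^\mu t)=q^{-|\mu|^2/2}a^{-\mu}t^{-\mu}f(t)$ is one-dimensional — the Fourier coefficients must satisfy $c_{\nu+\mu}=q^{(\mu,\nu)+|\mu|^2/2}a^\mu c_\nu$, forcing $c_\nu=c_0\,q^{|\nu|^2/2}a^\nu$ and hence $f=c_0\,\vartheta(a\,t)$ — I conclude $h_{-\infty}(t)=C(\gamma)\vartheta(a\,t)$ for a $t$-independent scalar $C(\gamma)$. Substituting back expresses $\mathfrak{c}(t,\gamma)$ as $\vartheta(a\,t)$ divided by $K(t)\mathcal{W}(t,\gamma)$ times a $\gamma$-function; comparison with the explicit theta factors of $\mathfrak{c}^\vartheta$ in \eqref{cvartheta} shows that all $t$-dependence cancels in $\mathfrak{c}/\mathfrak{c}^\vartheta$, as required. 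By meromorphicity the $t$-independence, established for generic $\gamma$ in the stated region, holds identically.

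The main obstacle I anticipate is the theta-function bookkeeping in the nonreduced case: turning the limit of the short-root $q$-shifted factorial products divided by $G_{k^\tau,q}(t)$ into an honest higher-rank theta via the Jacobi triple product (Remark~\ref{thetarem}), verifying that the resulting $t$-multiplier of $h_{-\infty}$ matches that of $\vartheta(a\,\cdot)$ exactly, and checking that the explicit factors of $\mathfrak{c}^\vartheta$ in \eqref{cvartheta} are calibrated so that the cancellation is precise. The two conceptual steps — the asymptotic collapse onto the $w=e$ term and the one-dimensionality of the relevant theta space — are robust and uniform across the reduced, $\textup{GL}_m$ and nonreduced cases.
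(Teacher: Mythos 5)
Your proposal follows essentially the same route as the paper's own proof: it substitutes the expansion \eqref{star} into the limit function $h_{-\infty}$ of Lemma \ref{hlambda}, collapses the sum onto the $w=e$ term using the regularity of $\widetilde{\rho}$ and the decay of $\gamma^{-r(w^{-1}w_0\widetilde{\rho}-w_0\widetilde{\rho})}$, reads off the $t$-quasiperiodicity of $h_{-\infty}$, and concludes via the one-dimensionality of the corresponding space of theta-like holomorphic functions (which the paper merely asserts and you verify by a Fourier-coefficient computation), then cancels the $t$-dependence against $\mathfrak{c}^\vartheta$. Two small slips to fix: the collapse argument requires the strict inequality $|\gamma^{-\alpha_i}|<1$ (with $\le 1$ the cross factors need not tend to zero), and by the identity \eqref{useful} your factor $K(t)$ equals $\vartheta\bigl((k_0^{-1}k_{2a_0})^{\rho_s^\vee}t\bigr)$ itself rather than its inverse — which is in fact what your subsequently stated quasiperiodicity multiplier for $h_{-\infty}$ presupposes, so the rest of the argument stands as written.
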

\begin{proof}
In view of the functional equations of 
$\mathfrak{c}^\vartheta(t,\gamma)$ in $\gamma$ it
suffices to prove the factorization for generic $t,\gamma\in T$ satisfying 
$|\gamma^{-\alpha_i}|<1$ for all $1\leq i\leq n$. Since
\begin{equation}\label{useful}
\frac{1}{G_{k^\tau,q}(t)}
\prod_{\alpha\in R_{0,s}^+}\frac{\bigl(-q_\theta k_0k_{2a_0}^{-1}t^{-\alpha};
q_\theta^2\bigr)_{\infty}}{\bigl(-q_\theta k_{2a_0}k_0^{-1}t^{-\alpha};
q_\theta^2\bigr)_{\infty}}=\vartheta\bigl((k_0^{-1}k_{2a_0})^{\rho_s^\vee}t\bigr)
\end{equation}
(which is trivial in the reduced case and follows 
by a direct computation in the nonreduced case) we have
\begin{equation}\label{altexpression}
\begin{split}
h_{-\infty}(t)&=\frac{\vartheta((k_0^{-1}k_{2a_0})^{\rho_s^\vee}t)}
{G_{k^{d\tau},q}(\gamma)}
\sum_{w\in W_0}\mathfrak{c}(t,w\gamma)\mathcal{W}(t,w\gamma)
\lim_{r\rightarrow\infty}\gamma^{r(w_0\widetilde{\rho}-w^{-1}w_0\widetilde{\rho})}
(\phi H)(q^{-r\widetilde{\rho}}t,w\gamma)\\
&=\frac{\vartheta((k_0^{-1}k_{2a_0})^{\rho_s^\vee}t)
\mathfrak{c}(t,\gamma)\mathcal{W}(t,\gamma)\mathcal{L}_q(\gamma)}
{G_{k^{d\tau},q}(\gamma)\mathcal{S}_{k^d,q}(\gamma)
\sum_{w\in W_0}k_w^2}
\end{split}
\end{equation}
by Theorem \ref{properties},
Proposition \ref{existence}, \eqref{frW}, \eqref{decPhi} 
and the assumption that $|\gamma^{-\alpha_i}|<1$ for all 
$1\leq i\leq n$. 
It follows from this expression that the holomorphic function $h_{-\infty}$ 
satisfies
\[
h_{-\infty}(q^\lambda t)=q^{-\frac{|\lambda|^2}{2}}
\bigl(\gamma_0^{-1}(k_0^{-1}k_{2a_0})^{\rho_s^\vee}t(w_0\gamma)^{-1}\bigr)^{-\lambda}
h_{-\infty}(t)\quad \forall\,\lambda\in\Lambda.
\]
Consequently 
\[
h_{-\infty}(t)=\textup{cst}\,\vartheta\bigl(\gamma_0^{-1}
(k_0^{-1}k_{2a_0})^{\rho_s^\vee}
t(w_0\gamma)^{-1}\bigr)
\]
for some $\textup{cst}\in\mathbb{C}$ independent of $t\in T$.
Combined with the second line of \eqref{altexpression} one obtains
the desired result.
\end{proof}
The factor 
$\mathfrak{c}^\vartheta(t,\gamma)$ is highly dependent on our specific choice of
(selfdual, meromorphic) prefactor $\mathcal{W}$
in the selfdual basic Harish-Chandra
series. We will see later that this factor simplifies
when considering the expansion of the basic
hypergeometric function
in terms of the monic basic Harish-Chandra series. In particular it will
no longer depend on the first torus variable $t\in T$.

The next step is to compute $\mathfrak{c}^\theta(\gamma)=
\mathfrak{c}^\theta(\gamma;k,q)$ 
explicitly. We will obtain an expression 
in terms of the Jacobi theta function 
$\theta(\cdot;q)$.
Recall that
\begin{equation}\label{expl1}
\frac{\mathcal{S}_{k,q}(t)}{\mathcal{L}_q(t)}=
\prod_{\stackrel{\alpha\in R_0^+,}{r\in\mathbb{Z}_{>0}}}
c_{\alpha+r\frac{|\alpha|^2}{2}c}(t;k^{-1},q).
\end{equation}
We define a closely related meromorphic function $c(\cdot)=
c_{k,q}(\cdot)\in\mathcal{M}(T)$
by
\begin{equation}\label{expl2}
c(t):=\prod_{\stackrel{\alpha\in R_0^+,}{r\in\mathbb{Z}_{\geq 0}}}c_{-\alpha+
r\frac{|\alpha|^2}{2}c}(t;k,q).
\end{equation}
An explicit computation yields expressions of
both \eqref{expl1} and \eqref{expl2} in terms of
of $q$-shifted factorials. For $c(t)$ it reads
\begin{equation}\label{Vr}
c(t)=\prod_{\alpha\in R_0^+}\frac{\bigl(k_\alpha^2t^{-\alpha};q_\alpha\bigr)_{\infty}}
{\bigl(t^{-\alpha};q_\alpha\bigr)_{\infty}}
\end{equation}
in the reduced case and
\begin{equation}\label{Vnr}
c(t)=\prod_{\alpha\in R_{0,l}^+}\frac{\bigl(k_\vartheta^2t^{-\alpha};q_\vartheta
\bigr)_{\infty}}{\bigl(t^{-\alpha};q_\vartheta\bigr)_{\infty}}
\prod_{\beta\in R_{0,s}^+}\frac{\bigl(at^{-\beta}, bt^{-\beta}, ct^{-\beta},
dt^{-\beta};q_\theta^2\bigr)_{\infty}}{\bigl(t^{-2\beta};q_\theta^2\bigr)_{\infty}}
\end{equation}
in the nonreduced case,
where $R_{0,l}^+\subset R_0^+$ is the subset of positive long roots and
$\{a,b,c,d\}$ are given by \eqref{AWpar}. The product formula of 
$c_{k^d,q}(\gamma)$ in the reduced case is the $q$-analog 
of the Gindikin-Karpelevic \cite{GK} product formula 
of the Harish-Chandra $c$-function as well as of its extension 
to the Heckman-Opdam theory (see \cite[Part I, Def. 3.4.2]{HS}). 

Taking the product 
\[
\frac{\mathcal{S}_{k,q}(t)c_{k,q}(t)}
{\mathcal{L}_q(t)}
\]
of \eqref{expl1} and \eqref{expl2}, the $q$-shifted factorials can be 
pairwise combined to yield the following 
explicit expression in terms of Jacobi's
theta function $\theta(\cdot;q)$.
\begin{lem}\label{thetaexpr}
{\bf (i)} In the reduced case,
\[
\frac{\mathcal{S}_{k,q}(t)c_{k,q}(t)}{\mathcal{L}_q(t)}=
\prod_{\alpha\in R_0^+}\frac{\theta(k_\alpha^2t^{-\alpha};q_\alpha)}
{\theta(t^{-\alpha};q_\alpha)}.
\]
{\bf (ii)} In the nonreduced case,
\[
\frac{\mathcal{S}_{k,q}(t)c_{k,q}(t)}{\mathcal{L}_q(t)}=
\prod_{\alpha\in R_{0,l}^+}\frac{\theta(k_\vartheta^2t^{-\alpha};q_\vartheta)}
{\theta(t^{-\alpha};q_\vartheta)}
\prod_{\beta\in R_{0,s}^+}\frac{\theta(at^{-\beta};q_\theta^2)
\theta(bt^{-\beta};q_\theta^2)\theta(ct^{-\beta};q_\theta^2)
\theta(dt^{-\beta};q_\theta^2)}{\bigl(q_\theta^2;q_\theta^2)_{\infty}^3
\theta(t^{-2\beta};q_\theta^2)}.
\]
\end{lem}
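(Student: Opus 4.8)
The plan is to reduce the identity to a root-by-root application of the Jacobi triple product identity \eqref{Jacobi}. Multiplying the product formula \eqref{expl1} for $\mathcal{S}_{k,q}/\mathcal{L}_q$ by the explicit expression \eqref{Vr} (reduced case) or \eqref{Vnr} (nonreduced case) for $c_{k,q}$, I would organize the resulting $q$-shifted factorials according to the $W_0$-orbit of the finite root $\alpha$, and show that for each $\alpha$ the factors coming from $c_{k,q}$ and those coming from $\mathcal{S}_{k,q}/\mathcal{L}_q$ are exactly complementary, so that each $\theta(x;q)=\bigl(q;q\bigr)_\infty\bigl(x;q\bigr)_\infty\bigl(q/x;q\bigr)_\infty$ can be assembled. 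Throughout I use $t_q^{\alpha+r\frac{|\alpha|^2}{2}c}=q_\alpha^rt^\alpha$.

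For the reduced case, and for the long roots in the nonreduced case, the pairing is immediate. The factor of $c_{k,q}$ indexed by $\alpha$ is $\bigl(k_\alpha^2t^{-\alpha};q_\alpha\bigr)_\infty/\bigl(t^{-\alpha};q_\alpha\bigr)_\infty$. On the other hand, since $2a\notin R$ for these roots, the relevant $c_a(\cdot;k^{-1},q)$ in \eqref{ca} reduces to a single quotient, so \eqref{expl1} contributes $\prod_{r>0}(1-k_\alpha^{-2}q_\alpha^rt^\alpha)/(1-q_\alpha^rt^\alpha)=\bigl(q_\alpha k_\alpha^{-2}t^\alpha;q_\alpha\bigr)_\infty/\bigl(q_\alpha t^\alpha;q_\alpha\bigr)_\infty$. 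The product of the two is $\theta(k_\alpha^2t^{-\alpha};q_\alpha)/\theta(t^{-\alpha};q_\alpha)$ once the compensating $\bigl(q_\alpha;q_\alpha\bigr)_\infty$ factors are inserted, which proves \textbf{(i)} and the long-root half of \textbf{(ii)}.

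The nonreduced short roots constitute the genuine computation. Here $2a\in R$, so each $c_{\beta+r\frac{|\beta|^2}{2}c}(t;k^{-1},q)$ in \eqref{expl1} is the full four-term expression \eqref{ca}, evaluated at $t_q^{\beta+r\frac{|\beta|^2}{2}c}=q_\theta^rt^\beta$. I would split the product over $r>0$ by the parity of $r$: for even $r$ the relevant multiplicities are $k_\theta,k_{2\theta}$, while for odd $r$ they are $k_0,k_{2a_0}$. Collecting the even-$r$ and odd-$r$ subproducts into $q_\theta^2$-shifted factorials and matching against the parameters \eqref{AWpar}, the numerator becomes $\prod_{x\in\{a,b,c,d\}}\bigl(q_\theta^2x^{-1}t^\beta;q_\theta^2\bigr)_\infty$, while the two denominator contributions combine, via $\bigl(A;q_\theta^4\bigr)_\infty\bigl(Aq_\theta^2;q_\theta^4\bigr)_\infty=\bigl(A;q_\theta^2\bigr)_\infty$, into the single factor $\bigl(q_\theta^2t^{2\beta};q_\theta^2\bigr)_\infty$. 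Multiplying by the short-root factor $\prod_x\bigl(xt^{-\beta};q_\theta^2\bigr)_\infty/\bigl(t^{-2\beta};q_\theta^2\bigr)_\infty$ of $c_{k,q}$ from \eqref{Vnr} and applying \eqref{Jacobi} four times then yields exactly the claimed expression, the explicit $\bigl(q_\theta^2;q_\theta^2\bigr)_\infty^3$ in the denominator accounting for the four $\theta$'s in the numerator against the single $\theta(t^{-2\beta};q_\theta^2)$ in the denominator.

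The main obstacle is the bookkeeping in the short-root case: correctly identifying which of $k_\theta,k_{2\theta},k_0,k_{2a_0}$ attaches to $\beta+r\frac{|\beta|^2}{2}c$ for each parity of $r$, and checking that the powers of $q_\theta$ produced by $t_q^a=q_\theta^rt^\beta$ line up precisely with the four parameters \eqref{AWpar}. Once the parity assignment and the telescoping of the denominator are in place, the remainder is a direct application of \eqref{Jacobi}.
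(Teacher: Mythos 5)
Your proposal is correct and follows exactly the route the paper intends: the paper states the lemma as the result of pairwise combining the $q$-shifted factorials of \eqref{expl1} and \eqref{expl2} (equivalently \eqref{Vr}, \eqref{Vnr}) via \eqref{Jacobi}, which is what you carry out. Your parity split of the affine roots $\beta+r\frac{|\beta|^2}{2}c$ over short $\beta$ (even $r$ in $W(\theta)$ with multiplicities $k_\theta,k_{2\theta}$; odd $r$ in $W(a_0)$ with $k_0,k_{2a_0}$), the telescoping $\bigl(A;q_\theta^4\bigr)_\infty\bigl(Aq_\theta^2;q_\theta^4\bigr)_\infty=\bigl(A;q_\theta^2\bigr)_\infty$ of the denominators, and the count giving $\bigl(q_\theta^2;q_\theta^2\bigr)_\infty^3$ all check out.
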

In Subsection \ref{Abqkz} we have seen that 
\[\frac{\mathcal{L}_q(\gamma)}{\mathcal{S}_{k^d,q}(\gamma)
\sum_{w\in W_0}k_w^2}
\]
is the leading coefficient of the power series expansion of
$(\phi H)(t,\gamma)$ in the variables 
$t^{-\alpha_i}$ ($1\leq i\leq n$). On the other hand
it is
closely related to the evaluation formula for the symmetric 
Macdonald-Koornwinder
polynomials, see Remark \ref{consequenceremark}.
In the next proposition we show that the meromorphic function
$c(t)$ governs the asymptotics of the symmetric Macdonald-Koornwinder
polynomial.
In the reduced case it
is due to Cherednik \cite[Lemma 4.3]{CWhit} (for the rank one case
see, e.g., \cite{IW}).
\begin{prop}\label{poll}
For $\epsilon>0$ sufficiently small,
\begin{equation}\label{toto}
\lim_{r\rightarrow\infty}\gamma_{0,d}^{r\widetilde{\rho}}t^{rw_0\widetilde{\rho}}
E_+(\gamma_{-r\widetilde{\rho}};t)=\frac{c(t)}{c(\gamma_{0,d})},
\end{equation}
normally converging for $t$ in compacta of $B_\epsilon^{-1}$. 
\end{prop}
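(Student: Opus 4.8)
The plan is to reduce the statement, via Proposition \ref{HCpol}, to a stabilization property of the coefficients $\Gamma_\mu$ of the monic basic Harish-Chandra series as the spectral parameter runs to antidominant infinity, and then to identify the stabilized series with the explicit product $c_{k,q}$. A preliminary observation I would record is that $w_0\widetilde\rho=-\widetilde\rho$: the element $-w_0$ permutes the simple roots (say $\alpha_i\mapsto\alpha_{i^*}$), and since $-\Lambda=\Lambda$ it preserves $\Lambda$; as it sends $\widetilde\varpi_i$ to $\widetilde\varpi_{i^*}$ this forces $m_{i^*}=m_i$, whence $-w_0\varpi_i=\varpi_{i^*}$ and $-w_0\widetilde\rho=\widetilde\rho$. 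In particular, for $\lambda:=-r\widetilde\rho\in\Lambda^-$ we get $w_0\lambda=r\widetilde\rho$ and $t^{rw_0\widetilde\rho}=t^{-r\widetilde\rho}$.

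Working for generic $k$ (so that Proposition \ref{HCpol} applies and $\mathcal L_q(\gamma_{-r\widetilde\rho})\neq0$), I would combine $E_+(\gamma_\lambda;t)=P_\lambda^+(t)/P_\lambda^+(\gamma_{0,d})=\widehat\Phi_\eta(t,\gamma_\lambda)/\widehat\Phi_\eta(\gamma_{0,d},\gamma_\lambda)$ with the expansion of $\widehat\Phi_\eta$ and the identity $\widehat{\mathcal W}_\eta(t,\gamma_\lambda)=(\eta\gamma_{0,d})^{-w_0\lambda}t^{w_0\lambda}$ extracted from the proof of Proposition \ref{HCpol}. The factors $(\eta\gamma_{0,d})^{-w_0\lambda}$ cancel between numerator and denominator, and multiplying by $\gamma_{0,d}^{r\widetilde\rho}t^{rw_0\widetilde\rho}$ (using the symmetry above) collapses the monomial prefactors, yielding
\[
\gamma_{0,d}^{r\widetilde\rho}\,t^{rw_0\widetilde\rho}\,E_+(\gamma_{-r\widetilde\rho};t)=\frac{\sum_{\mu\in Q_+}\Gamma_\mu(\gamma_{-r\widetilde\rho})\,t^{-\mu}}{\sum_{\mu\in Q_+}\Gamma_\mu(\gamma_{-r\widetilde\rho})\,\gamma_{0,d}^{-\mu}}.
\]
This reduces the proposition to showing that $\lim_{r\to\infty}\Gamma_\mu(\gamma_{-r\widetilde\rho})=\widetilde d_\mu$ exists for each $\mu\in Q_+$, where $c_{k,q}(t)=\sum_{\mu\in Q_+}\widetilde d_\mu t^{-\mu}$ is the expansion of \eqref{Vr}--\eqref{Vnr} on $B_\epsilon^{-1}$; since $\widetilde d_0=1=\Gamma_0$ the normalizations already match, so convergence of the coefficients forces numerator and denominator to tend to $c_{k,q}(t)$ and $c_{k,q}(\gamma_{0,d})$.

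For the convergence I would use that the $\Gamma_\mu(\gamma)$ are fixed by a triangular recursion, obtained by inserting $\widehat{\mathcal W}_\eta(t,\gamma)\sum_\mu\Gamma_\mu(\gamma)t^{-\mu}$ into the Macdonald $q$-difference equations \eqref{Macdonaldqdiff}, whose coefficients are rational in the $\gamma^{-\alpha}$. Along $\gamma=\gamma_{-r\widetilde\rho}=q^{-r\widetilde\rho}\gamma_0$ one has $\gamma^{-\alpha_i}=q^{r(\widetilde\rho,\alpha_i)}\gamma_0^{-\alpha_i}\to0$, so each recursion coefficient has a finite limit and every $\Gamma_\mu(\gamma_{-r\widetilde\rho})$ converges. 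For the uniformity in $\mu$ needed to interchange limit and summation, and hence to get normal convergence on compacta of $B_\epsilon^{-1}$, I would invoke the transfer-matrix bounds already set up in the proof of Lemma \ref{hlambda}: by inversion symmetry and selfduality (Theorem \ref{QSP}) one has $E_+(\gamma_{-r\widetilde\rho};t;k,q)=\mathcal E_+(q^{-r\widetilde\rho}\gamma_0,t;k^d,q)$, which places us precisely in the regime of Lemma \ref{hlambda} for the dual parameters, with second variable $t$ satisfying $|t^{-\alpha_i}|<\epsilon\le1$; the invertible matrix $A(t)=\bigl(a_w^{w^\prime}(t)\bigr)_{w,w^\prime\in W_0}$ of coefficients of $F=\mathcal W H$, together with the diagonal matrix whose entries $t^{(w-1)r\widetilde\rho}$ are bounded and tend to $0$ for $w\neq1$, supplies the required estimates.

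The main obstacle is the identification of the limit $\sum_\mu\widetilde d_\mu t^{-\mu}$ with the explicit $c$-function $c_{k,q}(t)$ of \eqref{expl2}. Here the plan is to show that the limiting recursion is a first-order $q$-difference system, one equation per root direction, whose unique solution with constant term $1$ is exactly the product \eqref{expl2}: each factor $c_{-\alpha+r\frac{|\alpha|^2}{2}c}(\cdot;k,q)$ governs a single direction, so the verification reduces to matching the $q$-shift functional equations of $c_{k,q}$ against the limiting recursion, i.e.\ to the rank-one Heine/Askey--Wilson building blocks. In the reduced case this is Cherednik's computation \cite[Lem. 4.3]{CWhit}; the nonreduced case requires redoing the rank-one step with the four parameters \eqref{AWpar}. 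Finally, both sides of \eqref{toto} being meromorphic in $k$, the identity proved for generic $k$ extends to the admissible range by continuity.
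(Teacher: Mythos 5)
Your core mechanism is the same one the paper (following Cherednik \cite[Lem.\ 4.3]{CWhit}) uses: along $\gamma=\gamma_{-r\widetilde{\rho}}$ the coefficient recursion coming from the Macdonald $q$-difference equations degenerates as $r\rightarrow\infty$ into a residual system whose normalized power series solution is $c_{k,q}$, and your analysis of the $\Gamma_\mu$'s is the paper's gauged-operator computation in different clothing. The preliminary steps are sound: $w_0\widetilde{\rho}=-\widetilde{\rho}$ is correct, the reduction through Proposition \ref{HCpol}, the identity $E_+(\gamma_{-r\widetilde{\rho}};t;k,q)=\mathcal{E}_+(q^{-r\widetilde{\rho}}\gamma_0,t;k^d,q)$, and the appeal to Lemma \ref{hlambda} (which precedes Proposition \ref{poll} and does not depend on it) are all legitimate.

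There is, however, a genuine gap at the normalization step, in the sentence claiming that ``convergence of the coefficients forces numerator and denominator to tend to $c_{k,q}(t)$ and $c_{k,q}(\gamma_{0,d})$''. The denominator is $\sum_{\mu}\Gamma_\mu(\gamma_{-r\widetilde{\rho}})\gamma_{0,d}^{-\mu}=\gamma_{0,d}^{-r\widetilde{\rho}}P^+_{-r\widetilde{\rho}}(\gamma_{0,d})$, an evaluation at the \emph{fixed} point $\gamma_{0,d}$, and this point does not lie in $B_\epsilon^{-1}$: in the reduced case $|\gamma_{0,d}^{-\alpha_i}|=k_{\alpha_i}^2$, a constant independent of $\epsilon$, whereas $\epsilon$ must be taken small for the Harish-Chandra expansions you use to converge. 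Coefficientwise convergence together with uniform bounds on compacta of $B_\epsilon^{-1}$ gives no control whatsoever on the values of the polynomials $N_r(t)=\sum_\mu\Gamma_\mu(\gamma_{-r\widetilde{\rho}})t^{-\mu}$ at a point outside $B_\epsilon^{-1}$ (a sequence of polynomials can converge uniformly on a small polydisc and behave arbitrarily at a prescribed exterior point). Without the denominator limit you only obtain $\lim_r\gamma_{0,d}^{r\widetilde{\rho}}t^{rw_0\widetilde{\rho}}E_+(\gamma_{-r\widetilde{\rho}};t)=A\,c_{k,q}(t)$ with an undetermined constant $A$ --- exactly the ``up to normalization'' ambiguity at which the paper's own sketch stops. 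Two repairs are available: (i) extend the transfer-matrix domination to a region of the form $\{t\,:\,|t^{-\alpha}|<\min(1,k_\alpha^2q_\alpha^{-1})\ \forall\,\alpha\in R_0^+\}$, which contains $\gamma_{0,d}$; this is feasible because Lemma \ref{hlambda} tolerates $|\gamma^{-\alpha_i}|\leq 1$ in its fixed variable, but it must be stated and used, since your write-up scopes all uniformity to $B_\epsilon^{-1}$; or (ii) compute $\lim_r\gamma_{0,d}^{-r\widetilde{\rho}}P^+_{-r\widetilde{\rho}}(\gamma_{0,d})$ in closed form from the evaluation formula of Remark \ref{consequenceremark} combined with Lemma \ref{thetaexpr}, whose theta-function functional equations cancel the monomial growth in $r$ exactly and identify the constant. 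Separately, the closing appeal to ``continuity in $k$'' is not valid as stated: convergence for generic $k$ plus meromorphy of the limit does not imply convergence at non-generic $k$; one needs bounds locally uniform in $k$ (Vitali), or, simpler, one should run the recursion argument directly on the coefficients of $t^{rw_0\widetilde{\rho}}P^+_{-r\widetilde{\rho}}(t)$ as the paper does: the polynomials and their recursion exist for all admissible $k$, and for each fixed $\mu$ the resonance factors are nonzero for all sufficiently large $r$, so Proposition \ref{HCpol} and its genericity hypothesis can be bypassed entirely.
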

\begin{proof}
The proof in the reduced case (see \cite{CWhit})
consists of analyzing the gauged
Macdonald $q$-difference equations $t^{rw_0\widetilde{\rho}}\circ D_p\circ
t^{-rw_0\widetilde{\rho}}$ ($p\in\mathbb{C}[T]^{W_0}$)
in the limit $r\rightarrow\infty$ and observing that the left and 
right hand side of \eqref{toto} are the (up to normalization)
unique solution of the resulting residual $q$-difference equations that
have a series expansion in $t^{-\mu}$ ($\mu\in Q_+$), normally converging
for $t$ in compacta of $B_\epsilon^{-1}$.
This proof can be straightforwardly extended to the nonreduced case.
\end{proof}

\begin{thm}\label{mainresult}
We have
\[
\mathcal{E}_+(t,\gamma;k,q)=\sum_{w\in W_0}\mathfrak{c}(t,w\gamma;k,q)
\Phi(t,w\gamma;k,q)
\]
with $\mathfrak{c}(\cdot,\cdot)=
\mathfrak{c}(\cdot,\cdot;k,q)\in\mathbb{F}$ given by
\begin{equation}\label{explicitc}
\mathfrak{c}(t,\gamma;k,q)=
\mathfrak{c}^\vartheta(t,\gamma;k,q)\mathfrak{c}^\theta(\gamma;k,q)
\end{equation}
where $\mathfrak{c}^\vartheta(\cdot,\cdot)=\mathfrak{c}^\vartheta(\cdot,\cdot;k,q)
\in\mathbb{K}$ 
is given by \eqref{cvartheta} and 
$\mathfrak{c}^\theta(\cdot)=\mathfrak{c}^\theta(\cdot;k,q)
\in\mathcal{M}(T)$ is given by
\begin{equation}\label{ctheta}
\mathfrak{c}^\theta(\gamma;k,q)=
\frac{\mathcal{S}_{k^d,q}(\gamma)c_{k^d,q}(\gamma)}
{\mathcal{L}_q(\gamma)}\frac{\sum_{w\in W_0}k_w^2}{c_{k^d,q}(\gamma_0)}.
\end{equation}
\end{thm}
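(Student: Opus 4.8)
The plan is to build on Corollary \ref{decompcor}, which already supplies the factorization $\mathfrak{c}(t,\gamma)=\mathfrak{c}^\vartheta(t,\gamma)\mathfrak{c}^\theta(\gamma)$ with $\mathfrak{c}^\vartheta$ the explicit theta quotient \eqref{cvartheta} and with $\mathfrak{c}^\theta\in\mathcal{M}(T)$ subject to the functional equations \eqref{fr}. The first torus variable $t$ has thus already been removed, so the whole content of the theorem is the evaluation \eqref{ctheta} of the single factor $\mathfrak{c}^\theta(\gamma)$. I would determine it by probing \eqref{star} along the antidominant spectral ray $\gamma=\gamma_{-r\widetilde{\rho}}$ as $r\to\infty$; this is the $\gamma$-direction counterpart of the ray $\lambda=-r\widetilde{\rho}$ used in Lemma \ref{hlambda} to strip off the $t$-dependence.

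First I would analyse the left-hand side. Since $-r\widetilde{\rho}\in\Lambda^-$, the reduction formula Theorem \ref{QSP}{\bf (iv)} gives $\mathcal{E}_+(t,\gamma_{-r\widetilde{\rho}})=E_+(\gamma_{-r\widetilde{\rho}};t)$, and Proposition \ref{poll} then yields the normalized limit
\[
\lim_{r\to\infty}\gamma_{0,d}^{r\widetilde{\rho}}t^{rw_0\widetilde{\rho}}\mathcal{E}_+(t,\gamma_{-r\widetilde{\rho}})=\frac{c(t)}{c(\gamma_{0,d})},
\]
locally uniformly for $t\in B_\epsilon^{-1}$. This is the step that produces the Gindikin--Karpelevic factor: the $c$-function $c=c_{k,q}$ appears on the left, and its self-dual shadow $c_{k^d,q}(\gamma)$ is exactly what must surface in \eqref{ctheta}, the normalizing $c(\gamma_{0,d})$ turning into $c_{k^d,q}(\gamma_0)$ under the duality $k\leftrightarrow k^d$, $\gamma_{0,d}\leftrightarrow\gamma_0$.

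Next I would feed the same ray into the right-hand side of \eqref{star}. Writing $\Phi(t,w\gamma;k,q)=\Phi((w\gamma)^{-1},t^{-1};k^d,q)$ by the self-duality of the basic Harish-Chandra series, a sign count shows that $(w\gamma_{-r\widetilde{\rho}})^{-1}$ enters the asymptotic sector $B_\epsilon^{-1}$ if and only if $w=w_0$; for that term the leading behaviour is controlled by the theta prefactor together with the leading coefficient $\mathcal{L}_q/(\mathcal{S}_{k^d,q}\sum_{w\in W_0}k_w^2)$ of $\phi(H)$ supplied by Theorem \ref{properties}{\bf (iv),(v)}. Multiplying the explicit quotient $\mathfrak{c}^\vartheta(t,w_0\gamma_{-r\widetilde{\rho}})$ by these factors and using \eqref{fr} to move $\mathfrak{c}^\theta(w_0\gamma_{-r\widetilde{\rho}})$ to the base point, I expect the entire $t$-dependence to cancel against $c(t)/c(\gamma_{0,d})$ as a consistency check, leaving a single $\gamma$-scalar that fixes $\mathfrak{c}^\theta$ along the ray together with the overall constant $\sum_{w\in W_0}k_w^2/c_{k^d,q}(\gamma_0)$. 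To promote these ray-values to the stated meromorphic identity on all of $T$, I would note via Lemma \ref{thetaexpr} that the candidate $\mathcal{S}_{k^d,q}c_{k^d,q}/\mathcal{L}_q$ is itself a product of Jacobi theta quotients and hence also satisfies \eqref{fr}, so the ratio of $\mathfrak{c}^\theta$ to the candidate is $q^\Lambda$-periodic in $\gamma$. Matching the prescribed $\gamma$-singularities---the $\gamma$-poles of $\Phi$ are carried by $\mathcal{S}_{k^d,q}(\gamma)$ and the leading zeros by $\mathcal{L}_q(\gamma)$ by Theorem \ref{properties}---shows this ratio is holomorphic, hence constant on the compact quotient $T/q^\Lambda$, and that constant is the scalar already found along the ray.

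The main obstacle is the asymptotic analysis of the right-hand side: one must prove that only the $w=w_0$ summand survives the normalized limit, bound the growth of $\Phi(t,w\gamma_{-r\widetilde{\rho}})$ for $w\neq w_0$, and interchange $\lim_{r\to\infty}$ with the finite Weyl-group sum. This is the precise analogue of Lemma \ref{hlambda} in the $\gamma$-direction, and as there the cleanest route is to transport the problem through $\psi$ to the asymptotically free solution $F$ of the bispectral quantum KZ equations and to use that the transition matrix $A(t)=\bigl(a_w^{w'}(t)\bigr)_{w,w'\in W_0}$ relating the frame $\{H_{w'}\}_{w'\in W_0}$ to $\{v_w\}_{w\in W_0}$, together with its inverse, is uniformly bounded on $B_\epsilon^{-1}$. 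The residual work is purely the bookkeeping of theta prefactors---reconciling $\mathcal{W}$, $\widehat{\mathcal{W}}_\eta$ and $\mathfrak{c}^\vartheta$ under the quasi-periodicities of $\vartheta(\cdot)$ and $\theta(\cdot;q)$---where Lemma \ref{thetaexpr} does the essential job of rewriting the $q$-shifted factorials in $\mathcal{S}_{k^d,q}c_{k^d,q}/\mathcal{L}_q$ as Jacobi theta functions.
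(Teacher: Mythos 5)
There is a genuine gap, and it sits at the final step of your plan. Your probe ray lies in the spectral variable itself: $\gamma=\gamma_{-r\widetilde{\rho}}=q^{-r\widetilde{\rho}}\gamma_0$, so every point of the ray (and of each $W_0$-translate $w\gamma_{-r\widetilde{\rho}}=q^{-rw\widetilde{\rho}}w\gamma_0$) is congruent to $w\gamma_0$ modulo $q^\Lambda$. Since $\mathfrak{c}\in\mathbb{F}$ is quasiconstant --- equivalently, since $\mathfrak{c}^\theta$ obeys \eqref{fr} --- every value $\mathfrak{c}^\theta(w\gamma_{-r\widetilde{\rho}})$ entering your limit is already determined by the single number $\mathfrak{c}^\theta(w\gamma_0)$. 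Hence the whole asymptotic computation, even if executed in full (including the suppression of the $w\neq w_0$ terms), can only ever output the one value $\mathfrak{c}^\theta(w_0\gamma_0)$; it cannot yield the identity \eqref{ctheta} of meromorphic functions on $T$. Your proposed promotion step --- the ratio of $\mathfrak{c}^\theta$ to the candidate is $q^\Lambda$-periodic, its singularities ``match'', hence it is holomorphic and therefore constant --- is circular at the decisive point: $\mathfrak{c}^\theta$ is defined abstractly as an expansion coefficient (Proposition \ref{existence}, Corollary \ref{decompcor}), and nothing in the paper controls its divisor a priori. Theorem \ref{properties} controls the poles of $\Phi$, not of $\mathfrak{c}$; to transfer divisor information from \eqref{star} to the coefficients you would need pointwise linear independence of $\{\Phi(\cdot,w\gamma^*)\}_{w\in W_0}$ over quasiconstants in $t$ at the candidate's singular points $\gamma^*$, i.e.\ exactly on the resonance loci $\mathcal{L}_q(\gamma)=0$ and $\mathcal{S}_{k^d,q}(\gamma)=0$, where such independence is unproven (and is precisely where bases of solutions tend to degenerate). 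So the passage from ray values to the meromorphic identity is a missing idea, not bookkeeping.

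Compare with how the paper escapes this trap: it puts the degeneration ray in the \emph{other} variable. With $\gamma$ held fixed and generic in the open region where all $|\gamma^{-\alpha_i}|$ are small, the paper evaluates the limit function $h_{-\infty}$ of Lemma \ref{hlambda} (built from $\mathcal{E}_+(q^{-r\widetilde{\rho}}t,\gamma)$) at the single point $t=\gamma_{0,d}$. Then $q^{-r\widetilde{\rho}}\gamma_{0,d}=\gamma_{-r\widetilde{\rho},d}$, and Theorem \ref{QSP} converts $\mathcal{E}_+(\gamma_{-r\widetilde{\rho},d},\gamma;k,q)$ into the polynomial $E_+(\gamma_{-r\widetilde{\rho},d};\gamma;k^d,q)$, so Proposition \ref{poll} --- applied with dual parameters and in the variable $\gamma$, which has remained free --- produces $c_{k^d,q}(\gamma)/c_{k^d,q}(\gamma_0)$ for \emph{all} $\gamma$ in that open set; comparing with \eqref{altexpression} and \eqref{normalizationc} gives \eqref{ctheta} there, and meromorphic continuation (together with \eqref{fr}) finishes. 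In other words, the paper's proof is exactly the self-dual image of your plan, and the dualization is not cosmetic: it is what keeps $\gamma$ ranging over an open set rather than a single $q^\Lambda$-coset. Your LHS computation via Theorem \ref{QSP}{\bf (iv)} and Proposition \ref{poll}, and your observation that only $w=w_0$ can survive, are correct; indeed, after dualizing, the suppression of the other terms is the already-established content of Lemma \ref{hlambda} and \eqref{altexpression}, so you would not even need to redo that analysis. The fix is therefore to route your ray through the first variable using the self-duality Theorem \ref{QSP}{\bf (iii)} --- but doing so turns your proposal into the paper's proof.
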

In view of Lemma \ref{thetaexpr}, formula \eqref{ctheta}
provides an explicit
expression of $\mathfrak{c}^\theta(\gamma)$ as product of Jacobi theta
functions.
\begin{proof}
Using Lemma \ref{thetaexpr} it is easy to check that
the right hand side of \eqref{ctheta} satisfies the
functional equations \eqref{fr}.
Hence it suffices to prove the explicit expression \eqref{ctheta}
of $\mathfrak{c}^\theta(\gamma)$ for generic $\gamma\in T$ such that
$|\gamma^{-\alpha_i}|$ is sufficiently small for all
$1\leq i\leq n$.
We fix such $\gamma$ in the remainder of the proof.
Recall the associated holomorphic function $h_\lambda(t)$ in $t\in T$
from Lemma \ref{hlambda}. By Theorem \ref{QSP}, 
using \eqref{useful}
for the first equality and Proposition \ref{poll} for the second
equality,
\begin{equation*}
\begin{split}
h_{-\infty}(\gamma_{0,d})&=
\frac{\vartheta\bigl((k_0^{-1}k_{2a_0})^{\rho_s^\vee}\gamma_{0,d}\bigr)}
{G_{k^{d\tau},q}(\gamma)}
\lim_{r\rightarrow\infty}
\gamma_0^{r\widetilde{\rho}}\gamma^{rw_0\widetilde{\rho}}
E_+(\gamma_{-r\widetilde{\rho},d};\gamma;k^d,q)\\
&=\frac{\vartheta\bigl((k_0^{-1}k_{2a_0})^{\rho_s^\vee}\gamma_{0,d}\bigr)
c_{k^d,q}(\gamma)}
{c_{k^d,q}(\gamma_0)G_{k^{d\tau},q}(\gamma)}.
\end{split}
\end{equation*}
On the other hand, by \eqref{altexpression}, Corollary
\ref{decompcor} and \eqref{normalizationc},
\[
h_{-\infty}(\gamma_{0,d})=
\frac{\vartheta((k_0^{-1}k_{2a_0})^{\rho_s^\vee}\gamma_{0,d})
\mathcal{L}_q(\gamma)}
{G_{k^{d\tau},q}(\gamma)\mathcal{S}_{k^d,q}(\gamma)\sum_{w\in W_0}k_w^2}
\mathfrak{c}^\theta(\gamma).
\]
Combining these two formulas 
yields the desired expression for $\mathfrak{c}^\theta(\gamma)$.
\end{proof}
A direct computation using \eqref{relationPhi} gives now
the following $c$-function expansion
of the basic hypergeometric function $\mathcal{E}_+$ in terms
of the monic basic Harish-Chandra series $\widehat{\Phi}_\eta$.
\begin{cor}\label{monicc}
For generic $\eta\in T$ we have
\begin{equation}\label{moniccformula}
\mathcal{E}_+(t,\gamma;k,q)=\widehat{c}_\eta(\gamma_0;k,q)^{-1}
\sum_{w\in W_0}\widehat{c}_\eta(w\gamma;k,q)
\widehat{\Phi}_\eta(t,w\gamma;k,q)
\end{equation}
with $\widehat{c}_\eta(\cdot)=\widehat{c}_\eta(\cdot;k,q)\in\mathcal{M}(T)$ 
explicitly given by
\[
\widehat{c}_\eta(\gamma;k,q)=\frac{\vartheta\bigl((w_0\eta)^{-1}
(k_{2a_0}k_{2\theta}^{-1})^{\rho_s^\vee}\gamma\bigr)}
{\vartheta\bigl((k_{2a_0}k_{2\theta}^{-1})^{\rho_s^\vee}\gamma\bigr)}
c_{k^d,q}(\gamma).
\]
\end{cor}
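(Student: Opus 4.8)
The plan is to obtain Corollary \ref{monicc} as a direct computation from Theorem \ref{mainresult}, substituting the relation \eqref{relationPhi} that expresses the selfdual series $\Phi$ through the monic series $\widehat{\Phi}_\eta$. Replacing $\gamma$ by $w\gamma$ in \eqref{relationPhi} and inserting it into \eqref{star}, the coefficient multiplying $\widehat{\Phi}_\eta(t,w\gamma)$ becomes
\[
\mathfrak{c}(t,w\gamma)\,\frac{\mathcal{W}(t,w\gamma)}{\widehat{\mathcal{W}}_\eta(t,w\gamma)}\,\frac{\mathcal{L}_q(w\gamma)}{\mathcal{S}_{k^d,q}(w\gamma)\sum_{v\in W_0}k_v^2}.
\]
First I would check that this expression does not depend on the first torus variable $t$. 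Writing $\mathfrak{c}=\mathfrak{c}^\vartheta\mathfrak{c}^\theta$ and inserting \eqref{cvartheta}, the definition of $\mathcal{W}$, and $\widehat{\mathcal{W}}_\eta=\widehat{\mathcal{W}}(\cdot,\cdot)/\widehat{\mathcal{W}}(\eta\gamma_{0,d},\cdot)$, the $t$-dependent part $\mathfrak{c}^\vartheta(t,\gamma)\mathcal{W}(t,\gamma)/\widehat{\mathcal{W}}_\eta(t,\gamma)$ collapses, after cancelling the common higher rank theta factors, to the $t$-free quantity
\[
\frac{\vartheta\bigl((k_0^{-1}k_{2a_0})^{\rho_s^\vee}\gamma_{0,d}\bigr)}{\vartheta\bigl((k_{2a_0}k_{2\theta}^{-1})^{\rho_s^\vee}\gamma\bigr)}\,\widehat{\mathcal{W}}(\eta\gamma_{0,d},\gamma),
\]
reconfirming that the coefficient lies in $\mathbb{F}$ and depends on $w\gamma$ alone.

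Next I would treat the remaining $\gamma$-dependence. Substituting \eqref{ctheta}, the factor $\mathfrak{c}^\theta(w\gamma)\mathcal{L}_q(w\gamma)/\bigl(\mathcal{S}_{k^d,q}(w\gamma)\sum_v k_v^2\bigr)$ telescopes to $c_{k^d,q}(w\gamma)/c_{k^d,q}(\gamma_0)$, so the coefficient of $\widehat{\Phi}_\eta(t,w\gamma)$ equals $c_{k^d,q}(\gamma_0)^{-1}$ times
\[
c_{k^d,q}(w\gamma)\,\frac{\vartheta\bigl((k_0^{-1}k_{2a_0})^{\rho_s^\vee}\gamma_{0,d}\bigr)}{\vartheta\bigl((k_{2a_0}k_{2\theta}^{-1})^{\rho_s^\vee}w\gamma\bigr)}\,\widehat{\mathcal{W}}(\eta\gamma_{0,d},w\gamma).
\]
I would then unfold $\widehat{\mathcal{W}}(\eta\gamma_{0,d},\gamma)$ from its definition, using $\gamma_0^{-1}\gamma_{0,d}=(k_0k_{2\theta}^{-1})^{\rho_s^\vee}$ to simplify the numerator theta argument to $(k_{2a_0}k_{2\theta}^{-1})^{\rho_s^\vee}\eta(w_0\gamma)^{-1}$. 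Applying the $W_0$-invariance and the inversion invariance $\vartheta(s)=\vartheta(s^{-1})$ of $\vartheta$ (immediate from \eqref{vartheta}), together with $w_0\rho_s^\vee=-\rho_s^\vee$, this theta equals $\vartheta\bigl((w_0\eta)^{-1}(k_{2a_0}k_{2\theta}^{-1})^{\rho_s^\vee}\gamma\bigr)$; evaluating at $w\gamma$ throughout, the coefficient becomes $\bigl(\kappa/c_{k^d,q}(\gamma_0)\bigr)\widehat{c}_\eta(w\gamma)$ with $\widehat{c}_\eta$ exactly as claimed and $\kappa:=\vartheta\bigl((k_0^{-1}k_{2a_0})^{\rho_s^\vee}\gamma_{0,d}\bigr)/\vartheta\bigl((k_0^{-1}k_{2a_0})^{\rho_s^\vee}\eta\gamma_{0,d}\bigr)$ a $\gamma$-independent constant.

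Finally I would match the normalization: it remains to verify $\kappa/c_{k^d,q}(\gamma_0)=\widehat{c}_\eta(\gamma_0)^{-1}$, equivalently $\widehat{c}_\eta(\gamma_0)=c_{k^d,q}(\gamma_0)/\kappa$. Using $w_0\gamma_0=\gamma_0^{-1}$ (which follows from the $W_0$-invariance of the multiplicities in \eqref{gamma0}), once more $\gamma_0^{-1}\gamma_{0,d}=(k_0k_{2\theta}^{-1})^{\rho_s^\vee}$, and the two symmetries of $\vartheta$, both the numerator and the denominator theta factors of $\widehat{c}_\eta(\gamma_0)$ reduce to those of $c_{k^d,q}(\gamma_0)/\kappa$, completing the proof. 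The only genuine difficulty is the bookkeeping of the multiplicity-dependent shift elements $(k_0^{-1}k_{2a_0})^{\rho_s^\vee}$ and $(k_{2a_0}k_{2\theta}^{-1})^{\rho_s^\vee}$ inside the theta functions; these are trivial in the reduced case, so the substance of the theta manipulations lives in the nonreduced case, and I expect the normalization check at $\gamma_0$ to be the most delicate step.
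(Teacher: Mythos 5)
Your proof is correct and follows essentially the same route as the paper: Corollary \ref{monicc} is obtained there precisely as ``a direct computation using \eqref{relationPhi}'' applied to the expansion of Theorem \ref{mainresult}, which is exactly the substitution you perform. Your detailed bookkeeping---the cancellation of the $t$-dependence, the identity $\gamma_0^{-1}\gamma_{0,d}=(k_0k_{2\theta}^{-1})^{\rho_s^\vee}$, the symmetries $\vartheta(w_0s)=\vartheta(s)=\vartheta(s^{-1})$ together with $w_0\rho_s^\vee=-\rho_s^\vee$ and $w_0\gamma_0=\gamma_0^{-1}$, and the normalization check at $\gamma_0$---is sound and fills in the computation the paper leaves implicit.
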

In the rank one case 
the $c$-function expansion of $\mathcal{E}_+$ was established
by direct computations in \cite{CO} ($\textup{GL}_2$ case)
and in \cite{KS,Stexp} (nonreduced rank one case). We return to the rank
one case and establish the connections to 
basic hypergeometric series in Section \ref{SC}.

In the reduced case, by \eqref{Vr} the coefficient $\widehat{c}_\eta$
explicitly reads
\[
\widehat{c}_\eta(\gamma)=\frac{\vartheta((w_0\eta)^{-1}\gamma)}
{\vartheta(\gamma)}
\prod_{\alpha\in R_0^+}\frac{\bigl(k_\alpha^2\gamma^{-\alpha};q_\alpha\bigr)_{\infty}}
{\bigl(\gamma^{-\alpha};q_\alpha\bigr)_{\infty}}.
\]
By \eqref{Vnr} an explicit
expression of the monic $c$-function
$\widehat{c}_\eta$ 
can also be given in the nonreduced case, 
see \eqref{cnr} for the resulting expression.

Note that the formula for $\widehat{c}_\eta(\gamma)$
simplifies for $\eta=1$ to
\[
\widehat{c}_1(\gamma;k,q)=c_{k^d,q}(\gamma).
\]
As remarked in the introduction this shows 
that $\mathcal{E}_+$ formally is a $q$-analog
of the Heckman-Opdam \cite{HO,HS,O} hypergeometric function.

The $\eta$-dependence is expected to be of importance in
the applications to harmonic analysis on noncompact quantum groups.
In \cite{KSsu11,KS} a selfdual
spherical Fourier transform on
the quantum $\textup{SU}(1,1)$ quantum group was defined and studied
whose Fourier kernel is given by the nonreduced rank one basic hypergeometric
function $\mathcal{E}_+$, which is the Askey-Wilson function
from \cite{KS} (see Subsection \ref{specialnr}). 
The Fourier transform and the Plancherel
measure were defined in terms of the Plancherel density function
\begin{equation}\label{mueta}
\mu_\eta(\gamma)=\frac{1}{\widehat{c}_\eta(\gamma)\widehat{c}_\eta(\gamma^{-1})}.
\end{equation}
The extra theta-factors compared to the familiar
Macdonald density 
\[
\mu_1(\gamma)=\frac{1}{c_{k^d,q}(\gamma)c_{k^d,q}(\gamma^{-1})}
\]
lead to an infinite set of discrete mass points in
the associated (inverse of the) Fourier transform.
In its interpretation
as spherical Fourier transform these mass points account for  
the contributions of the strange series representations to the 
Plancherel measure (the stranges series is a series
of irreducible unitary representations
of the quantized universal enveloping algebra which vanishes
in the limit $q\rightarrow 1$, see \cite{Metal}).
Crucial ingredients for obtaining the Plancherel and inversion formulas
are the explicit $c$-function expansion 
and the selfduality of the Askey-Wilson function $\mathcal{E}_+$. 
The generalization of these results to arbitrary root systems is not
known.

\section{Special cases and applications}\label{SC}
\subsection{Asymptotics of symmetric Macdonald-Koornwinder polynomials}
\label{abcd}
As a consequence of the $c$-function expansion we can establish pointwise 
asymptotics of the symmetric Mac\-do\-nald-Koornwinder polynomials
when the degree tends to infinity. The $L^2$-asymptotics was established
in \cite{RuL} for $\textup{GL}_m$, \cite{vD1} for the reduced case and
\cite{vD2} for the nonreduced case (for 
the rank one cases see e.g. \cite{IW},
\cite[\S 7.4 \& \S 7.5]{GR} and references therein).

For $\lambda\in\Lambda^-$ set 
\[
m(\lambda):=\textup{max}((\lambda,\alpha_i)\,\, | \,\,
1\leq i\leq n)\in\mathbb{R}_{\leq 0}.
\]
\begin{cor}
Fix $t\in T$ such that $\mathcal{S}_{k,q}(wt)\not=0$ for all $w\in W_0$.
Then
\begin{equation}\label{asymp}
E_+(\gamma_\lambda;t;k,q)=\sum_{w\in W_0}
\frac{c_{k,q}(wt)}{c_{k,q}(\gamma_{0,d})}\gamma_{0,d}^\lambda t^{w^{-1}w_0\lambda}
+\mathcal{O}(q^{-m(\lambda)})
\end{equation}
as $m(\lambda)\rightarrow -\infty$.
\end{cor}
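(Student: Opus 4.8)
The plan is to reduce the statement to the $c$-function expansion of the basic hypergeometric function, but applied in the \emph{dual} picture, where the spectral parameter $\gamma_\lambda$ plays the role of the space variable. First I would use the reduction to symmetric Macdonald--Koornwinder polynomials, Theorem \ref{QSP}{\bf (iv)}: for $\lambda\in\Lambda^-$ one has $\lambda_-=\lambda$, so $E_+(\gamma_\lambda;t;k,q)=\mathcal{E}_+(t,\gamma_\lambda;k,q)$. Next I would move $\gamma_\lambda$ into the first slot. Combining the selfduality $\mathcal{E}_+(t,\gamma;k,q)=\mathcal{E}_+(\gamma^{-1},t^{-1};k^d,q)$ (Theorem \ref{QSP}{\bf (iii)}) with the inversion symmetry of the dual function (Theorem \ref{QSP}{\bf (ii)}) yields
\[
E_+(\gamma_\lambda;t;k,q)=\mathcal{E}_+(\gamma_\lambda^{-1},t^{-1};k^d,q)=\mathcal{E}_+(\gamma_\lambda,t;k^d,q).
\]
The purpose of these two flips is the location of the first argument: since $\gamma_\lambda=q^\lambda\gamma_0$ for $\lambda\in\Lambda^-$, we have $\gamma_\lambda^{-\alpha_i}=q^{-(\lambda,\alpha_i)}\gamma_0^{-\alpha_i}\to 0$ as $m(\lambda)\to-\infty$, so $\gamma_\lambda\in B_\epsilon^{-1}$ once $m(\lambda)$ is sufficiently negative. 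This is exactly the asymptotic sector in which the basic Harish-Chandra series of $\mathcal{E}_+(\cdot,\cdot;k^d,q)$ converges.

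Then I would apply the monic $c$-function expansion of Corollary \ref{monicc} with $\eta=1$ to the dual function, using the simplification $\widehat{c}_1(\cdot;k^d,q)=c_{(k^d)^d,q}(\cdot)=c_{k,q}(\cdot)$ and $\gamma_0(k^d)=\gamma_{0,d}$:
\[
\mathcal{E}_+(\gamma_\lambda,t;k^d,q)=c_{k,q}(\gamma_{0,d})^{-1}\sum_{w\in W_0}c_{k,q}(wt)\,\widehat{\Phi}_1(\gamma_\lambda,wt;k^d,q).
\]
The genericity hypothesis $\mathcal{S}_{k,q}(wt)\neq 0$ for all $w\in W_0$ is precisely what guarantees that each $\widehat{\Phi}_1(\cdot,wt;k^d,q)$ is regular, via the description of the singularities in the second variable (Theorem \ref{properties}{\bf (iii)}, dualized). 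The crucial simplification is the value of the prefactor: by the normalization identity \eqref{good} with $k\to k^d$ and $\eta=1$, its reference lattice point is $q^\lambda\gamma_0(k^{dd})=q^\lambda\gamma_0=\gamma_\lambda$, so that \emph{at this exact point}
\[
\widehat{\mathcal{W}}_1(\gamma_\lambda,wt;k^d,q)=\gamma_{0,d}^\lambda(wt)^{w_0\lambda}=\gamma_{0,d}^\lambda t^{w^{-1}w_0\lambda}.
\]
Since the series expansion of $\widehat{\Phi}_1$ (the Corollary following Definition \ref{bHCdef}) has leading coefficient $\Gamma_0\equiv 1$, the $\mu=0$ term of each summand produces exactly $\gamma_{0,d}^\lambda t^{w^{-1}w_0\lambda}$, so the leading contribution is the claimed sum $\sum_{w\in W_0}\frac{c_{k,q}(wt)}{c_{k,q}(\gamma_{0,d})}\gamma_{0,d}^\lambda t^{w^{-1}w_0\lambda}$.

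It remains to control the remainder. Writing $\widehat{\Phi}_1(\gamma_\lambda,wt;k^d,q)=\gamma_{0,d}^\lambda t^{w^{-1}w_0\lambda}\sum_{\mu\in Q_+}\Gamma_\mu(wt;k^d,q)\gamma_\lambda^{-\mu}$, the contribution of $\mu\neq 0$ is governed by $\gamma_\lambda^{-\mu}=q^{-(\lambda,\mu)}\gamma_0^{-\mu}$. For $\mu=\sum_i c_i\alpha_i\in Q_+\setminus\{0\}$ one has $-(\lambda,\mu)\geq\min_i(-(\lambda,\alpha_i))=-m(\lambda)$, hence $|\gamma_\lambda^{-\mu}|\leq q^{-m(\lambda)}|\gamma_0^{-\mu}|$. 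The main obstacle is to upgrade this term-by-term bound to a uniform estimate of the entire tail: I would invoke the normal convergence of the expansion of $\widehat{\Phi}_1(\cdot,wt;k^d,q)$ on compacta of $B_\epsilon^{-1}$ to dominate $\sum_{\mu\neq 0}|\Gamma_\mu(wt)|\,|\gamma_\lambda^{-\mu}|$ by $q^{-m(\lambda)}$ times a convergent constant, so that the remainder is $\mathcal{O}(q^{-m(\lambda)})$ relative to the leading terms. The genuinely delicate point is that $\gamma_\lambda$ does not stay in a fixed compactum but runs off to the deep end of $B_\epsilon^{-1}$, so the uniformity of the estimate must be established along this family; this is the analogue of Cherednik's argument \cite[Lemma 4.3]{CWhit} in the reduced case, and I expect it to require the explicit exponential decay of $q^{-(\lambda,\mu)}$ in $\mu$ to offset any growth of the coefficients $\Gamma_\mu(wt)\gamma_0^{-\mu}$.
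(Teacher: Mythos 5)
Your proof is correct and follows essentially the same route as the paper: both arguments first dualize via Theorem \ref{QSP}{\bf (ii)-(iv)} to get $E_+(\gamma_\lambda;t;k,q)=\mathcal{E}_+(\gamma_\lambda,t;k^d,q)$, then apply the $c$-function expansion at dual parameters with $\gamma_\lambda$ playing the role of the spatial variable in the asymptotic sector, and finally read off the leading term of the Harish-Chandra series together with a tail estimate in $\gamma_\lambda^{-\mu}$. The only cosmetic difference is that you invoke the monic expansion (Corollary \ref{monicc} with $\eta=1$, using \eqref{good} to evaluate the prefactor), whereas the paper uses the selfdual expansion of Theorem \ref{mainresult} combined with \eqref{relationPhi}, \eqref{frW}, \eqref{normalizationc} and \eqref{explicitc} --- but since Corollary \ref{monicc} is derived from exactly these identities, the two computations coincide.
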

\begin{proof}
By the $c$-function expansion in selfdual form, 
Theorem \ref{QSP}{\bf (ii)-(iv)}, \eqref{relationPhi} 
and the expression $\widehat{\Phi}_\eta=
\widehat{\mathcal{W}}_\eta\phi(\widehat{H})$ 
we have for $\lambda\in\Lambda^-$,
\begin{equation*}
\begin{split}
E_+(\gamma_\lambda;t;k,q)&=\mathcal{E}_+(\gamma_\lambda,t;k^d,q)\\
&=\sum_{w\in W_0}\mathfrak{c}(\gamma_\lambda,wt;k^d,q)
\mathcal{W}(\gamma_\lambda,wt;k^d,q)\frac{\mathcal{L}_q(wt)}
{\mathcal{S}_{k,q}(wt)\sum_{v\in W_0}k_v^2}
(\phi\widehat{H})(\gamma_\lambda,wt;k^d,q).
\end{split}
\end{equation*}
The corollary now follows easily from 
the asymptotic series expansion in $\gamma_\lambda^{-\alpha}$
($\alpha\in Q_+$) of $(\phi\widehat{H})(\gamma_\lambda,wt,k^d;q)$,
together with \eqref{frW}, \eqref{normalizationc} and the explicit
expression \eqref{explicitc} of $\mathfrak{c}\in\mathbb{F}$. 
\end{proof}

\subsection{The nonreduced case}\label{specialnr}
We realize the root system $R_0\subset V_0=V$ of type
$\textup{B}_n$ as
$R_0=\{\pm\epsilon_i\}_{i=1}^n\cup\{\pm(\epsilon_i\pm\epsilon_j)\}_{1\leq 
i<j\leq n}$, with $\{\epsilon_i\}_{i=1}^n$ a fixed 
orthonormal basis of $V$.
We take as ordered basis
\[
\Delta_0=(\epsilon_1-\epsilon_2,\ldots,\epsilon_{n-1}-\epsilon_n,\epsilon_n)
\]
so that $R_{0,s}^+=\{\epsilon_i\}_{i=1}^n$, $R_{0,l}^+=
\{\epsilon_i\pm\epsilon_j\}_{1\leq i<j\leq n}$ and $\theta=\epsilon_1$,
$\vartheta=\epsilon_1+\epsilon_2$. We include $n=1$ 
as $R_0=\{\pm\epsilon_1\}$, the root system of type $\textup{A}_1$ 
(it amounts to omitting in the formulas below the
factors involving the long roots 
$\{\pm(\epsilon_i\pm\epsilon_j)\}_{i<j}$). We have $\Lambda=Q=
\bigoplus_{i=1}^n\mathbb{Z}\epsilon_i$.
We identify $T\simeq \bigl(\mathbb{C}^*\bigr)^n$, taking $t_i:=t^{\epsilon_i}$
as the coordinates. Note that $q_\theta=q^{\frac{1}{2}}$ and $q_\vartheta=q$.

The $q$-difference operator $D_p$ with respect to 
\[
p(t):=\sum_{i=1}^n(t_i+t_i^{-1})
\] 
was identified with Koornwinder's \cite{Ko} multivariable extension of
the Askey-Wilson second order $q$-difference operator by Noumi \cite{N}. 
It can most conveniently
be expressed in terms of the Askey-Wilson parameters
\[
\{a,b,c,d\}=\{k_\theta k_{2\theta}, -k_\theta k_{2\theta}^{-1},
q^{\frac{1}{2}}k_0k_{2a_0},-q^{\frac{1}{2}}k_0k_{2a_0}^{-1}\}
\]
and the dual Askey-Wilson parameters
\[
\{\widetilde{a},\widetilde{b},\widetilde{c},\widetilde{d}\}=
\{k_\theta k_0,-k_\theta k_0^{-1},q^{\frac{1}{2}}k_{2\theta}k_{2a_0},
-q^{\frac{1}{2}}k_{2\theta}k_{2a_0}^{-1}\}
\]
(which are the Askey-Wilson parameters associated to the 
multiplicity function $k^d$ dual to $k$) as
\[
D_p=\widetilde{a}^{-1}k_\vartheta^{2(1-n)}\bigl(\mathcal{D}+
\sum_{i=1}^n\bigl(\widetilde{a}^2k_\vartheta^{2(2n-i-1)}+k_\vartheta^{2(i-1)}\bigr)
\bigr)
\]
with
\begin{equation*}
\begin{split}
\mathcal{D}&=\sum_{i=1}^n\bigl(A_i(t)(\tau(-\epsilon_i)_q-1)+
A_i(t^{-1})(\tau(\epsilon_i)_q-1)\bigr),\\
A_i(t)&=\frac{(1-at_i)(1-bt_i)(1-ct_i)(1-dt_i)}{(1-t_i^2)(1-qt_i^2)}
\prod_{j\not=i}\frac{(1-k_\vartheta^2t_it_j)(1-k_\vartheta^2t_it_j^{-1})}
{(1-t_it_j)(1-t_it_j^{-1})}.
\end{split}
\end{equation*}
If follows that $P_\lambda^+$ ($\lambda\in\Lambda^-$) are the monic 
symmetric
Koornwinder \cite{Ko} polynomials 
and $E(\gamma_\lambda;\cdot)$ ($\lambda\in\Lambda^-$) are Sahi's \cite{Sa}
normalized symmetric Koornwinder polynomials. 

We now make the 
(monic version of) the $c$-function expansion of the associated
basic hypergeometric function $\mathcal{E}_+$ more explicit (see Corollary
\ref{monicc}). First note that $\rho_s^\vee=\sum_{i=1}^n\epsilon_i$
and that
\begin{equation*}
\begin{split}
\gamma_0^{-1}&=\bigl(\widetilde{a}k_\vartheta^{2(n-1)},\cdots,
\widetilde{a}k_\vartheta^2,\widetilde{a}),\\
\gamma_{0,d}^{-1}&=\bigl(ak_\vartheta^{2(n-1)},\ldots,ak_\vartheta^2,a).
\end{split}
\end{equation*}
In the present nonreduced setup the higher rank theta function
$\vartheta(t)$ \eqref{vartheta} can be written in terms
of Jacobi's theta function,
\begin{equation}\label{thetaspecial}
\vartheta(t)=\prod_{i=1}^n\theta\bigl(-q^{\frac{1}{2}}t_i;q\bigr).
\end{equation}
This allows us to rewrite the theta function
factor of $\widehat{c}_\eta(\gamma)$ 
(see Corollary \ref{monicc}) as
\[
\frac{\vartheta((w_0\eta)^{-1}(k_{2a_0}k_{2\theta}^{-1})^{\rho_s^\vee}\gamma)}
{\vartheta((k_{2a_0}k_{2\theta}^{-1})^{\rho_s^\vee}\gamma)}=
\prod_{i=1}^n\frac{\theta(q\eta_i\gamma_i/\widetilde{d})}
{\theta(q\gamma_i/\widetilde{d})}.
\]
The normalized $c$-function $\widehat{c}_\eta(\gamma)$ thus becomes
\begin{equation}\label{cnr}
\widehat{c}_\eta(\gamma)=\prod_{i=1}^n\frac{\bigl(\widetilde{a}\gamma_i^{-1},
\widetilde{b}\gamma_i^{-1},\widetilde{c}\gamma_i^{-1},
\widetilde{d}\gamma_i^{-1}/\eta_i, q\eta_i\gamma_i/\widetilde{d};q\bigr)_{\infty}}
{\bigl(\gamma_i^{-2},q\gamma_i/\widetilde{d};q\bigr)_{\infty}}
\prod_{1\leq i<j\leq n}
\frac{\bigl(k_\vartheta^2\gamma_i^{-1}\gamma_j,
k_\vartheta^2\gamma_i^{-1}\gamma_j^{-1};q\bigr)_{\infty}}
{\bigl(\gamma_i^{-1}\gamma_j,\gamma_i^{-1}\gamma_j^{-1};q\bigr)_{\infty}},
\end{equation}
where we use the shorthand notation
\[
\bigl(\alpha_1,\ldots,\alpha_j;q\bigr)_r:=
\prod_{i=1}^j\bigl(\alpha_i;q\bigr)_r.
\]

In the remainder of this subsection we set $n=1$. Then
$\mathcal{D}$ is the Askey-Wilson \cite{AW} second-order $q$-difference 
operator. In this case the $c$-function expansion
(Corollary \ref{monicc}) 
was proved in \cite{KS,Stexp} using the theory of 
one-variable basic hypergeometric
series. Important ingredients are the 
explicit basic hypergeometric 
series expressions for $\mathcal{E}_+$ and $\widehat{\Phi}_\eta$,
which we now recall. 

The
${}_{r+1}\phi_r$ basic hypergeometric series \cite{GR} is the
convergent series
\begin{equation}\label{rphi}
{}_{r+1}\phi_r\left(\begin{matrix}a_1,a_2,\ldots,a_{r+1}\\
b_1,b_2,\ldots,b_r\end{matrix};q,z\right):=
\sum_{j=0}^{\infty}\frac{\bigl(a_1,a_2,\ldots,a_{r+1};q\bigr)_j}
{\bigl(q,b_1,\ldots,b_r;q\bigr)_{j}}z^j,\qquad |z|<1.
\end{equation}
The very-well-poised ${}_8\phi_7$ basic
hypergeometric series is given by
\[
{}_8W_7(\alpha_0;\alpha_1,\alpha_2,\alpha_3,\alpha_4,\alpha_5;q,z):=
\sum_{r=0}^{\infty}
\frac{1-\alpha_0q^{2r}}{1-\alpha_0}z^r\prod_{j=0}^5\frac{\bigl(\alpha_j;q\bigr)_r}
{\bigl(q\alpha_0/\alpha_j;q\bigr)_r},
\qquad |z|<1.
\] 
Very-well-poised ${}_8\phi_7$ basic 
hypergeometric series solutions of
the eigenvalue equation
\begin{equation}\label{eigvAW}
\mathcal{D}f=(\widetilde{a}(\gamma+\gamma^{-1})-\widetilde{a}^2-1)f
\end{equation}
were obtained in \cite{IR}.
On the other hand we already observed that
$E_+(\gamma_{-r};\cdot)$ ($r\in\mathbb{Z}_{\geq 0}$)
is the inversion invariant, Laurent polynomial 
solution of \eqref{eigvAW} with spectral point 
$\gamma=\gamma_{-r}$, and that both $\mathcal{E}_+(\cdot,\gamma)$ and 
$\widehat{\Phi}_\eta(\cdot,\gamma)$ satisfy \eqref{eigvAW}.
These solutions are related as follows. 
\begin{prop} For the nonreduced case with $n=1$, we have
\begin{equation}\label{AWpol}
E_+(\gamma_{-r};t)={}_4\phi_3\left(\begin{matrix} q^{-r},q^{r-1}abcd,at,a/t\\
ab,ac,ad\end{matrix};q,q\right),\qquad r\in\mathbb{Z}_{\geq 0},
\end{equation}
\begin{equation}\label{AWfun}
\mathcal{E}_+(t,\gamma)=\frac{\Bigl(\frac{qat\gamma}{\widetilde{d}},
\frac{qa\gamma}{\widetilde{d}t},\frac{qa}{d},\frac{q}{ad};q\Bigr)_{\infty}}
{\Bigl(\widetilde{a}\widetilde{b}\widetilde{c}\gamma,
\frac{q\gamma}{\widetilde{d}},\frac{qt}{d},\frac{q}{dt};q\Bigr)_{\infty}}
{}_8W_7\Bigl(\frac{\widetilde{a}\widetilde{b}\widetilde{c}\gamma}{q};
at,\frac{a}{t},\widetilde{a}\gamma,\widetilde{b}\gamma,\widetilde{c}\gamma;
q,\frac{q}{\widetilde{d}\gamma}\Bigr)
\end{equation}
for $|q/\widetilde{d}\gamma|<1$, and
\begin{equation}\label{AWHC}
\widehat{\Phi}_\eta(t,\gamma)=
\frac{\theta\bigl(\frac{ad}{\eta};q\bigr)
\theta\bigl(\frac{q\widetilde{a}t\gamma}{d};q\bigr)}
{\theta\bigl(\frac{\widetilde{d}}{\eta\gamma};q\bigr)
\theta\bigl(\frac{qt}{d};q\bigr)}
\frac{\Bigl(\frac{qa\gamma}{\widetilde{a}t},
\frac{qb\gamma}{\widetilde{a}t},
\frac{qc\gamma}{\widetilde{a}t},
\frac{q\widetilde{a}\gamma}{dt},
\frac{d}{t};q\bigr)_{\infty}}
{\Bigl(\frac{q}{at},
\frac{q}{bt},
\frac{q}{ct},
\frac{q}{dt},
\frac{q^2\gamma^2}{dt};q\Bigr)_{\infty}}
{}_8W_7\Bigl(\frac{q\gamma^2}{dt};
\frac{q\gamma}{\widetilde{a}},
\frac{q\gamma}{\widetilde{d}},
\widetilde{b}\gamma,
\widetilde{c}\gamma,
\frac{q}{dt};
q,\frac{d}{t}\Bigr)
\end{equation}
for $|d/t|<1$.
\end{prop}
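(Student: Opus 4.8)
The plan is to identify each of the three right-hand sides with a classical basic hypergeometric object and to pin it down by the characterization of the corresponding solution of the Askey--Wilson eigenvalue equation \eqref{eigvAW} that was established earlier. All three functions $E_+(\gamma_{-r};\cdot)$, $\mathcal{E}_+(\cdot,\gamma)$ and $\widehat{\Phi}_\eta(\cdot,\gamma)$ solve \eqref{eigvAW}, and each is singled out within its solution class by a combination of symmetry, normalization and asymptotic data. I would therefore reduce the proof, for each explicit series on the right-hand side, to checking that (a) it solves \eqref{eigvAW}, and (b) it has the characterizing property of the solution it is claimed to equal. For (a) I would invoke the standard second-order $q$-difference equations and contiguous relations for terminating ${}_4\phi_3$ series and for very-well-poised ${}_8\phi_7$ series (see \cite[Ch. 7]{GR} and \cite{IR}); these are precisely the computations underlying the rank one treatment in \cite{KS, Stexp}, to which I refer for the detailed verifications.

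For \eqref{AWpol} the top parameter $q^{-r}$ makes the ${}_4\phi_3$ terminate, so it is a Laurent polynomial in $t$ through the pair $at, a/t$, and it is manifestly invariant under $t\mapsto t^{-1}$, which merely swaps $at$ and $a/t$. Its eigenvalue equation is the classical second-order $q$-difference equation for the Askey--Wilson polynomials. Evaluating at $t=\gamma_{0,d}$, i.e. at $at=1$, all terms with $j\geq 1$ vanish because $(1;q)_j=0$, so the value is $1=E_+(\gamma_{-r};\gamma_{0,d})$. By the uniqueness of the inversion invariant Laurent polynomial solution of \eqref{eigvAW} with spectral point $\gamma_{-r}$ and this normalization, the two sides coincide.

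For \eqref{AWfun} and \eqref{AWHC} the very-well-poised ${}_8W_7$ series furnish the two standard asymptotically free solutions of \eqref{eigvAW}, and I would verify that each satisfies \eqref{eigvAW} using the ${}_8\phi_7$ contiguous relations of \cite{IR}. The Askey--Wilson function \eqref{AWfun} is then matched to $\mathcal{E}_+$ by checking that its right-hand side is holomorphic up to the Gaussian factor, is inversion symmetric (Theorem \ref{QSP}{\bf (ii)}), and reduces to the Askey--Wilson polynomial \eqref{AWpol} at the spectral points $\gamma=\gamma_{-r}$ (Theorem \ref{QSP}{\bf (iv)}); these properties characterize $\mathcal{E}_+$. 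For \eqref{AWHC} I would analyse the regime $|d/t|<1$, $|d/t|\to 0$: the ${}_8W_7$ then tends to its $r=0$ term, which equals $1$, so the right-hand side is asymptotically the product of the theta quotient and the $q$-shifted factorial prefactor. Using \eqref{thetaspecial} I would rewrite this leading term as $\widehat{\mathcal{W}}_\eta(t,\gamma)$ and check that it has leading coefficient $\Gamma_0(\gamma)=1$ and the correct $t$-functional equations; the uniqueness of the monic basic Harish-Chandra series with this asymptotic normalization then forces equality.

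The main obstacle is step (a) for the ${}_8W_7$ series: showing that the explicit very-well-poised ${}_8\phi_7$, together with its theta and $q$-shifted factorial prefactors, solves the Askey--Wilson eigenvalue equation with the correct eigenvalue $\widetilde a(\gamma+\gamma^{-1})-\widetilde a^2-1$, and matching the prefactors exactly---in particular the theta quotient in \eqref{AWHC}. This rests on a careful application of the contiguous and three-term relations for very-well-poised ${}_8\phi_7$ series from \cite{IR, GR}, together with bookkeeping of the Askey--Wilson and dual Askey--Wilson parameters, and it is precisely the content carried out in detail in \cite{KS, Stexp}.
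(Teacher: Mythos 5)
The paper's own proof of this proposition is a pure citation: \eqref{AWpol} and \eqref{AWfun} are quoted from \cite[Thm. 4.2]{Stexp}, and \eqref{AWHC} from \cite[\S 4]{KS}. Your proposal replaces this by characterization arguments, which is a genuinely different (and more self-contained) route, and for two of the three formulas it is sound. For \eqref{AWpol}: the terminating ${}_4\phi_3$ is a symmetric Laurent polynomial solving \eqref{eigvAW} at $\gamma=\gamma_{-r}$, and its value at $t=\gamma_{0,d}$ is $1$ because $a\gamma_{0,d}=1$ kills all terms $j\geq 1$; for generic parameters the eigenvalues $(q^{-r}-1)(1-q^{r-1}abcd)$ are distinct, so this pins it down. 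For \eqref{AWHC}: your identification of the theta quotient with $\widehat{\mathcal{W}}_\eta(t,\gamma)$ (it uses $\theta(x;q)=\theta(q/x;q)$ and the parameter identity $\widetilde{a}\widetilde{d}=ad$), combined with uniqueness of the solution of \eqref{eigvAW} admitting an asymptotically free expansion $\widehat{\mathcal{W}}_\eta(t,\gamma)\sum_{\mu\geq 0}\Gamma_\mu(\gamma)t^{-\mu}$, $\Gamma_0=1$, for generic $\gamma$, is exactly the type of argument the paper itself uses for \eqref{GLHC} and in Proposition \ref{propr}; deferring the verification that the ${}_8W_7$ side solves \eqref{eigvAW} to \cite{IR,KS} is no worse than what the paper does.

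The genuine gap is in your treatment of \eqref{AWfun}. The properties you list for the right-hand side --- solving \eqref{eigvAW}, holomorphy after division by the Gaussian factors, inversion symmetry, and reduction to \eqref{AWpol} at the spectral points $\gamma=\gamma_{-r}$ --- do \emph{not} characterize $\mathcal{E}_+$. Indeed, set $c(\gamma):=\bigl(\widetilde{a}\gamma;q\bigr)_{\infty}\bigl(\widetilde{a}/\gamma;q\bigr)_{\infty}$. This function is holomorphic on $\mathbb{C}^*$, satisfies $c(\gamma)=c(\gamma^{-1})$, and vanishes at every point of $W_0\{\gamma_{-r}\}_{r\geq 0}$, since $\widetilde{a}\gamma_{-r}=q^{-r}$ and $\bigl(q^{-r};q\bigr)_{\infty}=0$. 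Consequently $(1+c(\gamma))\,\mathcal{E}_+(t,\gamma)$ satisfies every property on your list yet differs from $\mathcal{E}_+$. This is the standard failure of Carlson-type rigidity in the $q$-world: a holomorphic function may vanish on an entire geometric progression, so interpolation at the spectral points plus holomorphy is not enough. To close the gap you need an extra rigidity input. The option closest to the paper is to deduce \eqref{AWfun} from \eqref{AWHC}: the $c$-function expansion \eqref{cfunctionexpansionagain}, which is proved earlier and independently, expresses $\mathcal{E}_+(t,\gamma)$ as the explicit combination $\widehat{c}_\eta(\gamma_0)^{-1}\bigl(\widehat{c}_\eta(\gamma)\widehat{\Phi}_\eta(t,\gamma)+\widehat{c}_\eta(\gamma^{-1})\widehat{\Phi}_\eta(t,\gamma^{-1})\bigr)$ of two ${}_8W_7$ series, and Bailey's three-term relation \cite[(III.37)]{GR} converts this combination into the single ${}_8W_7$ of \eqref{AWfun}. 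Alternatively, replace the interpolation conditions by a matching of asymptotic expansions (in $\gamma$, deep in the asymptotic sector), where uniqueness does hold for generic parameters; a uniqueness claim based on values at $\{\gamma_{-r}\}$ alone cannot work.
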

\begin{proof}
Formula \eqref{AWpol} follows from 
\cite[Thm. 4.2]{Stexp}, 
\eqref{AWfun} from \cite[Thm. 4.2]{Stexp} and
\eqref{AWHC} from \cite[\S 4]{KS}.
\end{proof}
The proposition shows that $E_+(\gamma_{-r};\cdot)$ 
is the normalized symmetric Askey-Wilson \cite{AW} polynomials 
of degree $r\in\mathbb{Z}_{\geq 0}$, 
and that $\mathcal{E}_+$ coincides up to a
constant multiple with the Askey-Wilson function from \cite{KS}.

In the present nonreduced, rank one setting the $c$-function expansion
\begin{equation}\label{cfunctionexpansionagain}
\mathcal{E}_+(t,\gamma)=\widehat{c}_\eta(\gamma_0)^{-1}
\bigl(\widehat{c}_\eta(\gamma)\widehat{\Phi}_\eta(t,\gamma)+
\widehat{c}_\eta(\gamma^{-1})\widehat{\Phi}_\eta(t,\gamma^{-1})\bigr)
\end{equation}
with
\[
\widehat{c}_\eta(\gamma)=
\frac{\bigl(\widetilde{a}\gamma^{-1},\widetilde{b}\gamma^{-1},
\widetilde{c}\gamma^{-1},\widetilde{d}\gamma^{-1}/\eta,
q\eta\gamma/\widetilde{d};q\bigr)_{\infty}}{\bigl(\gamma^{-2},
q\gamma/\widetilde{d};q\bigr)_{\infty}}
\]
is a special case of Bailey's three term recurrence
relation for very-well-poised ${}_8\phi_7$-series (see \cite[(III.37)]{GR}).
This follows by repeating the proof of \cite[Prop. 1]{KS} (formula 
\eqref{cfunctionexpansionagain}
is more general since it does not involve restriction to a $q$-interval).
See also \cite{StQ} for a detailed discussion.
\begin{rema}
The explicit expressions of 
$\mathcal{E}_+$ and $\widehat{\Phi}_\eta$ as meromorphic functions
on $\mathbb{C}^*\times\mathbb{C}^*$ can be obtained from the
above explicit expressions by writing the ${}_8W_7$ series as 
sum of two balanced 
${}_4\phi_3$ series using Bailey's formula \cite[(III.36)]{GR},
see for instance formula \cite[(3.3)]{KS} for $\mathcal{E}_+$
(the basic ${}_{r+1}\phi_r$ series \eqref{rphi} 
is called balanced if $z=q$ and $qa_1a_2\cdots a_{r+1}=
b_1b_2\cdots b_r$).
\end{rema}

\subsection{The $\textup{GL}_m$ case}\label{GLsection}
In this subsection we 
use the notations from Example \ref{exampleLambda}{\bf (ii)}.
We identify $T\simeq\bigl(\mathbb{C}^*\bigr)^{m}$, taking $t_i:=t^{\epsilon_i}$
as the coordinates ($1\leq i\leq m$). Note that the multiplicity function
$k$ is constant (its constant value will also be denoted by $k$).
The $q$-difference operators
$D_{e_r}$ associated to the elementary symmetric functions
\[
e_r(t)=\sum_{\stackrel{I\subseteq\{1,\ldots,m\}}{\#I=r}}\prod_{j\in I}t_j,
\qquad 1\leq r\leq m
\] 
are Ruijsenaars' \cite{R} quantum Hamiltonians of 
the relativistic quantum trigonometric Calogero-Moser-Sutherland model,
\begin{equation}
D_{e_r}=\sum_{\stackrel{I\subseteq\{1,\ldots,m\}}{\#I=r}}
\left(\prod_{i\in I, j\not\in J}\frac{k^{-1}t_i-kt_j}{t_i-t_j}\right)
\tau\bigl(\sum_{i\in I}\epsilon_i\bigr)_q,\qquad 1\leq r\leq m.
\end{equation}
The monic version of the $c$-function (Corollary \ref{monicc})
becomes
\begin{equation}\label{cetaGL}
\widehat{c}_\eta(\gamma)=\frac{\vartheta((w_0\eta)^{-1}\gamma)}
{\vartheta(\gamma)}
\prod_{1\leq i<j\leq m}\frac{\bigl(k^2\gamma_j/\gamma_i;q\bigr)_{\infty}}
{\bigl(\gamma_j/\gamma_i;q\bigr)_{\infty}}.
\end{equation}
Also in the present $\textup{GL}_m$ case, the higher rank theta functions 
appearing in \eqref{cetaGL}
can be expressed as product of Jacobi
theta functions by \eqref{thetaspecial}.

Our results for $\textup{GL}_2$
can be matched with the extensive literature
on Heine's $q$-analog of the hypergeometric differential equation
(see, e.g., \cite[Chpt. 1]{GR},
\cite[Chpt. 3, \S 1.7]{Kln} and \cite[\S 6.3]{M}). It leads to 
explicit expressions of $E_+(\gamma_\lambda;\cdot)$,
$\Phi$ and $\widehat{\Phi}_\eta$
in terms of Heine's $q$-analog
of the hypergeometric function.
For completeness we detail this link here.

Heine's basic hypergeometric $q$-difference equation is
\begin{equation}\label{qd}
\begin{split}
z(c-abqz)(\partial_q^2u)(z)+\left(\frac{1-c}{1-q}+
\frac{(1-a)(1-b)-(1-abq)}{1-q}z\right)(\partial_qu)&(z)\\
-\frac{(1-a)(1-b)}{(1-q)^2}&u(z)=0,
\end{split}
\end{equation}
with 
\[(\partial_qu)(z):=\frac{u(z)-u(qz)}{(1-q)z}
\] 
the $q$-derivative.
Note that \eqref{qd} formally reduces to 
the hypergeometric differential equation
\[
z(1-z)u^{\prime\prime}(z)+(c-(a+b+1)z)u^\prime(z)-abu(z)=0
\]
by replacing in \eqref{qd} the parameters
$a,b,c$ by $q^a,q^b,q^c$ and taking the limit
$q\rightarrow 1$. A distinguished solution of \eqref{qd} is Heine's
basic hypergeometric function
\begin{equation}\label{Heine}
\phi_H(z):={}_2\phi_1\left(\begin{matrix} a,b\\ c\end{matrix};q,z\right)
\end{equation}
for $|z|<1$.

Note that for $\textup{GL}_2$,
\[\widehat{\mathcal{W}}(t,\gamma)=
\frac{\theta\bigl(-q^{\frac{1}{2}}kt_1/\gamma_2;q\bigr)
\theta\bigl(-q^{\frac{1}{2}}t_2/k\gamma_1;q\bigr)}
{\theta\bigl(-q^{\frac{1}{2}}t_1;q\bigr)\theta\bigl(-q^{\frac{1}{2}}t_2;q\bigr)}.
\]
\begin{lem}\label{tran}
Fix $\gamma\in T\simeq\bigl(\mathbb{C}^*\bigr)^2$.
If $u\in\mathcal{M}(\mathbb{C}^*)$
satisfies Heine's basic hypergeometric $q$-difference
equation \eqref{qd} with the parameters $a,b,c$ given by
\begin{equation}\label{pppp}
a=k^2,\quad b=k^2\gamma_1/\gamma_2,\quad c=q\gamma_1/\gamma_2,
\end{equation}
then the meromorphic function $f(t):=
\widehat{\mathcal{W}}_\eta(t,\gamma)u(qt_2/k^2t_1)$
satisfies
\begin{equation}\label{spgl2}
D_{e_r}f=e_r(\gamma^{-1})f,\qquad r=1,2,
\end{equation}
where the 
$D_{e_r}$ are the $\textup{GL}_2$ Macdonald-Ruijsenaars $q$-difference
operators. Conversely, if $f$ is a meromorphic solution of \eqref{spgl2}
of the form $f(t_1,t_2)=
\widehat{\mathcal{W}}_\eta(t,\gamma)u(qt_2/k^2t_1)$ for some 
$u\in\mathcal{M}(\mathbb{C}^*)$, then $u$ satisfies \eqref{qd} with parameters
$a,b,c$ given by \eqref{pppp}.
\end{lem}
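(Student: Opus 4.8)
The plan is to reduce the two eigenvalue equations $D_{e_1}f=e_1(\gamma^{-1})f$ and $D_{e_2}f=e_2(\gamma^{-1})f$ to a single scalar $q$-difference equation in the variable $z=qt_2/(k^2t_1)$, and then to recognize that equation as Heine's equation \eqref{qd} with the stated parameters. First I would dispose of $D_{e_2}$, which for $m=2$ is simply the shift $\tau(\epsilon_1+\epsilon_2)_q$, i.e. $(D_{e_2}f)(t)=f(q^{-1}t_1,q^{-1}t_2)$. Since $z$ is invariant under the common scaling $t\mapsto ct$, the factor $u(z)$ is unaffected by this shift, so $D_{e_2}f=e_2(\gamma^{-1})f$ reduces to a functional equation for $\widehat{\mathcal{W}}_\eta$ alone. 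This is precisely the $\lambda=\epsilon_1+\epsilon_2$ instance of $\widehat{\mathcal{W}}_\eta(q^\lambda t,\gamma)=\gamma_0^\lambda\gamma^{w_0\lambda}\widehat{\mathcal{W}}_\eta(t,\gamma)$: here $w_0$ fixes $\epsilon_1+\epsilon_2$, and a direct check gives $\gamma_0^{\epsilon_1+\epsilon_2}=1$ in the $\textup{GL}_2$ case, so that $\widehat{\mathcal{W}}_\eta(q^{-(\epsilon_1+\epsilon_2)}t,\gamma)=(\gamma_1\gamma_2)^{-1}\widehat{\mathcal{W}}_\eta(t,\gamma)=e_2(\gamma^{-1})\widehat{\mathcal{W}}_\eta(t,\gamma)$. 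Thus the $D_{e_2}$-equation holds for \emph{every} $u$ and carries no information; the entire content sits in $D_{e_1}$.

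Next I would substitute $f=\widehat{\mathcal{W}}_\eta\,u(z)$ into $D_{e_1}f=e_1(\gamma^{-1})f$. The shifts $t_1\mapsto q^{-1}t_1$ and $t_2\mapsto q^{-1}t_2$ send $z$ to $qz$ and $q^{-1}z$ respectively, while the $\lambda=-\epsilon_1$ and $\lambda=-\epsilon_2$ functional equations of $\widehat{\mathcal{W}}_\eta$ produce the gauge factors $k\gamma_2^{-1}$ and $k^{-1}\gamma_1^{-1}$ (using $\gamma_0^{-\epsilon_1}=k$, $\gamma_0^{-\epsilon_2}=k^{-1}$ and $w_0\epsilon_1=\epsilon_2$). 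Dividing out $\widehat{\mathcal{W}}_\eta$ and rewriting the rational coefficients of $D_{e_1}$ through the relation $t_2/t_1=k^2z/q$, I expect the equation to collapse to the three-term form
\begin{equation*}
(q-k^4z)\gamma_2^{-1}u(qz)+(q-z)\gamma_1^{-1}u(q^{-1}z)=(\gamma_1^{-1}+\gamma_2^{-1})(q-k^2z)u(z).
\end{equation*}

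Finally I would match this against Heine's equation. Converting \eqref{qd} to three-term form by inserting $(\partial_qu)(z)=\frac{u(z)-u(qz)}{(1-q)z}$ and the corresponding expression for $\partial_q^2u$, clearing denominators, and then applying the shift $z\mapsto q^{-1}z$, one obtains the equivalent recurrence $(c-abz)u(qz)+(q-z)u(q^{-1}z)=(q+c-(a+b)z)u(z)$. Substituting $a=k^2$, $b=k^2\gamma_1/\gamma_2$, $c=q\gamma_1/\gamma_2$ gives $c-abz=(\gamma_1/\gamma_2)(q-k^4z)$ and $q+c-(a+b)z=(1+\gamma_1/\gamma_2)(q-k^2z)$, so this recurrence is exactly the displayed three-term equation divided by $\gamma_1^{-1}$. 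Since the two scalar equations coincide and the $D_{e_2}$-equation is automatic, both implications of the lemma follow at once: a solution $u$ of \eqref{qd} yields a solution $f$ of \eqref{spgl2}, and any $f$ of the prescribed form solving \eqref{spgl2} forces $u$ to satisfy \eqref{qd}. The main obstacle is bookkeeping rather than conceptual: correctly extracting the two gauge factors from the functional equations of $\widehat{\mathcal{W}}_\eta$ (with the right signs and $\gamma_0$-values) and carrying out the reduction of \eqref{qd} to three-term form, where one must verify the cancellations that collapse the coefficient of $u(z)$ to $1-z$ before the final shift and assemble the compact coefficients above.
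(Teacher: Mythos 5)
Your proposal is correct and is essentially the paper's proof: the paper disposes of Lemma \ref{tran} with the words ``Direct computation,'' and your argument is exactly that computation carried out in full — the $D_{e_2}$-equation is automatic from the functional equation of $\widehat{\mathcal{W}}_\eta$ (since $\gamma_0^{\epsilon_1+\epsilon_2}=1$ and $w_0$ fixes $\epsilon_1+\epsilon_2$), and gauging the $D_{e_1}$-equation by $\widehat{\mathcal{W}}_\eta$ yields the three-term recurrence $(q-k^4z)\gamma_2^{-1}u(qz)+(q-z)\gamma_1^{-1}u(q^{-1}z)=(\gamma_1^{-1}+\gamma_2^{-1})(q-k^2z)u(z)$, which coincides with the shifted three-term form of Heine's equation under the substitution \eqref{pppp}. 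All the intermediate identities you state (the gauge factors $k\gamma_2^{-1}$, $k^{-1}\gamma_1^{-1}$, the reduction of \eqref{qd} to $(c-abz)u(qz)+(q-z)u(q^{-1}z)=(q+c-(a+b)z)u(z)$, and the factorizations $c-abz=(\gamma_1/\gamma_2)(q-k^4z)$, $q+c-(a+b)z=(1+\gamma_1/\gamma_2)(q-k^2z)$) check out.
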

\begin{proof}
Direct computation.
\end{proof}
\begin{cor}[$\textup{GL}_2$ case]
{\bf (i)} The normalized symmetric Macdonald polynomial
$E_+(\gamma_\lambda;t)$ ($\lambda=\lambda_1\epsilon_1+\lambda_2\epsilon_2$
with $\lambda_i\in\mathbb{Z}$ and 
$\lambda_1\leq\lambda_2$) is given by
\begin{equation}\label{GLpol}
E_+(\gamma_\lambda;t)=
k^{\lambda_1-\lambda_2}\frac{\bigl(q^{1+\lambda_1-\lambda_2}/k^2;
q\bigr)_{\lambda_2-\lambda_1}}{\bigl(q^{1+\lambda_1-\lambda_2}/k^4;
q\bigr)_{\lambda_2-\lambda_1}}
t_1^{\lambda_2}t_2^{\lambda_1}
{}_2\phi_1\left(\begin{matrix} k^2,q^{\lambda_1-\lambda_2}\\
q^{1+\lambda_1-\lambda_2}/k^2\end{matrix};q,\frac{qt_2}{k^2t_1}\right).
\end{equation}
{\bf (ii)} The monic basic Harish-Chandra series is explicitly given by
\begin{equation}\label{GLHC}
\widehat{\Phi}_\eta(t,\gamma)=\widehat{\mathcal{W}}_\eta(t,\gamma)
{}_2\phi_1\left(\begin{matrix} k^2,k^2\gamma_1/\gamma_2\\
q\gamma_1/\gamma_2\end{matrix};q,\frac{qt_2}{k^2t_1}\right)
\end{equation}
for $|qt_2/k^2t_1|<1$.
\end{cor}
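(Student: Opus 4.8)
The plan is to prove both parts by pushing everything through Lemma \ref{tran}, which converts the $\textup{GL}_2$ spectral problem \eqref{spgl2} into Heine's basic hypergeometric $q$-difference equation \eqref{qd}. Once that dictionary is in place, the entire content of the statement reduces to recognizing the relevant object as the distinguished solution of \eqref{qd}, namely Heine's function \eqref{Heine}. I would treat part (ii) first, since part (i) follows from it by specializing the spectral parameter.

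For part (ii), I would start from the convergent expansion $\widehat{\Phi}_\eta(t,\gamma)=\widehat{\mathcal{W}}_\eta(t,\gamma)\sum_{\mu\in Q_+}\Gamma_\mu(\gamma)t^{-\mu}$ with $\Gamma_0(\gamma)=1$, valid for $t\in B_\epsilon^{-1}$. In the $\textup{GL}_2$ case $Q_+=\mathbb{Z}_{\geq 0}\alpha_1$ with $\alpha_1=\epsilon_1-\epsilon_2$, so each $t^{-\mu}$ is a power of $t_2/t_1=k^2z/q$ where $z:=qt_2/k^2t_1$; hence the sum equals $u(z)$ for a series $u$ with $u(0)=\Gamma_0(\gamma)=1$, holomorphic near $z=0$. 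Since $\widehat{\Phi}_\eta(\cdot,\gamma)$ satisfies the Macdonald equations \eqref{Macdonaldqdiff}, and these are exactly \eqref{spgl2} for $p=e_1,e_2$ with eigenvalues $e_r(\gamma^{-1})$, the converse direction of Lemma \ref{tran} shows that $u$ solves \eqref{qd} with the parameters \eqref{pppp}. Writing \eqref{qd} as a two-term recursion for the Taylor coefficients of $u$, one sees that for generic $\gamma$ (so that $c=q\gamma_1/\gamma_2\notin q^{\mathbb{Z}_{\geq 1}}$) the exponent-zero solution is uniquely determined by $u(0)=1$; since ${}_2\phi_1(k^2,k^2\gamma_1/\gamma_2;q\gamma_1/\gamma_2;q,z)$ is precisely that solution, I conclude $u={}_2\phi_1$, which is \eqref{GLHC}.

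For part (i), I would specialize (ii) at $\gamma=\gamma_\lambda$ with $\lambda=\lambda_1\epsilon_1+\lambda_2\epsilon_2\in\Lambda^-$ (so $\lambda_1\leq\lambda_2$). Using $\gamma_\lambda=q^\lambda\gamma_0$ and $\gamma_0^{\epsilon_1-\epsilon_2}=k^{-2}$ one finds $\gamma_{\lambda,1}/\gamma_{\lambda,2}=q^{\lambda_1-\lambda_2}k^{-2}$, so the parameters \eqref{pppp} become $a=k^2$, $b=q^{\lambda_1-\lambda_2}$, $c=q^{1+\lambda_1-\lambda_2}/k^2$; as $b=q^{-(\lambda_2-\lambda_1)}$ the series in \eqref{GLHC} terminates into a polynomial of degree $\lambda_2-\lambda_1$, as it must. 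By Proposition \ref{HCpol} one has $\widehat{\Phi}_\eta(t,\gamma_\lambda)=(\eta\gamma_{0,d})^{-w_0\lambda}P_\lambda^+(t)$, while in the reduced case $(k_0^{-1}k_{2a_0})^{\rho_s^\vee}=1$ forces $\widehat{\mathcal{W}}(t,\gamma_\lambda)=q^{-|\lambda|^2/2}t^{w_0\lambda}$ and hence $\widehat{\mathcal{W}}_\eta(t,\gamma_\lambda)=t^{w_0\lambda}/(\eta\gamma_{0,d})^{w_0\lambda}$ with $t^{w_0\lambda}=t_1^{\lambda_2}t_2^{\lambda_1}$. Substituting \eqref{GLHC} then yields $P_\lambda^+(t)=t_1^{\lambda_2}t_2^{\lambda_1}\,{}_2\phi_1(k^2,q^{\lambda_1-\lambda_2};q^{1+\lambda_1-\lambda_2}/k^2;q,qt_2/k^2t_1)$, and since $E_+(\gamma_\lambda;\cdot)=P_\lambda^+(\gamma_{0,d})^{-1}P_\lambda^+$, formula \eqref{GLpol} drops out once the rank-one evaluation $P_\lambda^+(\gamma_{0,d})^{-1}=k^{\lambda_1-\lambda_2}(q^{1+\lambda_1-\lambda_2}/k^2;q)_{\lambda_2-\lambda_1}/(q^{1+\lambda_1-\lambda_2}/k^4;q)_{\lambda_2-\lambda_1}$ is inserted.

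The main obstacle lies in the two ingredients that are not direct consequences of Lemma \ref{tran}. First, the uniqueness of the exponent-zero power-series solution of \eqref{qd} has to be made precise: this is a Frobenius-type (indicial) statement for a second-order $q$-difference equation with a regular singular point at the origin, solvable exactly when $c\notin q^{\mathbb{Z}}$, i.e. for generic $\gamma$ (respectively generic $k$ in the terminating specialization of part (i)). I would phrase it as solving the coefficient recursion rather than invoking abstract machinery. Second, pinning down the overall constant in \eqref{GLpol} amounts to the explicit $\textup{GL}_2$ evaluation of $P_\lambda^+(\gamma_{0,d})$; I would import this from Remark \ref{consequenceremark} (see also \cite[\S 3.3.2]{C}) rather than recompute it, so that the only genuinely new calculations are the verification underlying Lemma \ref{tran} and the parameter identification \eqref{pppp} under $\gamma\mapsto\gamma_\lambda$.
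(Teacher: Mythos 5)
Your proposal is correct, but it organizes the proof differently from the paper. The paper proves part (i) first and independently: it feeds the terminating series $\phi_H(z)={}_2\phi_1(k^2,q^{\lambda_1-\lambda_2};q^{1+\lambda_1-\lambda_2}/k^2;q,z)$ into the \emph{forward} direction of Lemma \ref{tran}, so that $t_1^{\lambda_2}t_2^{\lambda_1}\phi_H(qt_2/k^2t_1)$ is a Laurent-polynomial solution of \eqref{spgl2} with the correct leading monomial, hence equals $P_\lambda^+(t)$; the normalization constant in \eqref{GLpol} is then computed by evaluating the terminating ${}_2\phi_1$ at $t=\gamma_{0,d}$ (where the argument becomes $z=q$) via the $q$-Vandermonde summation \cite[(II.6)]{GR}. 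Part (ii) is proved in the paper by the same uniqueness principle you use, but phrased on the Macdonald side: both $\widehat{\mathcal{W}}_\eta(t,\gamma)\phi_H(qt_2/k^2t_1)$ (with parameters \eqref{pppp}) and $\widehat{\Phi}_\eta(t,\gamma)$ solve \eqref{spgl2} and have the same asymptotic expansion in $t_2/t_1$, which forces equality. You instead prove (ii) first, using the \emph{converse} direction of Lemma \ref{tran} together with the Frobenius-type coefficient recursion for \eqref{qd}, and then obtain (i) by specializing $\gamma=\gamma_\lambda$, invoking Proposition \ref{HCpol} and importing the evaluation of $P_\lambda^+(\gamma_{0,d})$ from Remark \ref{consequenceremark}. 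Both routes are sound; yours buys a more economical derivation of (i) (no separate polynomial identification, no new summation formula), at the price of genericity hypotheses ($k$ generic in Proposition \ref{HCpol}, $\gamma$ generic in the recursion argument) that must afterwards be removed by meromorphic continuation in $\gamma$ and rationality in $k$ --- steps you allude to but should state explicitly.

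One small correction: the exceptional set for uniqueness of the power-series solution of \eqref{qd} is misstated. Writing $u(z)=\sum_{j\geq 0}u_jz^j$, the recursion coming from \eqref{qd} is $u_j(1-q^j)(1-cq^{j-1})=u_{j-1}(1-aq^{j-1})(1-bq^{j-1})$ for $j\geq 1$, so the solution with $u_0=1$ exists and is unique precisely when $c\notin q^{\mathbb{Z}_{\leq 0}}=\{1,q^{-1},q^{-2},\ldots\}$, i.e. when $\gamma_1/\gamma_2\notin q^{\mathbb{Z}_{<0}}$ --- not when $c\notin q^{\mathbb{Z}_{\geq 1}}$ as you wrote. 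Since you only need the statement for generic $\gamma$, and both sides of \eqref{GLHC} are meromorphic in $\gamma$, this slip does not affect the validity of the argument.
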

\begin{proof}
{\bf (i)} 
Consider the solution
\[
\phi_H(z)={}_2\phi_1\left(\begin{matrix} k^2, q^{\lambda_1-\lambda_2}\\
q^{1+\lambda_1-\lambda_2}/k^2\end{matrix};q,z\right)
\] 
of the basic hypergeometric $q$-difference equation \eqref{qd} 
with parameters $a,b,c$ given by \eqref{pppp} 
and with $\gamma=\gamma_\lambda=(q^{\lambda_1}k^{-1},q^{\lambda_2}k)$.
It is a polynomial in $z$ of 
degree $\lambda_2-\lambda_1$ (it is essentially the 
continuous $q$-ultraspherical polynomial). In addition,
$\widehat{\mathcal{W}}(t,\gamma_\lambda)=
q^{-\frac{|\lambda|^2}{2}}t_1^{\lambda_2}t_2^{\lambda_1}$.
By the previous lemma we conclude that 
$P_\lambda^+(t)=t_1^{\lambda_2}t_2^{\lambda_1}\phi_H(qt_2/k^2t_1)$,
see also
\cite[\S 6.3]{M} and \cite[Chpt. 2]{C}. The normalization factor turning
$P_\lambda^+(t)$ into the normalized symmetric Macdonald polynomial
$E_+(\gamma_\lambda;t)$ can for instance be computed using
the $q$-Vandermonde formula \cite[(II.6)]{GR}.\\
{\bf (ii)} 
$\widehat{\mathcal{W}}_\eta(t,\gamma)\phi_H(qt_2/k^2t_1)$
with the parameters $a,b,c$ in $\phi_H$ given by \eqref{pppp}
and $\widehat{\Phi}_\eta(t,\gamma)$ both satisfy \eqref{spgl2} and have 
the same asymptotic expansion for small $|t_2/t_1|$. This forces them
to be equal (cf., e.g., \cite[Chpt. 3, \S 1.7]{Kln} and \cite[Thm. 2.3]{CO}).
\end{proof}

The monic $\textup{GL}_2$ $c$-function expansion 
\begin{equation}\label{monicgl2}
\mathcal{E}_+(t,\gamma)=\widehat{c}_\eta(\gamma_0)^{-1}
\bigl(\widehat{c}_\eta(\gamma)\widehat{\Phi}_\eta(t,\gamma)+
\widehat{c}_\eta(w_0\gamma)\widehat{\Phi}_\eta(t,w_0\gamma)\bigr),
\end{equation}
where $w_0\gamma=(\gamma_2,\gamma_1)$ and
\[
\widehat{c}_\eta(\gamma)=\frac{\theta(-q^{\frac{1}{2}}\gamma_1/\eta_2;q)
\theta(-q^{\frac{1}{2}}\gamma_2/\eta_1;q)}
{\theta(-q^{\frac{1}{2}}\gamma_1;q)\theta(-q^{\frac{1}{2}}\gamma_2;q)}
\frac{\bigl(k^2\gamma_2/\gamma_1;q\bigr)_{\infty}}
{\bigl(\gamma_2/\gamma_1;q\bigr)_{\infty}},
\]
thus yields an explicit expression of the
$\textup{GL}_2$ basic hypergeometric function $\mathcal{E}_+(t,\gamma)$
as sum of two ${}_2\phi_1$ basic hypergeometric series.

Another solution of Heine's basic hypergeometric $q$-difference equation
\eqref{qd} is
\begin{equation*}
\begin{split}
v(x):=&\frac{\theta(ax;q)}{\theta(x;q)}{}_2\phi_1\left(
\begin{matrix} a,qa/c\\ qa/b\end{matrix}; 
q,\frac{qc}{abx}\right)\\
=&\frac{\theta(ax;q)\bigl(a,q^2/bx;q\bigr)_{\infty}}
{\theta(x;q)\bigl(qa/b,qc/abx;q\bigr)_{\infty}}
{}_2\phi_1\left(\begin{matrix}
q/b, qc/abx\\
q^2/bx\end{matrix};q,a\right),
\end{split}
\end{equation*}
see, e.g., \cite[Chpt. 3, \S 1.7]{Kln}
(the second formula follows from Heine's transformation formula 
\cite[(III.1)]{GR}). 
By Lemma \ref{tran} it yields
yet another solution 
\begin{equation*}
f(t)=\widehat{\mathcal{W}}_\eta(t,\gamma)
\frac{\theta(qt_2/t_1;q)}{\theta(qt_2/k^2t_1;q)}
\frac{\bigl(qt_1\gamma_2/t_2\gamma_1;q\bigr)_{\infty}}
{\bigl(qt_1/k^2t_2;q\bigr)_{\infty}}
{}_2\phi_1\left(
\begin{matrix} q\gamma_2/k^2\gamma_1, qt_1/k^2t_2\\
qt_1\gamma_2/t_2\gamma_1\end{matrix}; q,k^2\right)
\end{equation*}
of the system \eqref{spgl2} of $\textup{GL}_2$
Macdonald $q$-difference equations. The various
${}_2\phi_1$ basic hypergeometric
series solutions of \eqref{spgl2} are related by explicit
connection coefficient formulas, see, e.g.,
\cite{StQ}.

We finish this subsection by relating the monic $\textup{GL}_2$ basic
Harish-Chandra series $\widehat{\Phi}_\eta$ \eqref{GLHC}
to the monic nonreduced rank one basic Harish-Chandra series \eqref{AWHC}, 
which we will denote here by $\widehat{\Phi}_\xi^{nr}$. Recall that
in the nonreduced rank one setting
the associated multiplicity function is determined by
the four values $k^{nr}=(k^{nr}_\theta,k^{nr}_{2\theta},
k^{nr}_0,k^{nr}_{2a_0})$. We write $\theta(x_1,\ldots,x_r;q)=
\prod_{j=1}^r\theta(x_j;q)$ for products of Jacobi theta functions.
\begin{prop}\label{propr}
Let $\eta=(\eta_1,\eta_2)\in\bigl(\mathbb{C}^*\bigr)^2$ and 
$\xi\in\mathbb{C}^*$. Let $k^{nr}$ be the multiplicity function
$k^{nr}=(k,k,0,0)$ with $0<k<1$. For $\gamma=(\gamma_1,
\gamma_2)\in\bigl(\mathbb{C}^*\bigr)^2$ set $\gamma^{\pm 2}:=
(\gamma_1^{\pm 2},\gamma_2^{\pm 2})$.
Then 
\begin{equation}\label{relPhi}
\widehat{\Phi}_\eta(t,\gamma^2;k,q)=
C_{\eta,\xi}(t,\gamma)
\widehat{\Phi}_\xi^{nr}\Bigl(\frac{t_1}{t_2},\frac{\gamma_1}{\gamma_2};
k^{nr},q\Bigr)
\end{equation}
with $\widehat{\Phi}_\eta$ the
$\textup{GL}_2$ monic basic Harish-Chandra series \eqref{GLHC} and 
\[
C_{\eta,\xi}(t,\gamma)=
\frac{\theta(-q^{\frac{1}{2}}\eta_1/k,-q^{\frac{1}{2}}\eta_2k,
-q^{\frac{1}{2}}k\gamma_2/\xi\gamma_1,-q^{\frac{1}{2}}kt_1/\gamma_2^2,
-q^{\frac{1}{2}}t_2/k\gamma_1^2,-q^{\frac{1}{2}}t_1/t_2;q)}
{\theta(-q^{\frac{1}{2}}\eta_1/\gamma_2^2,
-q^{\frac{1}{2}}\eta_2/\gamma_1^2,-q^{\frac{1}{2}}k^2/\xi,
-q^{\frac{1}{2}}t_1,-q^{\frac{1}{2}}t_2,
-q^{\frac{1}{2}}kt_1\gamma_1/t_2\gamma_2;q)}.
\]
\end{prop}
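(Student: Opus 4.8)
The plan is to prove \eqref{relPhi} by the uniqueness characterization of the GL$_2$ basic Harish-Chandra series rather than by a direct confluent reduction of the explicit very-well-poised ${}_8W_7$-series \eqref{AWHC}. The reason is that at the degenerate multiplicity $k^{nr}=(k,k,0,0)$ the Askey-Wilson parameters become $a=k^2$, $b=-1$, $c=d=0$ while the dual parameters $\widetilde a,\widetilde c$ tend to $0$ and $\widetilde b,\widetilde d$ blow up, so many of the $q$-shifted factorials and theta factors in \eqref{AWHC} become singular and the series argument $d/t\to 0$ collapses against $\alpha_0=q\gamma^2/dt\to\infty$. The underlying object is nevertheless regular: for these parameters the Askey-Wilson operator $\mathcal{D}$ of Subsection \ref{specialnr} specializes to the well-defined second order $q$-difference operator in the variable $s=t_1/t_2$ with coefficient $A_1(s)=(1-k^2s)/\bigl((1-s)(1-qs^2)\bigr)$, and it is this operator, not the singular series, that I would work with.

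First I would fix a generic $\gamma$ and regard both sides of \eqref{relPhi} as meromorphic functions of $t\in(\mathbb{C}^*)^2$. The left hand side satisfies the GL$_2$ Macdonald $q$-difference equations \eqref{spgl2} at spectral value $\gamma^2$ by the defining property \eqref{Macdonaldqdiff} of the monic basic Harish-Chandra series. For the right hand side I would invoke the converse part of Lemma \ref{tran}: it suffices to check that $C_{\eta,\xi}(t,\gamma)\,\widehat{\Phi}_\xi^{nr}(t_1/t_2,\gamma_1/\gamma_2;k^{nr},q)$ has the form $\widehat{\mathcal{W}}_\eta(t,\gamma^2)\,u(qt_2/k^2t_1)$ with $u$ a solution of Heine's equation \eqref{qd} for the parameters $a=k^2$, $b=k^2\gamma_1^2/\gamma_2^2$, $c=q\gamma_1^2/\gamma_2^2$ (these are \eqref{pppp} with $\gamma$ replaced by $\gamma^2$). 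Concretely, $\widehat{\Phi}_\xi^{nr}$ solves the specialized Askey-Wilson equation in $s$; under $s=t_1/t_2=q/(k^2z)$ this one-variable operator is intertwined with the gauged Heine operator of Lemma \ref{tran}, and the theta quotient $C_{\eta,\xi}$ is built precisely so that multiplication by it carries the Askey-Wilson gauge into the GL$_2$ prefactor $\widehat{\mathcal{W}}_\eta(\cdot,\gamma^2)$. The second GL$_2$ equation $D_{e_2}f=e_2(\gamma^{-2})f$, being the central shift in the $t_1t_2$-direction, is then automatic from the functional equations satisfied by $C_{\eta,\xi}(t,\gamma)$ in $t$ together with \eqref{good}.

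Second I would compare the leading asymptotics as $t_2/t_1\to 0$. Both sides are of the form $\widehat{\mathcal{W}}_\eta(t,\gamma^2)\bigl(1+O(t_2/t_1)\bigr)$: the left hand side by \eqref{GLHC} with $\Gamma_0\equiv 1$, and the right hand side because the power series obtained from $\widehat{\Phi}_\xi^{nr}$ has constant term $1$ while $C_{\eta,\xi}$ contributes exactly $\widehat{\mathcal{W}}_\eta(t,\gamma^2)$ in the limit. This is the only place where the precise numerical theta constants in $C_{\eta,\xi}$ and the base-point normalization $\xi$ of $\widehat{\Phi}_\xi^{nr}$ are needed; matching the two leading coefficients reduces to the quasi-periodicity $\theta(q^rx;q)=(-x)^{-r}q^{-r(r-1)/2}\theta(x;q)$ of the Jacobi theta function. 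Having matched both the $q$-difference system and the normalized leading term, the uniqueness of such solutions (the same characterization used in the proof of Proposition \ref{HCpol} and in the $\textup{GL}_2$ corollary following Lemma \ref{tran}) forces the two sides to coincide.

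The main obstacle is the bookkeeping of the theta prefactor $C_{\eta,\xi}$ in the presence of the degeneracy at $k^{nr}=(k,k,0,0)$: one must verify that the apparent singularities of \eqref{AWHC} in $d$ and $\widetilde a$ are exactly cancelled by compensating zeros and poles of $C_{\eta,\xi}$, so that the right hand side is genuinely a meromorphic solution of \eqref{spgl2} with the asserted asymptotics. I expect this to be a careful but routine computation with Jacobi theta functions and $q$-shifted factorials, rewriting every $d$- and $\widetilde a$-dependent factor by quasi-periodicity before passing to the limit. An equivalent but messier alternative is to feed the specialized parameters directly into a ${}_8W_7\to{}_2\phi_1$ reduction from \cite{GR} and match the resulting ${}_2\phi_1$ against \eqref{GLHC}; I would avoid this route because the required confluent limit obscures the cancellation of the singular factors.
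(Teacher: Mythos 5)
Your overall strategy is the one the paper uses: show that both sides of \eqref{relPhi} solve the $\textup{GL}_2$ Macdonald system \eqref{spgl2} at spectral value $\gamma^2$ (for the right hand side by transferring a rank-one $q$-difference equation through the prefactor $C_{\eta,\xi}$), match the normalized expansions $\widehat{\mathcal{W}}_\eta(t,\gamma^2)\bigl(1+O(t_2/t_1)\bigr)$, and conclude by the same uniqueness argument as in the proof of \eqref{GLHC}. The paper does exactly this, citing \cite{StQ} for the transfer and using only the quasi-periodicity $C_{\eta,\xi}(\tau(-\epsilon_r)_qt,\gamma)=(\gamma_1\gamma_2)^{-1}C_{\eta,\xi}(t,\gamma)$, $r=1,2$.

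However, there is a genuine gap in your execution of the transfer step: your identification of the specialized rank-one operator is wrong, and with it the intertwining you defer as a ``routine computation'' fails. The only property of $C_{\eta,\xi}$ available for the transfer is the quasi-periodicity above (a direct check with $\theta(q^{\pm 1}y;q)=(-y)^{\mp 1}q^{(1\mp1)/2-1}\theta(y;q)$ shows the $t$-dependent theta factors produce exactly the multiplier $(\gamma_1\gamma_2)^{-1}$ under each elementary shift, with no residual $s$-dependence). Conjugating $D_{e_1}$ by $C_{\eta,\xi}$ and restricting to functions of $s=t_1/t_2$ therefore yields precisely
\[
\frac{k^{-1}s-k}{s-1}\,f(q^{-1}s)+\frac{k^{-1}-ks}{1-s}\,f(qs)
=\Bigl(\frac{\gamma_1}{\gamma_2}+\frac{\gamma_2}{\gamma_1}\Bigr)f(s),
\]
the $\textup{A}_1$ Macdonald ($q$-ultraspherical) eigenvalue equation. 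This coincides with the Koornwinder/Askey-Wilson eigenvalue equation exactly when the Askey-Wilson parameters are $\{a,b,c,d\}=\{k^2,-1,q^{1/2},-q^{1/2}\}$, i.e.\ for \emph{trivial} affine parameters $k_0^{nr}=k_{2a_0}^{nr}=1$ (then $(1-cs)(1-ds)$ cancels $1-qs^2$ and $A_1(s)+A_1(1/s)\equiv 1+k^2$ is constant). It is \emph{not} equivalent to any eigenvalue equation for your operator $A_1(s)=(1-k^2s)/\bigl((1-s)(1-qs^2)\bigr)$ (the reading $c=d=0$), nor for the diagonal-limit reading $d=-q^{1/2}$: in both cases $A_1(s)+A_1(1/s)$ is nonconstant and the ratio $A_1(s)/A_1(1/s)$ differs, so the two second-order equations are not proportional for any eigenvalue, and a common solution would be forced to satisfy a nontrivial lower-order relation. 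The same misreading undermines your asymptotic step: with $k_0^{nr}=k_{2a_0}^{nr}=0$ taken literally, $\widehat{\Phi}_\xi^{nr}$ does not exist as a limit ($\gamma_0^{-1}=\widetilde{a}\to 0$ makes the prefactor $\widehat{\mathcal{W}}$ meaningless, the eigenvalue $\widetilde{a}(z+z^{-1})-\widetilde{a}^2-1$ degenerates to $-1$ and loses the spectral variable, and the normalized prefactor has no limit), so there is no object to which your uniqueness argument could apply. The entries $0$ in $k^{nr}$ must be read as trivial multiplicity, multiplicatively $k_0^{nr}=k_{2a_0}^{nr}=1$; with that reading nothing is degenerate, no confluent limit is needed, one has the exact prefactor identity $C_{\eta,\xi}(t,\gamma)\widehat{\mathcal{W}}^{nr}_\xi(t_1/t_2,\gamma_1/\gamma_2)=\widehat{\mathcal{W}}_\eta(t,\gamma^2)$ (via $\theta(y;q)=\theta(q/y;q)$), and your two-step plan then coincides with the paper's proof.
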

\begin{proof}
If the meromorphic function $f(x,z)$ in $(x,z)\in
\mathbb{C}^*\times\mathbb{C}^*$ 
satisfies the Askey-Wilson second-order $q$-difference equation
\[
\mathcal{D}f(\cdot,z)=(\widetilde{a}(z+z^{-1})-\widetilde{a}^2-1)f(\cdot,z)
\]
with respect to the multiplicity function $k^{nr}=(k,k,0,0)$ 
then 
\[
g(t):=C_{\eta,\xi}(t,\gamma)f\Bigl(\frac{t_1}{t_2},
\frac{\gamma_1}{\gamma_2}\Bigr)
\]
satisfies the 
$\textup{GL}_2$ Macdonald-Ruijsenaars $q$-difference equations
\begin{equation}\label{eq}
\bigl(D_{e_r}g\bigr)(t)=e_r(\gamma^{-2})g(t),\qquad r=1,2,
\end{equation}
cf. \cite{StQ}. 
Here we use that the prefactor $C_{\eta,\xi}(t,\gamma)$ satisfies
\[
C_{\eta,\xi}(\tau(-\epsilon_r)_qt,\gamma)=
\gamma_1^{-1}\gamma_2^{-1}C_{\eta,\xi}(t,\gamma)
\]
for $r=1,2$. Hence both sides of \eqref{relPhi} satisfy \eqref{eq}.
In addition, both sides of \eqref{relPhi} have 
an expansion of the form
\[
\widehat{\mathcal{W}}_\eta(t,\gamma^2)\sum_{r=0}^{\infty}\Xi_r\Bigl(\frac{t_2}
{t_1}\Bigr)^r,
\qquad \Xi_0=1
\]
for $|t_2/t_1|$ sufficiently small.
This forces the identity \eqref{relPhi}, cf. the proof of
\eqref{GLHC}.
\end{proof}
A similar statement 
is not true if the role of the basic Harish-Chandra series in Proposition
\ref{propr} is replaced by 
the associated basic hypergeometric functions. 
This follows from a comparison of the associated $c$-function expansions.

\begin{rema}
By \eqref{GLHC} and \eqref{AWHC}, formula \eqref{relPhi} is 
an identity expressing a very-well-poised ${}_8\phi_7$
basic hypergeometric series as a ${}_2\phi_1$ basic hypergeometric series.
After application of the transformation
formula \cite[(III.23)]{GR} to the very-well-poised ${}_8\phi_7$
series, this identity becomes a special case of \cite[(3.4.7)]{GR}.
\end{rema}

\vspace{1cm}
\noindent
{\bf Acknowledgment:} the author was partially supported by 
the Netherlands Organization for Scientific Research (NWO)
via the VIDI-grant ``Symmetry and modularity in
exactly solvable models''.



\end{document}